\newtheorem{thm}{Theorem}[section]
\newtheorem{cor}[thm]{Corollary}
\newtheorem{lem}[thm]{Lemma}
\newtheorem{prop}[thm]{Proposition}
\newtheorem{thmintro}{Theorem}
\theoremstyle{definition}
\newtheorem{ex}[thm]{Example}
\providecommand{\norm}[1]{\left\| #1 \right\|}
\newcommand{\enuma}[1]{\begin{enumerate}[\textup{(}a\textup{)}] {#1} \end{enumerate}}
\newcommand{\mb}{\mathbf}
\newcommand{\mh}{\mathbb}
\newcommand{\mr}{\mathrm}
\newcommand{\mc}{\mathcal}
\newcommand{\mf}{\mathfrak}
\newcommand{\ts}{\textstyle}
\newcommand{\isom}{\xrightarrow{\;\sim\;}}
\newcommand{\mosi}{\xleftarrow{\;\sim\;}}
\newcommand{\Z}{\mathbb Z}
\newcommand{\Q}{\mathbb Q}
\newcommand{\R}{\mathbb R}
\newcommand{\C}{\mathbb C}
\newcommand{\ep}{\epsilon}
\newcommand{\af}{\mr{aff}}
\newcommand{\un}{\mr{un}}
\newcommand{\rs}{\mr{rs}}
\newcommand{\inp}[2]{\langle #1 \,,\, #2 \rangle}
\newcommand{\matje}[4]{\left(\begin{smallmatrix} #1 & #2 \\ 
#3 & #4 \end{smallmatrix}\right)}
\def\Hom{{\rm Hom}}
\def\End{{\rm End}}
\def\Irr{{\rm Irr}}
\def\Mod{{\rm Mod}}
\def\ind{{\rm ind}}
\def\Aut{{\rm Aut}}
\begin{document}

\title{Topological K-theory of affine Hecke algebras}
\author[M. Solleveld]{Maarten Solleveld}
\address{IMAPP, Radboud Universiteit Nijmegen, Heyendaalseweg 135, 
6525AJ Nijmegen, the Netherlands}
\email{m.solleveld@science.ru.nl}
\date{\today}
\thanks{The author is supported by a NWO Vidi grant "A Hecke algebra approach to the 
local Langlands correspondence" (nr. 639.032.528).}
\subjclass[2010]{20C08, 46E80, 19L47}
\maketitle
\begin{abstract}
Let $\mc H (\mc R,q)$ be an affine Hecke algebra with a positive parameter function $q$.
We are interested in the topological K-theory of its $C^*$-completion $C_r^* (\mc R,q)$. 
We will prove that $K_* (C_r^* (\mc R,q))$ does not depend on the parameter $q$, solving a 
long-standing conjecture of Higson and Plymen. For this we use representation theoretic methods, 
in particular elliptic representations of Weyl groups and Hecke algebras.

Thus, for the computation of these K-groups it suffices to work out the case $q=1$. These 
algebras are considerably simpler than for $q \neq 1$, just crossed products of commutative algebras 
with finite Weyl groups. We explicitly determine $K_* (C_r^* (\mc R,q))$ for all classical 
root data $\mc R$. This will be useful to analyse the K-theory
of the reduced $C^*$-algebra of any classical $p$-adic group. 

For the computations in the case $q=1$ we study the more general situation of a finite group 
$\Gamma$ acting on a smooth manifold $M$. We develop a method to calculate the K-theory of the
crossed product $C(M) \rtimes \Gamma$. In contrast to the equivariant Chern character of
Baum and Connes, our method can also detect torsion elements in these K-groups.
\end{abstract}

\tableofcontents

\section*{Introduction}

Affine Hecke algebras can be realized in two completely different ways. On the one
hand, they are deformations of group algebras of affine Weyl groups, and on the 
other hand they appear as subalgebras of group algebras of reductive $p$-adic groups.
Via the second interpretation affine Hecke algebras (AHAs) have proven very useful
in the representation theory of such groups. This use is in no small
part due to their explicit construction in terms of root data, which makes them
amenable to concrete calculations. 

This paper is motivated by our desire to understand and compute the (topological)
K-theory of the reduced $C^*$-algebra $C_r^* (G)$ of a reductive $p$-adic group $G$.
This is clearly related to the representation theory of $G$. For instance, when $G$
is semisimple, every discrete series $G$-representation gives rise to a one-dimensional
direct summand in the K-theory of $C_r^* (G)$. 

The problem can be transferred to AHAs in the following way. By the Bernstein
decomposition, the Hecke algebra of $G$ can be written as a countable direct sum of
two-sided ideals:
\[
\mc H (G) = \bigoplus\nolimits_{\mf s \in \mf B (G)} \mc H (G)^{\mf s} .
\]
Borel \cite{Bor} and Iwahori--Matsumoto \cite{IwMa} have shown that
one particular summand, say $\mc H (G)^{IM}$, is Morita equivalent to an AHA, say
$\mc H (\mc R,q)^{IM}$. It is expected that all other summands $\mc H (G)^{\mf s}$ 
are also Morita equivalent to AHAs, or to closely related algebras. Indeed, this has 
been proven in many cases, see \cite[\S 2.4]{ABPS3} for an overview. 

The reduced $C^*$-algebra of $G$ is a completion of $\mc H (G)$, and it admits an
analogous Bernstein decomposition
\[
C_r^* (G) = \bigoplus\nolimits_{\mf s \in \mf B (G)} C_r^* (G)^{\mf s} , 
\]
where $C_r^* (G)^{\mf s}$ is the closure of $\mc H (G)^{\mf s}$ in $C_r^* (G)$.
By \cite{BHK} the Morita equivalence $\mc H (G)^{IM} \sim_M \mc H (\mc R,q)$
extends to a Morita equivalence between $C_r^* (G)^{IM}$ and the natural $C^*$-completion
of $\mc H (\mc R,q)^{IM}$. Again it can be expected that every summand $C_r^* (G)^{\mf s}$ 
is Morita equivalent to the $C^*$-completion $C_r^* (\mc R,q)^{\mf s}$ of some AHA 
$\mc H (\mc R,q)^{\mf s}$. However, this is currently not yet proven in several cases where 
the Morita equivalence is known on the algebraic level. We will return to this issue in a 
subsequent paper. Assuming it for the moment, we get 
\[
K_* (C_r^* (G)) \cong \bigoplus\nolimits_{\mf s \in \mf B (G)} K_* (C_r^* (\mc R,q)^{\mf s}) .
\]
The left hand side figures in the Baum--Connes conjecture for reductive $p$-adic groups 
\cite{BCH}. For applications to the Baum--Connes conjecture for algebraic groups over local fields, 
it would be useful to understand $K_* (C_r^* (G))$ better, in particular its torsion subgroup.
Namely, from the work of Kasparov \cite{Kas} it is known that for many groups $G$ the Baum--Connes
assembly map is injective, and that its image is a direct summand of $K_* (C_r^* (G))$. There exist
methods \cite[\S 3.4]{Sol2} which enable one to prove that the assembly map becomes an isomorphism
after tensoring its domain and range by $\Q$, but which say little about the torsion elements in
the K-groups. If one knew in advance that $K_* (C_r^* (G))$ is torsion-free, then one 
could prove instances of the Baum--Connes conjecture with such methods.

To construct an affine Hecke algebra, we use a root datum $\mc R$ in a lattice $X$. These 
give a Weyl group $W = W(\mc R)$ and an extended affine Weyl group $W^e = X \rtimes W$. 
As parameters we take a tuple of nonzero complex numbers $q = (q_i )_i$. The AHA 
$\mc H (\mc R,q)$ is a deformation of the group algebra $\C [W^e]$, in the following sense: 
as a vector space it is $\C [W^e]$, with a multiplication rule depending algebraically on $q$,
such that $\mc H (\mc R,1) = \C [W^e]$. See Paragraph \ref{par:AHA} for the precise definition.
To get a nice $C^*$-completion $C_r^* (\mc R,q)$, we must assume that $q$ is positive, that is, 
$q_i \in \R_{>0}$ for all $i$. For $q=1$ the $C^*$-completion can be described easily:
\[
C_r^* (\mc R,1) = C_r^* (W^e) = C(T_\un) \rtimes W , 
\]
where $T_\un = \Hom_\Z (X,S^1)$ is a compact torus.

All AHAs obtained from reductive $p$-adic groups $G$ have rather special parameters:
there are $n_i$ such that $q_i = p^{n_i}$, where $p$ is the characteristic of the local
nonarchimedean field underlying $G$. Thus the realization of AHAs via root data admits
more parameters than the realization as subalgebras of $\mc H (G)$. In particular the 
algebras $\mc H (\mc R,q)$ admit continuous parameter deformations, whereas the AHAs from
reductive $p$-adic groups do not, since the prime powers $p^{n/2}$ are discrete in $\R_{>0}$.

In fact, for fixed $\mc R$ the family $C_r^* (\mc R,q)$, with varying positive $q$, form
a continuous field of $C^*$-algebras. For a given $q \neq 1$ we have the half-line of
parameters $q^\epsilon = (q_i^\epsilon )_i$ with $\epsilon \in \R_{\geq 0}$. It is known from
\cite[Theorem 4.4.2]{SolAHA} that there exists a family of $C^*$-homomorphisms 
\[
\zeta_\epsilon : C_r^* (\mc R,q^\epsilon) \to C_r^* (\mc R,q) \qquad \epsilon \geq 0 , 
\]
such that $\zeta_\epsilon$ is an isomorphism for all $\epsilon > 0$ and depends continuously 
on $\epsilon \in \R_{\geq 0}$.
Via a general deformation principle, this yields a canonical homomorphism
\begin{equation}\label{eq:K}
K_* (C_r^* (W^e)) = K_* (C_r^* (\mc R,q^0)) \to K_* (C_r^* (\mc R,q)) . 
\end{equation}
Loosely speaking, the construction goes as follows. Take a projection $p_0$ (or a unitary
$u_0$) in a matrix algebra $M_n (C_r^* (W^e)) = M_n (C_r^* (\mc R,q^0))$. For $\epsilon > 0$
small, we can apply holomorphic functional calculus to $p_0$ to produce a new projection
$p_\epsilon \in M_n (C_r^* (\mc R,q^\epsilon))$ (or a unitary $u_\epsilon$). Then \eqref{eq:K}
sends $[p_0] \in K_0 (C_r^* (\mc R,q^0))$ (respectively $u_0 \in K_1 (C_r^* (\mc R,q^0))$) 
to the image of $p_\epsilon$ (respectively $u_\epsilon$) under the isomorphism 
$M_n (C_r^* (\mc R,q^\epsilon)) \cong M_n (C_r^* (\mc R,q))$. 

Actually, more is true, by \cite[Lemma 5.1.2]{SolAHA} the map $K_* (\zeta_0)$ equals \eqref{eq:K}.
Furthermore, by \cite[Theorem 5.1.4]{SolAHA} $\zeta_0$ induces an isomorphism
\[
K_* (C_r^* (\mc R,q^0)) \otimes_\Z \C \to K_* (C_r^* (\mc R,q)) \otimes_\Z \C . 
\]
In view of the aforementioned relation with the Baum--Connes conjecture for $p$-adic groups,
we also want to understand the torsion parts of these K-groups. We will prove:

\begin{thmintro}\label{thm:1} \textup{[see Theorem \ref{thm:2.2}]} \\
The map \eqref{eq:K} is a canonical isomorphism
\[
K_* (C_r^* (\mc R,1)) \to K_* (C_r^* (\mc R,q)) .
\]
\end{thmintro}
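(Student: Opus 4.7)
My plan is to upgrade the rational isomorphism of \cite[Theorem 5.1.4]{SolAHA} to an integral isomorphism by decomposing both $K_*(C_r^*(\mathcal R,1))$ and $K_*(C_r^*(\mathcal R,q))$ into manageable cuspidal/discrete-series blocks, matching these blocks across different positive parameters via the theory of elliptic representations, and tracking the deformation map $K_*(\zeta_0)$ block by block. The key principle is that parabolic induction already gives a parameter-independent skeleton, so all subtlety is concentrated in the discrete-series (``elliptic'') summands.

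First, I would invoke the Plancherel-type description of $C_r^*(\mathcal R,q)$, which, up to Morita equivalence, presents it as a finite direct sum of pieces indexed by associate classes of pairs $(\mathcal R_P,\delta)$ with $\mathcal R_P$ a parabolic sub-datum of $\mathcal R$ and $\delta$ a discrete series representation of the parabolic Hecke algebra $\mathcal H(\mathcal R_P,q)$. The analogous decomposition for $q=1$ is driven by the Mackey-style spectral decomposition of $C(T_\un) \rtimes W$ into pieces labelled by isotropy data; these play the role of discrete-series data at $q=1$. Since $\zeta_0$ intertwines parabolic induction at different parameters, it respects the induced K-theory decompositions, and Theorem \ref{thm:1} reduces to identifying the map \eqref{eq:K} on each block separately.

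Next, I would use the theory of elliptic tempered characters for affine Hecke algebras (Reeder, Opdam--Solleveld) to produce a canonical bijection between the discrete-series classes of $\mathcal H(\mathcal R_P,q)$ and those of $\mathcal H(\mathcal R_P,1) = \C[W^e_P]$, and to verify that this bijection is compatible with the continuous field $\{C_r^*(\mathcal R,q^\epsilon)\}_{\epsilon \ge 0}$. On each resulting block, both sides of \eqref{eq:K} reduce to the K-theory of an algebra of the form $M_n(C(T) \rtimes \Gamma)$ for a compact torus $T$ and a finite group $\Gamma$ acting through the little Weyl group, so the matching becomes a concrete statement about crossed-product K-theory that can be settled using the explicit description developed for $q=1$ in the body of the paper.

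The main obstacle I anticipate is precisely the compatibility of $K_*(\zeta_0)$ with the block decomposition and with the elliptic matching. Concretely, one must realise the relevant generators of each block as explicit projections and unitaries in $C_r^*(\mathcal R,1)$ constructed from elliptic characters, then propagate them along the continuous field by holomorphic functional calculus, and recognise the images as the corresponding elliptic generators on the $q$ side. Because the rational isomorphism is already available, any integral defect must be torsion; it therefore suffices to produce on each block enough explicit K-classes coming from elliptic data to account for the full group. Combining this elliptic-generation statement with the block-wise compatibility gives surjectivity and injectivity of \eqref{eq:K}.
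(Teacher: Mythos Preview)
Your strategy has a structural problem at the very first step. The Plancherel decomposition \eqref{eq:2.4} of $C_r^*(\mc R,q)$ for $q\neq 1$ is indexed by $\mc G$-orbits of pairs $(P,\delta)$ with $\delta$ a discrete series representation of $\mc H(\mc R_P,q_P)$, whereas for $q=1$ the algebra is $C(T_\un)\rtimes W$ and the analogous decomposition collapses: the only parabolic with discrete series is $P=\emptyset$, giving a single block. The index sets are genuinely different, and the homomorphism $\zeta_0$ does \emph{not} carry blocks to blocks. (Concretely, the image of $\zeta_0$ hits every summand of \eqref{eq:2.4}, while the source has only one.) So the sentence ``$\zeta_0$ intertwines parabolic induction \dots\ it respects the induced K-theory decompositions'' is not correct as stated, and the plan to reduce to a block-by-block comparison cannot get off the ground in this form. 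Your fallback---producing explicit elliptic projections/unitaries and propagating them through the continuous field---would in effect require you to reprove the representation-theoretic content of Theorem~\ref{thm:2} inside K-theory, and you give no mechanism for doing so beyond the rational statement you already have.

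The paper's route is different in kind. It does not attempt to match K-theoretic blocks at all. Instead it first establishes the purely representation-theoretic bijection $\zeta^\vee : R_\Z(C_r^*(\mc R,q)) \to R_\Z(C_r^*(W^e))$ of Theorem~\ref{thm:2} (this is where elliptic theory enters, via torsion-freeness of $\overline{R_\Z}(W)$ and the reduction to graded Hecke algebras). That bijection verifies a hypothesis in a pre-established lemma \cite[Lemma~5.1.5]{SolAHA}, whose internal machinery (filtrations coming from central characters and the continuous field $\{C_r^*(\mc R,q^\epsilon)\}_\epsilon$) then upgrades the rational isomorphism of \cite[Theorem~5.1.4]{SolAHA} to the integral statement $K_*(\zeta_0)$ bijective. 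In short: the elliptic matching you invoke is used, but at the level of Grothendieck groups of modules rather than K-theory, and the passage from $R_\Z$ to $K_*$ is handled by a general black box rather than by explicit K-classes.
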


This theorem was conjectured first by Higson and Plymen (see \cite[6.4]{Ply2} and \cite[6.21]{BCH}), 
at least when all parameters $q_i$ are equal. It is similar to the Connes--Kasparov 
conjecture for Lie groups, see \cite[Sections 4--6]{BCH} for more background. Independently Opdam 
\cite[Section 1.0.1]{Opd-Sp} conjectured Theorem \ref{thm:1} for unequal parameters. 

Unfortunately it is unclear how Theorem \ref{thm:1} could be proven by purely noncommutative
geometric means. The search for an appropriate technique was a major drive behind the 
author's PhD project (2002--2006), and partial results appeared already in his
PhD thesis \cite{SolThesis}. At that time, we hoped to derive representation consequences
from a K-theoretic proof of Theorem \ref{thm:1}. But so far, such a proof remains elusive.

In the meantime, substantial progress has been made in the representation theory of Hecke
algebras, see in particular \cite{OpSo1,CiOp1,CiOpTr}. This enables us to turn things around
(compared to 2004), now we can use representation theory to study the K-theory of 
$C_r^* (\mc R,q)$. 

Given an algebra or group $A$, let $\mr{Mod}_f (A)$ be the category of 
finite length $A$-modules, and let $R_\Z (A)$ be the Grothendieck group thereof. 
We deduce Theorem \ref{thm:1} from:

\begin{thmintro}\label{thm:2} \textup{[see Theorem \ref{thm:1.5}]} \\
The map $\mr{Mod}_f (C_r^* (\mc R,q)) \to \Mod (C_r^* (W^e)) : \pi \mapsto \pi \circ \zeta_0$
induces $\Z$-linear bijections
\begin{align*}
& R_\Z (C_r^* (\mc R,q)) \to R_\Z (C_r^* (W^e)) ,\\ 
& R_\Z (\mc H (\mc R,q)) \to R_\Z (W^e) .
\end{align*}
\end{thmintro}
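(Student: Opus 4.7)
The plan is to combine the parameter deformation of representations (from \cite{SolAHA}) with the invariance of the elliptic quotient of $R_\Z$ under parameter changes (the core of \cite{CiOpTr}), and to build the full bijection by induction on the semisimple rank using parabolic subalgebras. First I would fix natural bases: $R_\Z(\mc H(\mc R,q))$ is free abelian on irreducible finite-dimensional $\mc H(\mc R,q)$-modules, whereas $R_\Z(W^e)$ is described via Clifford theory for $X \rtimes W$, with irreducibles indexed by pairs $(Wt, \sigma)$ with $t \in T_\un$ and $\sigma \in \Irr(W_t)$. The tempered irreducibles on the $q$-side, giving a basis of $R_\Z(C_r^*(\mc R,q))$, are classified by Delorme--Opdam in parallel terms, with the Langlands quotients providing the non-tempered ones.

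Next I would analyze the map $\pi \mapsto \pi \circ \zeta_0$ using the continuous field of $C^*$-algebras. For $\epsilon > 0$ restriction along $\zeta_\epsilon$ is an equivalence of categories, so every irreducible tempered $\pi$ of $\mc H(\mc R,q)$ deforms to a continuous family $\pi_\epsilon$ of irreducible tempered modules of $\mc H(\mc R,q^\epsilon)$. At $\epsilon = 0$ the limit $\pi \circ \zeta_0$ is a (possibly reducible) $\C[W^e]$-module of the same dimension as $\pi$. This gives a candidate inverse map from $R_\Z(W^e)$ back to $R_\Z(\mc H(\mc R,q))$, and my key claim to establish is that sending $\pi \mapsto \pi \circ \zeta_0$ and this deformation are mutually inverse on Grothendieck groups.

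To prove bijectivity I would induct on the semisimple rank of $\mc R$. The base (elliptic) case is supplied by \cite{OpSo1, CiOp1, CiOpTr}: the scaling $q^\epsilon$ induces a canonical isomorphism between the elliptic quotients (i.e.\ the Grothendieck groups modulo the images of parabolic induction from all proper Levis) of $R_\Z(\mc H(\mc R,q))$ and of $R_\Z(W^e)$. For the inductive step one uses that parabolic induction intertwines the homomorphisms $\zeta_0$ attached to a Levi and to $\mc R$, so the bijections at lower rank lift to bijections on the subgroups generated by parabolically induced representations; combined with the elliptic comparison this yields bijectivity on $R_\Z(\mc H(\mc R,q))$. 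The $C^*$-version follows by restricting the statement to tempered modules, since temperedness is preserved by the parameter deformation and since $\Irr(C_r^*(\mc R,q))$ is precisely the tempered part of $\Irr(\mc H(\mc R,q))$.

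The main obstacle will be showing that the deformation-theoretic construction agrees with pullback along $\zeta_0$ at the non-semisimple point $\epsilon = 0$: for $\epsilon>0$ restriction preserves irreducibility, but at $\epsilon = 0$ the module $\pi \circ \zeta_0$ can split, and one must verify that the multiplicities recorded by the deformation of $\pi_\epsilon$ assemble coherently into a $\Z$-linear inverse of the elliptic bijection. This requires semi-continuity of irreducibility along the parameter, a Jantzen-type filtration argument to track composition factors through $\epsilon = 0$, and a careful comparison with R-group phenomena occurring on parabolically induced data; the compatibility of all these structures with $\zeta_0$, rather than with the abstract scaling of representations, is the technical heart of the argument.
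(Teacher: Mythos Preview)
Your overall architecture---induction on the semisimple rank, with the elliptic quotient as the base case and parabolic induction for the inductive step---matches the paper's strategy. But there is a genuine gap at the point you label the ``base (elliptic) case.'' You write that \cite{OpSo1,CiOp1,CiOpTr} supply a canonical isomorphism between the elliptic quotients of $R_\Z(\mc H(\mc R,q))$ and $R_\Z(W^e)$, where by elliptic quotient you mean $R_\Z$ modulo the span of parabolically induced modules. That is not what \cite{CiOpTr} proves. In \cite{CiOpTr} the group $\overline{R_\Z}$ is defined as the image of $R_\Z$ in the complex vector space $\overline{R_\C}$, i.e.\ it is the quotient-definition modulo its torsion subgroup. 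Their Proposition~5.6 therefore only gives an isomorphism after killing torsion on the Hecke side, and a priori $\overline{R_\Z}(W)$ (as a quotient) could itself have torsion. The paper closes this gap with a new result, Theorem~\ref{thm:1.6}: $\overline{R_\Z}(W)$ is torsion-free for every finite Weyl group, proved case-by-case via the Springer correspondence and Reeder's decomposition \eqref{eq:1.30}. Without this, the inductive step does not assemble into an integral bijection; you would only recover the rational statement already in \cite{SolAHA}.

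There are also two methodological differences worth noting. First, the paper does not argue directly with affine Hecke algebras: it reduces via Lusztig's comparison (Theorem~\ref{thm:1.4}) to graded Hecke algebras, proves the integral bijection there (Theorem~\ref{thm:1.2}) by the induction-on-rank/elliptic-quotient scheme, and only then deduces the affine statement. This bypasses entirely the deformation-theoretic analysis you describe. Second, your ``main obstacle''---tracking composition factors through $\epsilon=0$ via a Jantzen-type filtration and semi-continuity arguments---is not how the paper proceeds; once Theorem~\ref{thm:1.2} is in hand, the agreement of $\zeta^\vee$ with the restriction-to-$W$ map on tempered representations is essentially definitional (see the construction preceding Theorem~\ref{thm:1.5}), and the extension from tempered to all modules uses only the Langlands classification and its multiplicity-one property, not any limiting argument in $\epsilon$.
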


A substantial part of the proof of Theorem \ref{thm:2} boils down to representations of
the finite Weyl group $W$. Following Reeder \cite{Ree}, we study the group $\overline{R_\Z}(W)$
of elliptic representations, that is, $R_\Z (W)$ modulo the subgroup spanned by all representations
induced from proper parabolic subgroups of $W$. First we show that $\overline{R_\Z}(W)$ is always
torsion-free (Theorem \ref{thm:1.6}). Then we compare it with the analogous group of elliptic
representations of $\mc H (\mc R,q)$, which leads to Theorem \ref{thm:2}.\\

Having established the general framework, we set out to compute $K_* (C_r^* (\mc R,q))$
explicitly, for some root data $\mc R$ associated to well-known groups. By Theorem \ref{thm:1}, 
we only have to consider one $q$ for each $\mc R$. In most examples, the easiest is to take $q=1$.
Then we must determine 
\[
K_r (C_r^* (\mc R,1)) = K_* (C (T_\un) \rtimes W) \cong K^*_W (T_\un) ,
\]
where the right hand side denotes the $W$-equivariant K-theory of the compact Hausdorff 
space $T_\un$. Let $T_\un /\!/ W$ be the extended quotient. Of course, the equivariant
Chern character from \cite{BaCo} gives a natural isomorphism
\[
K^*_W (T_\un) \otimes_\Z \C \to H^* (T_\un /\!/ W ;\C) .
\]
But this does not suffice for our purposes, because we are particularly interested in the 
torsion subgroup of $K_W^* (T_\un)$.  Remarkably, that appears to be quite difficult to 
determine, already for cyclic groups acting on tori \cite{LaLu}.
Using equivariant cohomology, we develop a technique to facilitate the computation of 
$K_* (C(\Sigma) \rtimes W)$ for any finite group $W$ acting smoothly on a manifold $\Sigma$. 
With extra conditions it can be made more explicit:

\begin{thmintro}\label{thm:3} \textup{[see Theorem \ref{thm:2.3}]} \\
Suppose that every isotropy group $W_t \; (t \in \Sigma)$ is a Weyl group, and that
$H^* (\Sigma /\!/ W ; \Z)$ is torsion-free. Then 
\[
K_* (C(\Sigma) \rtimes W) \cong H^* (\Sigma /\!/ W ;\Z) . 
\]
\end{thmintro}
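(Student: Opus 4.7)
The plan is to upgrade the Baum--Connes rational equivariant Chern character
\[
K^*_W(\Sigma) \otimes_{\Z} \C \isom H^*(\Sigma /\!/ W; \C)
\]
to an integral isomorphism under the stated hypotheses. The idea is to stratify $\Sigma$ by $W$-isotropy type, decompose both sides accordingly, and then match the local pieces integrally by exploiting the representation theory of Weyl groups at each stratum. On the topological side, the relevant decomposition is
\[
\Sigma /\!/ W \;=\; \bigsqcup\nolimits_{[w]} \Sigma^w / Z_W(w) ,
\]
and the same stratification refines each $\Sigma^w / Z_W(w)$.

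First I would carry out the local analysis. Near an orbit $W \cdot t$, the slice theorem provides an invariant tubular neighborhood of the form $W \times_{W_t} V$ with $V$ a real $W_t$-representation, so the equivariant K-theory is locally governed by $K^*_{W_t}(V) \cong R(W_t)$ via a Thom isomorphism. The Weyl-group hypothesis on isotropy enters precisely here: for $W_t$ a Weyl group, $R(W_t)$ is a free abelian group admitting an elliptic decomposition indexed by the conjugacy classes of $W_t$, and this decomposition matches the local contribution of the fixed-point strata $\Sigma^w / Z_W(w)$ to $H^*(\Sigma /\!/ W; \Z)$. This step reuses the elliptic representation theory already developed in the proof of Theorem \ref{thm:2}.

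Next I would assemble these local identifications into a global one via a Mayer--Vietoris / Atiyah--Hirzebruch-type spectral sequence attached to the stratification of $\Sigma$ (presumably the very spectral sequence announced in the third paragraph before the theorem). Rationally this spectral sequence is known to degenerate at $E_2$ and converge to $H^*(\Sigma /\!/ W; \C)$. The torsion-freeness hypothesis on $H^*(\Sigma /\!/ W; \Z)$ is then used to force integral degeneration: any surviving differential or non-split extension would introduce torsion either in the abutment or in an adjacent page, contradicting the torsion-free target, and so all such obstructions must vanish. This gives the isomorphism of abelian groups asserted in the theorem.

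The main obstacle I expect is the gluing step. When one stratum sits in the closure of another, one must check that the elliptic decompositions of $R(W_t)$ and $R(W_{t'})$ (with $W_{t'} \subsetneq W_t$) are compatible with the normal-direction restriction maps induced by $W_{t'} \subset W_t$, so that the stratawise identifications fit together into a morphism of spectral sequences. This compatibility is essentially the functoriality of elliptic representations under parabolic restriction for Weyl groups, and establishing it integrally—rather than merely rationally, as in the classical equivariant Chern character—is the technical heart of the argument and the place where the Weyl-group hypothesis does its real work.
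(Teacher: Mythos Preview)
Your outline is close to the paper's argument in spirit, and you correctly locate the heart of the matter in the gluing step. Two points of comparison are worth making.

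First, the paper does not stratify by isotropy type and invoke slice/Thom arguments; instead it fixes a $W$-CW structure on $\Sigma$ and builds a single cellular spectral sequence (Theorem~\ref{thm:2.15}) with $E_1$-page $C^*(\Sigma/W;\mf L_u^W)$, where $\mf L_u(\sigma)=R_\Z(W_\sigma)$. This spectral sequence degenerates at $E_2$ automatically (it is concentrated in the row $q=0$), not because of any torsion hypothesis. The torsion-freeness of $H^*(\Sigma /\!/ W;\Z)$ enters only at the very end, to resolve the filtration extension problem between $K_*$ and its associated graded. So your sentence about torsion-freeness ``forcing integral degeneration'' misplaces where that hypothesis is actually used.

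Second, the gluing obstacle you flag is exactly the content of Theorem~\ref{thm:2.3}(a), and the paper's resolution is concrete: it uses the Springer-type basis of Proposition~\ref{prop:1.8}, namely the representations $H(u_w,\rho_w)$, and shows that inducing them from the generic isotropy group $W_i$ on each component $\Sigma^g_i$ up to the larger isotropy group $W_\tau$ of a boundary cell $\tau$ produces a coherent $\Z$-basis of $R_\Z(W_\tau)$ as $\tau$ varies. This splits the chain complex $(C_*(\Sigma/W;\mf L_u^W),\partial)$ as a direct sum over conjugacy classes $[g]$, with the $g$-th summand isomorphic to $C_*(\Sigma^g/Z_W(g);\Z)$. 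Note that the compatibility is via parabolic \emph{induction}, not restriction as you wrote, and the key input making it work integrally is the torsion-freeness of $\overline{R_\Z}(W)$ established in Theorem~\ref{thm:1.6}. Your identification of this as ``functoriality of elliptic representations'' is the right intuition; the paper's Proposition~\ref{prop:1.8} is precisely the sharp integral statement you would need to make your sketch go through.
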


We note that $H^* (\Sigma /\!/ W ; \Z)$ can be computed relatively easily. Theorem \ref{thm:3}
can be applied to all classical root data, and to some others as well. Let us summarise 
the outcome of our computations.

\begin{thmintro}\label{thm:4}
Let $\mc R$ be a root datum of type $GL_n, SL_n, PGL_n, SO_n, Sp_{2n}$ or $G_2$. Let 
$q$ be any positive parameter function for $\mc R$. Then $K_* (C_r^* (\mc R,q))$ is a 
free abelian group, whose rank is given explicitly in Section \ref{sec:exa}.
\end{thmintro}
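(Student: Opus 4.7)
The plan is to chain Theorems \ref{thm:1} and \ref{thm:3} together with an explicit cohomology computation. By Theorem \ref{thm:1}, $K_*(C_r^*(\mc R, q))$ is independent of $q$, so it suffices to compute
\[
K_* \bigl( C_r^*(\mc R, 1) \bigr) \;=\; K_* \bigl( C(T_\un) \rtimes W \bigr) \;\cong\; K_W^* (T_\un) .
\]
The task then splits in two: check the hypotheses of Theorem \ref{thm:3} for each of the listed root data, and compute the integral cohomology of the extended quotient $T_\un /\!/ W$.

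The isotropy hypothesis is the easy part. By a theorem of Steinberg, each isotropy group $W_t \; (t \in T_\un)$ is generated by the reflections it contains, so it is a reflection group. For the classical types $A, B, C, D$, every reflection subgroup of $W$ is a product of classical Weyl groups. For $G_2$ one can list the reflection subgroups of $W(G_2)$ directly (trivial, $A_1$ in its two $W$-orbits, $A_1 \times A_1$, $A_2$, and $G_2$); all of these are Weyl groups. Hence the first hypothesis of Theorem \ref{thm:3} always holds, and it remains to verify that $H^*(T_\un /\!/ W; \Z)$ is torsion-free.

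For this cohomology I would use the standard decomposition
\[
H^* (T_\un /\!/ W; \Z) \;\cong\; \bigoplus_{[w]} H^* \bigl( T_\un^w / Z_W(w); \Z \bigr) ,
\]
where $[w]$ runs over the conjugacy classes of $W$. For type $A$ these classes are parametrised by partitions and each $T_\un^w$ is a product of circles; the three root data $GL_n, SL_n, PGL_n$ differ only in the underlying lattice $X$, which affects the component group of $T_\un^w$ and the way $Z_W(w)$ permutes components, but in every case the summands are cohomologies of symmetric products of circles, which are classically torsion-free. Types $B, C, D$ are treated similarly via signed partitions, and for $G_2$ there are only six conjugacy classes, each of which can be handled by direct inspection.

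The main obstacle will be the integral, rather than merely rational, nature of the calculation. Rationally, $H^*(T_\un^w / Z_W(w); \Q)$ is just the $Z_W(w)$-invariants in $H^*(T_\un^w; \Q)$, so the ranks are readily extracted; integrally, one must rule out torsion coming from the finite-group quotient. The most efficient route is to realise each component of $T_\un^w / Z_W(w)$ explicitly as a symmetric product of circles or a quotient of a subtorus by translations, whose integral cohomology is known to be torsion-free, and then to check case by case that the residual cyclic action of $Z_W(w)/\langle w \rangle$ does not introduce torsion. Once this is established in every case, Theorem \ref{thm:3} gives $K_*(C_r^*(\mc R, q)) \cong H^*(T_\un /\!/ W; \Z)$, and the explicit ranks are obtained by summing Betti numbers over conjugacy classes, producing the tables displayed in Section \ref{sec:exa}.
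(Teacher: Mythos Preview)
Your plan works cleanly for $GL_n$, $SL_n$, $Sp_{2n}$, $SO_{2n+1}$ and $G_2$, and there it matches the paper's approach. The gap is in your invocation of Steinberg for \emph{all} the listed root data. Steinberg's connectedness theorem requires the derived group to be simply connected; it fails for $PGL_n$ and for $SO_{2n}$, and in both cases the isotropy groups $W_t$ are genuinely not Weyl groups.

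For $PGL_n$ the torus is $(\C^\times)^n/\C^\times$, and at a point of the form $(t_1,\zeta t_1,\ldots,\zeta^{r-1}t_1,\ldots)$ with $\zeta$ a primitive $r$-th root of unity the stabiliser is $(S_{\lambda_1}\times\cdots\times S_{\lambda_l})^r \rtimes \Z/r\Z$, where the cyclic factor acts by permuting the $r$ blocks; this is not generated by reflections. The paper therefore abandons the $q=1$ route entirely for $PGL_n$ and computes instead at generic $q\neq 1$, using the more general sheaf-theoretic machinery of Theorem~\ref{thm:2.15} rather than Theorem~\ref{thm:3}. The nontrivial R-groups $\Z/r\Z$ show up as a local coefficient system, and one has to decompose that sheaf and check torsion-freeness summand by summand.

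For $SO_{2n}$ the obstruction is similar: at a point with both $+1$'s and $-1$'s among its coordinates, the $W(D_n)$-stabiliser is $\bigl(W(B_{m_1})\times W(B_{m_2})\bigr)\cap W(D_n)$, which contains the element $s_{e_1}s_{e_{m_1+1}}$ not expressible as a product of reflections in $D_n$ lying in that subgroup. The paper calls such groups ``almost Weyl groups'' and devotes an appendix to extending Proposition~\ref{prop:1.8} (the basis of $R_\Z(W)$ by induced Springer representations) to them; only with that extension in hand does the argument of Theorem~\ref{thm:2.3} go through. So your sentence ``the first hypothesis of Theorem~\ref{thm:3} always holds'' is exactly where the proof breaks, and the repairs are not routine.
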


Whether or not torsion elements can pop up in $K_* (C_r^* (\mc R,q))$ for other root data
remains to be seen. In view of our results it does not seem very likely, but we do not have
a general principle to rule it out.

\section{Representation theory}

\subsection{Weyl groups} \
\label{par:Weyl}

In this first paragraph will show that the representation ring $R_\Z (W)$ of any finite Weyl
group $W$ is the direct sum of two parts: the subgroup spanned by representations induced
from proper parabolic subgroups, and an elliptic part $\overline{R_\Z} (W)$. We exhibit a 
$\Z$-basis of $\overline{R_\Z} (W)$ in terms of the Springer correspondence. These results 
rely mainly on case-by-case considerations in complex simple groups.

Let $\mf a$ be a finite dimensional real vector space and let $\mf a^*$ be its dual. 
Let $Y \subset \mf a$ be a lattice and 
$X = \mr{Hom}_\Z (Y,\Z) \subset \mf a^*$ the dual lattice. Let
\begin{equation}\label{eq:1.37}
\mc R = (X, R, Y ,R^\vee ,\Delta) .
\end{equation}
be a based root datum. Thus $R$ is a reduced root system in $X ,\, R^\vee \subset Y$ 
is the dual root system, $\Delta$ is a basis of $R$ and the set of positive roots is denoted $R^+$.
Furthermore we are given a bijection $R \to R^\vee ,\: \alpha \mapsto \alpha^\vee$ such 
that $\inp{\alpha}{\alpha^\vee} = 2$ and such that the corresponding reflections
$s_\alpha : X \to X$ (resp. $s^\vee_\alpha : Y \to Y$) stabilize $R$ (resp. $R^\vee$).
We do not assume that $R$ spans $\mf a^*$. The reflections $s_\alpha$ generate the Weyl group 
$W = W (R)$ of $R$, and 
$S_\Delta := \{ s_\alpha \mid \alpha \in \Delta \}$ is the collection of simple reflections. 

For a set of simple roots $P \subset \Delta$ we let $R_P$ be the root system they generate,
and we let $W_P = W(R_P)$ be the corresponding parabolic subgroup of $W$.

Let $R_\Z (W)$ be the Grothendieck group of the category of finite dimensional complex
$W$-representations, and write $R_\C (W) = \C \otimes_\Z R_\Z (W)$. For any $P \subset \Delta$
the induction functor $\ind_{W_P}^W$ gives linear maps $R_\Z (W_P) \to R_\Z (W)$ and
$R_\C (W_P) \to R_\C (W)$. In this subsection we are mainly interested in the abelian group of 
``elliptic $W$-representations"
\begin{equation} \label{eq:1.24}
\overline{R_\Z} (W) = R_\Z (W) \big/ 
\sum\nolimits_{P \subsetneq \Delta} \ind_{W_P}^W ( R_\Z (W_P) ) .
\end{equation}
In the literature \cite{Ree,CiOpTr} one more often encounters the vector space
\[
\overline{R_\C} (W) = R_\C (W) \big/ 
\sum\nolimits_{P \subsetneq \Delta} \ind_{W_P}^W ( R_\C (W_P) ) .
\]
Recall that an element $w \in W$ is called elliptic if it fixes only the zero element
of $\mr{Span}_\R (R)$, or equivalently if it does not belong to any proper parabolic
subgroup of $W$. It was shown in \cite[Proposition 2.2.2]{Ree} that $\overline{R_\C}(W)$
is naturally isomorphic to the space of all class functions on $W$ supported on elliptic
elements. In particular $\dim_\C \overline{R_\C} (W)$ is the number of elliptic
conjugacy classes in $W$.

In \cite{CiOpTr} $\overline{R_\Z}(W)$ is defined as the subgroup of $\overline{R_\C} (W)$
generated by the $W$-representations. So in that work it is by definition a lattice.
If $\overline{R_\Z} (W)$ (in our sense) is torsion-free, then it can be identified with 
the subgroup of $\overline{R_\C} (W)$ to which it is naturally mapped.  
For our purposes it will be essential to stick to the definition \eqref{eq:1.24} and to
use some results from \cite{CiOpTr}. Therefore we want to prove that \eqref{eq:1.24}
is always a torsion-free group.

In the analysis we will make ample use of Springer's construction of representations of
Weyl groups, and of Reeder's results \cite{Ree}. Let $G$ be a connected
reductive complex group with a maximal torus $T$, such that $R \cong R (G,T)$
and $W \cong W(G,T)$. For $u \in G$ let $\mc B^u = \mc B_G^u$ be the complex variety of Borel 
subgroups of $G$ containing $u$. The group $Z_G (u)$ acts on $\mc B^u$ by conjugation, and that
induces an action of $A_G (u) := \pi_0 (Z_G (u) / Z(G))$ on the cohomology of $\mc B^u$.
For a pair $(u,\rho)$ with $u \in G$ unipotent and $\rho \in \Irr (A_G (u))$ we define
\begin{equation}\label{eq:1.47}
\begin{aligned}
& H (u,\rho) = \Hom_{A_G (u)} \big( \rho, H^* (\mc B^u ;\C) \big) , \\
& \pi (u,\rho) = \Hom_{A_G (u)} \big( \rho, H^{\mr{top}} (\mc B^u ;\C) \big) , 
\end{aligned}
\end{equation}
where top indicates the highest dimension in which the cohomology is nonzero, namely
the dimension of $\mc B^u$ as a real variety. Let us call $\rho$ geometric if 
$\pi (u,\rho) \neq 0$. Springer \cite{Spr} proved that 
\begin{itemize}
\item $W \times A_G (u)$ acts naturally on $H^i (\mc B^u ;\C)$, for each $i \in \Z_{\geq 0}$,
\item $\pi (u,\rho)$ is an irreducible $W$-representation whenever it is nonzero, 
\item this gives a bijection between $\Irr (W)$ and the $G$-conjugacy classes of pairs 
$(u,\rho)$ with $u \in G$ unipotent and $\rho \in \Irr (A_G (u))$ geometric.
\end{itemize}
It follows from a result of Borho and MacPherson \cite{BoMa} that the $W$-representations 
$H (u,\rho)$, parametrized by the same data $(u,\rho)$, also form a basis of $R_\Z (W)$, see 
\cite[Lemma 3.3.1]{Ree}. Moreover $\pi(u,\rho)$ appears with multiplicity one in $H(u,\rho)$.

\begin{ex}\label{ex:1.9}
\begin{itemize}
\item Type $A$. Only the $n$-cycles in $W = S_n$ are elliptic, and they form one
conjugacy class. The only quasidistinguished unipotent class in $GL_n (\C)$ is
the regular unipotent class. Then $A_{GL_n (\C)}(u_{\mr{reg}}) = 1$ for every
regular unipotent element $u_{\mr{reg}}$ and $H(u_{\mr{reg}},\mr{triv}) = 
H^0 (\mc B^{u_{\mr{reg}}};\C)$ is the sign representation of $S_n$ (with our
convention for the Springer correspondence).
\item Types $B$ and $C$. The elliptic classes in $W(B_n) = W(C_n) \cong S_n \rtimes (\Z / 2\Z)^n$
are parametrized by partitions of $n$. We will write them down explicitly
as $\sigma (\emptyset,\lambda)$ with $\lambda \vdash n$ in \eqref{eq:6.52}.
\item Type $D$. The elliptic classes in $W(D_n) = S_n \rtimes (\Z / 2\Z)^n_{ev}$
are precisely the elliptic classes of $W(B_n)$ that are contained in $W(D_n)$.
They can be parametrized by partitions $\lambda \vdash n$ such that $\lambda$ has
an even number of terms.
\item Type $G_2$. There are three elliptic classes in $W(G_2) = D_6$: the rotations of order 
two, of order three and of order six. The quasidistinguished unipotent classes in
$G_2 (\C)$ are the regular and the subregular class. 

We have $A_G (u_{\mr{reg}}) = 1$ and $H(u_{\mr{reg}},\mr{triv}) = \pi(u_{\mr{reg}},\mr{triv})$ 
is the sign re\-pre\-sentation of $D_6$.
For $u$ subregular $A_G (u) \cong S_3$, and the sign representation of $A_G (u)$ is not geometric.
For $\rho$ the two-dimensional irreducible representation of $A_G (u)$, 
$\pi (u,\rho) = H(u,\rho)$ is the character of $W(G_2)$ which is 1 on the reflections for 
long roots and $-1$ on the reflections for short roots.
Furthermore $\pi (u,\mr{triv})$ is the standard two-dimensional representation of $D_6$ and
$H(u,\mr{triv})$ is the direct sum of $\pi (u,\mr{triv})$ and the sign representation.
\end{itemize}
\end{ex}

For a subset $P \subset \Delta$ let $G_P$ be the standard Levi subgroup of $G$ generated 
by $T$ and the root subgroups for roots $\alpha \in R_P$. The irreducible representations
of $W_P = W(G_P,T)$ are parametrized by $G_P$-conjugacy classes of pairs $(u_P,\rho_P)$
with $u_P \in G_P$ unipotent and $\rho_P \in \Irr (A_{G_P}(u_P))$ geometric, and the 
$W_P$-representations $H_P (u_P,\rho_P)$ form another basis of $R_\Z (W_P$.

Recall from \cite[\S 3.2]{Ree} that $A_{G_P}(u_P)$ can be regarded as a subgroup 
of $A_G (u_P)$. By \cite[Proposition 2.5 and 6.2]{Kat} 
\begin{equation}\label{eq:1.35}
\ind_{W_P}^W \big( H^i (\mc B_{G_P}^{u_P} ;\C) \big) \cong
H^i (\mc B^{u_P};\C) \qquad \text{as } W \times A_G (u_P)\text{-representations.}
\end{equation}
It follows that for any $(u_P,\rho_P)$ as above there are natural isomorphisms
\begin{multline}\label{eq:1.25}
\ind_{W_P}^W \big( H_P (u_P,\rho_P) \big) \cong \Hom_{A_{G_P}(u_P)} \big( \rho_P,  
H^* (\mc B^{u_P};\C)  \big) \\
\cong \bigoplus\nolimits_{\rho \in \Irr (A_G (u_P))}
\Hom_{A_{G_P}(u_P)} (\rho_P,\rho) \otimes H (u_P,\rho) .
\end{multline}
For a unipotent conjugacy class $\mc C \subset G$ and $P \subset \Delta$, let 
$R_\Z (W_P,\mc C)$ be the subgroup of $R_\Z (W_P)$ generated by the $H_P (u_P,\rho_P)$
with $u_P \in G_P \cap \mc C$ and $\rho_P \in \Irr (A_{G_P}(u_P))$. (Notice that
$G_P \cap \mc C$ can consist of zero, one or more conjugacy classes.) In view of
\eqref{eq:1.25} we can define
\[
\overline{R_\Z}(W,\mc C) = R_\Z (W,\mc C) \big/ 
\sum\nolimits_{P \subsetneq \Delta} \ind_{W_P}^W ( R_\Z (W_P ,\mc C) ) . 
\]
We obtain a decomposition as in \cite[\S 3.3]{Ree}:
\begin{equation}\label{eq:1.26}
\overline{R_\Z}(W) = \bigoplus\nolimits_{\mc C} \overline{R_\Z}(W,\mc C) .
\end{equation}
Following Reeder \cite{Ree}, we also define elliptic representation theories for
the component groups $A_G (u)$. For $u,u_P \in \mc C$ the groups $A_G (u)$ and
$A_G (u_P)$ are isomorphic. In general the isomorphism is not natural, but it
is canonical up to inner automorphisms. This gives a natural isomorphism 
$R_\Z (A_G (u)) \cong R_\Z (A_G (u_P))$, which enables us to write
\begin{equation}\label{eq:1.27}
\overline{R_\Z}(A_G (u)) = R_\Z (A_G (u)) \big/ \sum_{P \subsetneq \Delta , u_P \in \mc C \cap G_P}
\ind_{A_{G_P}(u_P)}^{A_G (u_P)} \big( R_\Z (A_{G_P}(u_P)) \big) .
\end{equation}
For any $u_P, u'_P \in \mc C \cap G_P$ there is a natural isomorphism
\[
\ind_{A_{G_P}(u_P)}^{A_G (u_P)} \big( R_\Z (A_{G_P}(u_P)) \big) \cong
\ind_{A_{G_P}(u'_P)}^{A_G (u'_P)} \big( R_\Z (A_{G_P}(u'_P)) \big) ,
\]
so on the right hand side of \eqref{eq:1.27} it actually suffices to use only one $u_P$
whenever $\mc C \cap G_P$ is nonempty.

Let $R^\circ_\Z (A_G (u))$ be the subgroup of $R_\Z (A_G (u))$ generated by the geometric
irreducible $A_G (u)$-representations. By \cite[\S 10]{Ree} 
\[
\ind_{A_G (u_P)}^{A_G (u)} \big( R^\circ_\Z (A_{G_P} (u_P)) \big) \subset R^\circ_\Z (A_G (u)).
\]
Using this we can define
\[
\overline{R^\circ_\Z}(A_G (u)) = R^\circ_\Z (A_G (u)) \big/ \sum_{P \subsetneq \Delta , 
u_P \in \mc C \cap G_P} \ind_{A_{G_P}(u_P)}^{A_G (u_P)} \big( R^\circ_\Z (A_{G_P}(u_P)) \big) . 
\]
It follows from \eqref{eq:1.25} that every $\rho_P \in \Irr (A_{G_P} (u_P))$ which appears in 
$\rho$ is itself geometric. Hence the inclusions $R^\circ_\Z (A_{G_P}(u_P)) \to
R_\Z (A_{G_P}(u_P))$ induce an injection
\begin{equation}\label{eq:1.28}
\overline{R^\circ_\Z}(A_G (u)) \to \overline{R_\Z}(A_G (u)) . 
\end{equation}
By \cite[Proposition 3.4.1]{Ree} the maps $\rho_P \mapsto \Hom_{A_{G_P}(u_P)} \big( \rho_P,  
H^* (\mc B^{u_P};\C)$ for $P \subset \Delta$ induce a $\Z$-linear bijection
\begin{equation}\label{eq:1.29}
\overline{R^\circ_\Z}(A_G (u)) \to \overline{R_\Z}(W,\mc C) . 
\end{equation}
(In \cite{Ree} these groups are by definition subsets of complex vector spaces. But with
the above definitions Reeder's proof still applies.)  From \eqref{eq:1.26}, \eqref{eq:1.29}
and \eqref{eq:1.28} we obtain an injection
\begin{equation}\label{eq:1.30}
\overline{R_\Z}(W) \to \bigoplus\nolimits_u \overline{R_\Z}(A_G (u)) ,
\end{equation}
where $u$ runs over a set of representatives for the unipotent classes of $G$.

\begin{thm}\label{thm:1.6}
The group of elliptic representations $\overline{R_\Z}(W)$ is torsion-free. 
\end{thm}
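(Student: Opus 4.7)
The plan is to reduce Theorem \ref{thm:1.6} to a question about component groups of unipotent elements in $G$, then to the case of an irreducible root system, and to finish by a case-by-case check. First, combining the direct sum decomposition \eqref{eq:1.26} with the $\Z$-linear bijection \eqref{eq:1.29} identifies
\[
\overline{R_\Z}(W) \;\cong\; \bigoplus\nolimits_{u} \overline{R^\circ_\Z}(A_G(u)),
\]
where $u$ runs over representatives of unipotent conjugacy classes of $G$, so it suffices to show $\overline{R^\circ_\Z}(A_G(u))$ is torsion-free for every such $u$. Next, if $R = R_1 \sqcup R_2$ with $G = G_1 \times G_2$ and $u = (u_1, u_2)$, then the flag variety splits as $\mc B^u = \mc B^{u_1}_{G_1} \times \mc B^{u_2}_{G_2}$, hence $A_G(u) = A_{G_1}(u_1) \times A_{G_2}(u_2)$, and by the K\"unneth formula an outer tensor product $\rho_1 \boxtimes \rho_2$ is geometric iff both factors are. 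Induction from a proper parabolic of the product factors as a tensor product of inductions from parabolics of the factors (at least one of which is proper), giving $\overline{R^\circ_\Z}(A_G(u)) \cong \overline{R^\circ_\Z}(A_{G_1}(u_1)) \otimes_\Z \overline{R^\circ_\Z}(A_{G_2}(u_2))$. Since $\otimes_\Z$ preserves torsion-freeness, one may assume $R$ is irreducible.

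For each irreducible type I would then enumerate the unipotent classes and compute $\overline{R^\circ_\Z}(A_G(u))$ explicitly from the classification of component groups and the Springer correspondence. Type $A$ is trivial: only the regular class is relevant, $A_G(u_\mr{reg}) = 1$, and $\overline{R^\circ_\Z} = \Z$. For types $B, C, D$ the component groups are elementary abelian $2$-groups and the elliptic classes are indexed by partitions as in Example \ref{ex:1.9}; using the Springer correspondence one writes down the integer induction matrix between the geometric characters attached to $\mc C$ and those attached to $\mc C \cap G_P$ and verifies case by case that its cokernel is free abelian. For the exceptional types $G_2, F_4, E_6, E_7, E_8$ the component groups take values in $\{1, S_2, S_3, S_4, S_5\}$ and one performs a finite check using Carter's tables; $G_2$ is already carried out in Example \ref{ex:1.9}.

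The main obstacle is the case analysis in type $D_n$ and in the exceptional groups, where some component groups $A_G(u)$ are non-abelian and the Levi-induction pairings $A_{G_P}(u_P) \subset A_G(u)$ are genuinely nontrivial. The theorem amounts to the assertion that the integer induction matrix, restricted to geometric characters, has Smith normal form with all elementary divisors in $\{0, 1\}$. This is a numerical statement that seems to require direct bookkeeping of the Springer correspondence on each unipotent class, and I do not see a uniform way to avoid going through the cases.
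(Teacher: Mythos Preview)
Your overall strategy matches the paper's: reduce to component groups via \eqref{eq:1.26} and \eqref{eq:1.29}, then handle irreducible root systems case by case. The one substantive difference is that the paper does not stop at $\overline{R^\circ_\Z}(A_G(u))$ but uses the further injection \eqref{eq:1.28} (equivalently \eqref{eq:1.30}) to reduce to showing that $\overline{R_\Z}(A_G(u))$ is torsion-free. This larger quotient depends only on the abstract group $A_G(u)$ and the family of subgroups $A_{G_P}(u_P)$, not on which representations are geometric, so the Springer correspondence drops out of the computation entirely. That is exactly what dissolves your ``main obstacle'' in the exceptional types: for non-distinguished $u$ in types $A$ and $E,F,G$, Carter's tables give $A_G(u)\in\{1,S_2,S_3\}$ (the cases $S_4,S_5$ only arise for distinguished $u$, where $\overline{R_\Z}(A_G(u))=R_\Z(A_G(u))$ trivially), and one checks by hand that $R_\Z(S_2)$ and $R_\Z(S_3)$ modulo any sum of induced representation rings is torsion-free. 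No Smith normal forms of Springer induction matrices are needed. For the classical types $B,C,D$ the paper carries out essentially the Bala--Carter bookkeeping you outline, computing $\overline{R_\Z}(A_G(u))$ explicitly (it is either $0$ or $R_\Z(A)$ for an explicit elementary abelian $2$-group $A$). Your reduction to irreducible $R$ via a tensor decomposition of $\overline{R^\circ_\Z}(A_G(u))$ is correct but slightly more involved than the paper's, which simply invokes $\overline{R_\Z}(W_1\times W_2)\cong\overline{R_\Z}(W_1)\otimes_\Z\overline{R_\Z}(W_2)$ directly from \eqref{eq:1.24}.
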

\begin{proof}
If $W$ is a product of irreducible Weyl groups $W_i$, 
then it follows readily from \eqref{eq:1.24} that
\[
\overline{R_\Z}(W) = \bigotimes\nolimits_i \overline{R_\Z}(W_i) .
\]
Hence we may assume that $W = W (R)$ is irreducible. By \eqref{eq:1.30} it suffices to
show that each $\overline{R_\Z}(A_G (u))$ is torsion-free. If $u$ is distinguished, then
$\mc C \cap G_P = \emptyset$ for all $P \subsetneq \Delta$, and $\overline{R_\Z}(A_G (u)) =
R_\Z (A_G (u))$. That is certainly torsion-free, so we do not have to consider distinguished
unipotent $u$ anymore.

For root systems of type $A$ and of exceptional type, the tables of component groups in
\cite[\S 13.1]{Car2} show that $A_G (u)$ is isomorphic to $S_n$ with $n \leq 5$. Moreover
$S_4$ and $S_5$ only occur when $u$ is distinguished. For $A_G (u) \cong S_2$ and for
$A_G (u) \cong S_3$ one checks directly that $\overline{R_\Z}(A_G (u))$ is torsion-free,
by listing all subgroups of $A_G (u)$ and all irreducible representations thereof.

That leaves the root systems of type $B,C$ and $D$. As group of type $B_n$ we take
$G = SO_{2n+1}(\C)$. By the Bala--Carter classification, the unipotent classes $\mc C$
in $G$ are parametrized by pairs of partitions $(\alpha,\beta)$ such that 
$2 |\alpha| + |\beta| = 2n + 1$ and $\beta$ has only odd parts, all distinct. 
A typical $u \in \mc C$ is distinguished in the standard Levi subgroup
\[
G_{\alpha} := GL_{\alpha_1}(\C) \times \cdots \times GL_{\alpha_d}(\C) \times SO_{|\beta|}(\C). 
\]
The part of $u$ in $SO_{|\beta|}$ depends only on $\beta$, it has Jordan blocks of
sizes $\beta_1, \beta_2, \ldots$
Let $\alpha'$ be a partition consisting of a subset of the terms of $\alpha$, say
\begin{equation}\label{eq:1.45}
\alpha' = (n)^{m'_n} (n-1)^{m'_{n-1}} \cdots (1)^{m'_1} .
\end{equation}
Let $\alpha''$ be a partition of $|\alpha| - |\alpha'|$ obtained from the remaining
terms of $\alpha$ by repeatedly replacing some $\alpha_i, \alpha_j$ by $\alpha_i + \alpha_j$.
All the standard Levi subgroups of $G$ containing this $u$ are of the form $G_{\alpha''}$.
The $GL$-factors of $G_{\alpha''}$ do not contribute to $A_{G_{\alpha''}}(u)$.
The part $u'$ of $u$ in $SO_{2(n - |\alpha''|) + 1}(\C)$ is parametrized by $(\alpha',\beta)$
and the quotient of $Z_{SO_{2(n - |\alpha''|)+1}(\C)}(u')$ 
by its unipotent radical is isomorphic to
\begin{equation}\label{eq:1.40}
\prod_{i \text{ even}} Sp_{2 m'_i}(\C) \times \prod_{i \text{ odd, not in } \beta} O_{2m'_i}(\C)
\times S \Big( \prod_{i \text{ odd, in } \beta} O_{2m'_i + 1}(\C) \Big) ,
\end{equation}
where the $S$ indicates that we take the subgroup of elements of determinant 1.
From this one can deduce the component group:
\begin{equation}\label{eq:1.31}
A_{G_{\alpha''}}(u) \cong A_{Sp_{2(n - |\alpha''|)}(\C)}(u') \cong 
\prod_{i \text{ odd, not in } \beta, m'_i > 0} \Z / 2 \Z \times 
S \Big( \prod_{i \text{ in } \beta} \Z / 2 \Z \Big) .
\end{equation}
We see that if
\begin{itemize}
\item $\alpha$ has an even term, 
\item or $\alpha$ has an odd term with multiplicity $>1$, 
\item or $\alpha$ has an odd term which also appears in $\beta$,
\end{itemize}
then there is a standard Levi subgroup $G_{\alpha''} \subsetneq G$ with 
$A_{G_{\alpha''}}(u) \cong A_G (u)$, namely, with $\alpha''$ just that one term of $\alpha$.
In these cases $\overline{R_\Z}(A_G (u)) = 0$.

Suppose now that $\alpha$ has only distinct odd terms, and that none of those
appears in $\beta$. Then \eqref{eq:1.31} becomes
\[
A_{G}(u) \cong \prod_{i \text{ in } \alpha} \Z / 2\Z \times A
\qquad \text{where } A = S \Big( \prod_{i \text{ in } \beta} \Z / 2 \Z \Big) . 
\]
We get
\begin{multline} \label{eq:1.36}
\sum_{P \subsetneq \Delta , u_P \in \mc C \cap G_P} 
\ind_{A_{G_P}(u_P)}^{A_G (u_P)} \big( R_\Z (A_{G_P}(u_P)) \big)  \cong \\
\sum_{j \text{ in } \alpha} \ind^{A_G (u)}_{A_{G_{\alpha - (j)}}(u)}
R_\Z \Big( \prod_{i \text{ in } \alpha - (j)} \Z / 2 \Z \times A \Big) \cong \\
\sum_{j \text{ in } \alpha} \ind_{\{1\}}^{\Z / 2\Z} R_\Z (\{1\}) \otimes_\Z
R_\Z \Big( \prod_{i \text{ in } \alpha - (j)} \Z / 2 \Z \Big) \otimes_\Z R_\Z (A) .
\end{multline}
We conclude that $\overline{R_\Z}(A_G (u)) = R_\Z (A)$.

So $\overline{R_\Z}(A_G (u))$ is torsion free for all unipotent $u \in SO_{2n+1}(\C)$,
which settles the case $B_n$. The root systems of types $C_n$ and $D_n$ can be
handled in a completely analogous way, using the explicit descriptions in 
\cite[\S 13.1]{Car2}.
\end{proof}

For every $w \in W$ there exists (more or less by definition) a unique parabolic
subgroup $\tilde W \subset W$ such that $w$ is an elliptic element of $\tilde W$.
Let $\mc C (W)$ be the set of conjugacy classes of $W$.
For $P \subset \Delta$ let $\mc C_{P} (W)$ be the subset consisting
of those conjugacy classes that contain an elliptic element of $W_P$.
Let $\mc P (\Delta) / W$ be a set of representatives for the $W$-association
classes of subsets of $\Delta$. Since every parabolic subgroup is conjugate to
a standard one, for every conjugacy class $\mc C$ in $W$ there exists a unique 
$P \in \mc P (\Delta) / W$ such that $\mc C \in \mc C_{P}(W)$.

Recall from \cite[\S 3.3]{Ree} that a unipotent element $u \in G$ is called 
quasidistinguished if there exists a semisimple $t \in Z_G (u)$ such that $t u$
is not contained in any proper Levi subgroup of $G$.

\begin{prop}\label{prop:1.8}
For every $P \in \mc P (\Delta) / W$ there exists an injection from $\mc C_{P}(W)$ 
to the set of $G_P$-conjugacy classes of pairs $(u_P,\rho_P)$ with $u_P \in G_P$ 
quasidistinguished unipotent and $\rho_P \in \Irr (A_{G_P} (u_P))$ geometric, denoted 
$w \mapsto (u_{P,w},\rho_{P,w})$, such that:
\enuma{
\item $\{ H(u_w,\rho_w) : w \in \mc C_{\Delta}(W)\}$
is a $\Z$-basis of $\overline{R_\Z}(W)$.
\item The set
\[
\big\{ \ind_{W_P}^W \big( H_P (u_{P,w},\rho_{P,w}) \big) : 
P \in \mc P (\Delta ) / W, w \in \mc C_{P}(W) \big\}
\]
is a $\Z$-basis of $R_\Z (W)$.
}
\end{prop}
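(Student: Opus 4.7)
The plan is to prove part (a) using Reeder's machinery, then derive (b) by induction on $|\Delta|$ combined with a splitting argument enabled by Theorem \ref{thm:1.6}.

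For (a), I would combine the decomposition \eqref{eq:1.26} with Reeder's $\Z$-linear bijection \eqref{eq:1.29} to reduce the statement to producing, for each quasidistinguished unipotent class $\mc C \subset G$ with representative $u$, a canonical $\Z$-basis of $\overline{R^\circ_\Z}(A_G(u))$ indexed by the elliptic $W$-conjugacy classes naturally attached to $\mc C$. Reeder's Kazhdan--Lusztig-type correspondence furnishes an assignment $w \mapsto (u_w,\rho_w)$ from $\mc C_\Delta(W)$ into pairs with $u_w$ quasidistinguished and $\rho_w \in \Irr(A_G(u_w))$ geometric. Verifying that the images really form a $\Z$-basis (and not merely a $\C$-basis) of $\overline{R^\circ_\Z}(A_G(u_w))$ is a case-by-case inspection: types $A$ and the exceptional types are handled because $A_G(u)$ is an $S_n$ with small $n$, while types $B,C,D$ reduce to the explicit component group computation \eqref{eq:1.31} already carried out in the proof of Theorem \ref{thm:1.6}. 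Transporting back through \eqref{eq:1.29} and \eqref{eq:1.26} yields the basis $\{H(u_w,\rho_w) : w \in \mc C_\Delta(W)\}$ of $\overline{R_\Z}(W)$.

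For (b), I would induct on $|\Delta|$, the base case being trivial. Apply part (a) to each pair $(\mc R_P, W_P)$ with $P \subsetneq \Delta$ and use the inductive hypothesis to obtain a $\Z$-basis of each $R_\Z(W_P)$ indexed by pairs $(Q,v)$ with $Q \subseteq P$ up to $W_P$-association and $v \in \mc C_Q(W_P)$; applying $\ind_{W_P}^W$ transports this into the kernel of $R_\Z(W) \twoheadrightarrow \overline{R_\Z}(W)$. Since $\overline{R_\Z}(W)$ is free abelian by Theorem \ref{thm:1.6}, the defining short exact sequence
\[
0 \to \sum_{P \subsetneq \Delta} \ind_{W_P}^W R_\Z(W_P) \to R_\Z(W) \to \overline{R_\Z}(W) \to 0
\]
splits, so assembling a $\Z$-basis of $R_\Z(W)$ reduces to combining the basis from (a) (the $P = \Delta$ part) with inductively supplied bases of the induced subgroups. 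A cardinality check $\sum_{P \in \mc P(\Delta)/W} |\mc C_P(W)| = |\mc C(W)| = \mr{rk}\, R_\Z(W)$, valid by the partition of $\mc C(W)$ recalled just before the proposition, converts spanning into linear independence.

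The main obstacle is the reassembly step: I must translate between the indexing set $\mc C_\Delta(W_P)$ furnished by the inductive step and the set $\mc C_P(W)$ appearing in the statement. For this I use that an elliptic element lies in a unique parabolic subgroup up to $W$-conjugacy, which produces a natural bijection $\mc C_\Delta(W_P) \isom \mc C_P(W)$ once $P$ is fixed as a representative of its $W$-association class and forces two elliptic elements of $W_P$ that are $W$-conjugate to be already $W_P$-conjugate. The same unique-parabolic property rules out unwanted coincidences between $\ind_{W_P}^W H_P(u_{P,w},\rho_{P,w})$ and analogous elements arising from $P' \not\sim_W P$, since the conjugacy class $w$ pins down the $W$-association class of $P$. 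Together with the cardinality count, this delivers the required $\Z$-basis of $R_\Z(W)$ indexed by $(P,w)$ with $P \in \mc P(\Delta)/W$ and $w \in \mc C_P(W)$.
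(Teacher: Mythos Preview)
Your overall strategy for part (b) matches the paper's: induction on $|\Delta|$, combining the elliptic basis from (a) with the inductively obtained bases of the $R_\Z(W_P)$, and using the partition of $\mc C(W)$ into the $\mc C_P(W)$ for the cardinality count. The paper handles your ``reassembly'' concern slightly differently --- it fixes a setwise splitting of $N_G(T) \to W$ so that $W$-associate pairs $(P,w)$ and $(P',w')$ yield $G$-conjugate data $(u_{P,w},\rho_{P,w})$ and $(u_{P',w'},\rho_{P',w'})$, hence the same induced $W$-representation --- but this is essentially the same idea as your appeal to the unique-parabolic property.

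For part (a), however, you are working harder than necessary, and in a direction that is not clearly justified. You invoke a ``Kazhdan--Lusztig-type correspondence'' from Reeder to produce a \emph{specific} assignment $w \mapsto (u_w,\rho_w)$, and then propose a case-by-case check that the resulting $H(u_w,\rho_w)$ form a $\Z$-basis. But Reeder's paper does not supply a canonical bijection from elliptic conjugacy classes to such pairs in the form you need, and the case analysis is in any case superfluous. The paper's argument is much shorter: Theorem \ref{thm:1.6} says $\overline{R_\Z}(W)$ is free abelian, Reeder's \cite[Proposition 2.2.2]{Ree} gives its rank as $|\mc C_\Delta(W)|$, and \eqref{eq:1.29}--\eqref{eq:1.30} together with \cite[Proposition 3.4.1]{Ree} show that the images of the $H(u,\rho)$ with $u$ quasidistinguished and $\rho$ geometric span it. From any spanning set of a free abelian group one can extract a $\Z$-basis; the paper then simply labels that basis by $\mc C_\Delta(W)$ \emph{in an arbitrary way}. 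No canonical correspondence and no case-by-case verification are needed --- the injection $w \mapsto (u_w,\rho_w)$ in the statement is not asserted to be natural, only to exist.
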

\begin{proof}
(a) By \cite[Proposition 2.2.2]{Ree} the rank of $\overline{R_\Z}(W)$ is the 
number of elliptic conjugacy classes of $W$.
With Theorem \ref{thm:1.6} we find $\overline{R_\Z}(W) \cong \Z^{|\mc C_{\Delta}(W)|}$.
By \eqref{eq:1.30} and \eqref{eq:1.29} $\overline{R_\Z}(W)$ has a basis consisting
of representations of the form $H(u,\rho)$ with $\rho \in \Irr (A_G (u))$ geometric.
By \cite[Proposition 3.4.1]{Ree} we need only quasidistinguished unipotent $u$.
We choose such a set of pairs $(u,\rho)$, and we parametrize it in an arbitrary
way by $\mc C_{\Delta}(W)$. \\
(b) We prove this by induction on $|\Delta|$. For $|\Delta| = 0$ the statement is trivial.

Suppose now that $|\Delta| \geq 1$ and $\alpha \in \Delta$. By the induction hypothesis
we can find maps $w \mapsto (u_P,\rho_P)$ such that the set
\[
\big\{ \ind_{W_P}^{W_{\Delta \setminus \{\alpha\}}} \big( H_P (u_{P,w},\rho_{P,w}) \big) : 
P \in \mc P (\Delta \setminus \{\alpha\}) / W_{\Delta \setminus \{\alpha\}}, 
w \in \mc C_{P}(W_{\Delta \setminus \{\alpha\}}) \big\} 
\]
is a $\Z$-basis of $R_\Z (W_{\Delta \setminus \{\alpha\}})$. By means of any setwise
splitting of $N_G (T) \to W$ we can arrange that
$(u_{P,w},\rho_{P,w})$ and $(u_{P',w'},\rho_{P',w'})$ are $G$-conjugate whenever
$(P,w)$ and $(P',w')$ are $W$-associate. Then $(P,w)$ and $(P',w')$ give rise to the
same $W$-representation. Consequently
\[
\big\{ \ind_{W_P}^W \big( H_P (u_{P,w},\rho_{P,w}) \big) : 
P \in \mc P (\Delta ) / W, P \neq \Delta, w \in \mc C_{P}(W) \big\}
\]
is well-defined and has $|\mc C (W) \setminus \mc C_{\Delta}(W)|$ elements.
By the induction hypothesis it spans 
$\sum_{P \subsetneq \Delta} \ind_{W_P}^W \big( R_\Z (W_P) \big)$,
so it forms a $\Z$-basis thereof. Combine this with \eqref{eq:1.24} and part (a).
\end{proof}

\subsection{Graded Hecke algebras} \

We consider the Grothendieck group $R_\Z (\mh H)$ of finite length modules of a graded 
Hecke algebra $\mh H$ with parameters $k$. We show that it is the direct sum of the 
subgroup spanned by modules induced from proper parabolic subalgebras and an elliptic part
$\overline{R_\Z} (\mh H)$. We prove that $\overline{R_\Z} (\mh H)$ is isomorphic to the
elliptic part of the representation ring of the Weyl group associated to $\mh H$. By
Paragraph \ref{par:Weyl}, $\overline{R_\Z} (\mh H)$ is free abelian and does not depend 
on the parameters $k$. The main ingredients are the author's work \cite{SolHomGHA} on the 
periodic cyclic homology of graded Hecke algebras, and the study of discrete series 
representations by Ciubotaru, Opdam and Trapa \cite{CiOp2,CiOpTr}.

Graded Hecke algebras are also known as degenerate (affine) Hecke algebras. They were 
introduced by Lusztig in \cite{Lus-Gr}. In the notation from \eqref{eq:1.37} we call 
\begin{equation}
\tilde{\mc R} = (\mf a^* ,R, \mf a, R^\vee, \Delta )
\end{equation}
a degenerate root datum. We pick complex numbers $k_\alpha$ for $\alpha \in \Delta$,
such that $k_\alpha = k_\beta$ if $\alpha$ and $\beta$ are in the same $W$-orbit.
We put $\mf t = \C \otimes_\R \mf a$. 

The graded Hecke algebra associated to these data is the complex vector space
\[
\mh H = \mh H (\tilde{\mc R},k) = \mc O ( \mf t) \otimes \C [W] ,
\]
with multiplication defined by the following rules:
\begin{itemize}
\item $\mh C[W]$ and $\mc O (\mf t )$ are canonically embedded as subalgebras;
\item for $\xi \in \mf t^*$ and $s_\alpha \in S_\Delta$ we have the cross relation
\begin{equation}\label{eq:1.1}
\xi \cdot s_\alpha - s_\alpha \cdot s_\alpha (\xi) = 
k_\alpha \inp{\xi}{\alpha^\vee} .
\end{equation}
\end{itemize}
Notice that $\mh H (\tilde{\mc R},0) = \mc O (\mf t) \rtimes W$.

Multiplication with any $\ep \in \mh C^\times$ defines a bijection $\mf t^* \to \mf t^*$,
which clearly extends to an algebra automorphism of $\mc O (\mf t) = S(\mf t^* )$. 
From the cross relation
\eqref{eq:1.1} we see that it extends even further, to an algebra isomorphism
\begin{equation}\label{eq:1.3}
\mh H (\tilde{\mc R},\ep k) \to \mh H (\tilde{\mc R}, k)
\end{equation}
which is the identity on $\mh C[W]$. For $\ep = 0$ this map is well-defined, but
obviously not bijective.

For a set of simple roots $P \subset \Delta$ we write
\begin{equation}
\begin{array}{l@{\qquad}l}
R_P = \mh Q P \cap R & R_P^\vee = \mh Q R_P^\vee \cap R^\vee , \\
\mf a_P = \R P^\vee & \mf a^P = (\mf a^*_P )^\perp ,\\
\mf a^*_P = \R P & \mf a^{P*} = (\mf a_P )^\perp  ,\\
\tilde{\mc R}_P = ( \mf a_P^* ,R_P ,\mf a_P ,R_P^\vee ,P) & 
\tilde{\mc R}^P = (\mf a^*,R_P ,\mf a,R_P^\vee ,P) .
\end{array}
\end{equation}
Let $k_P$ be the restriction of $k$ to $R_P$. We call
\[
\mh H^P = \mh H (\tilde{\mc R}^P, k_P)
\]
a parabolic subalgebra of $\mh H$. It contains $\mh H_P = \mh H (\tilde{\mc R}_P,k_P)$
as a direct summand.

The centre of $\mh H (\tilde{\mc R},k)$ is $\mc O (\mf t)^W = \mc O (\mf t / W)$ 
\cite[Proposition 4.5]{Lus-Gr}. Hence the central character of an irreducible 
$\mh H (\tilde{\mc R},k)$-representation is an element of $\mf t / W$.

Let $(\pi,V)$ be an $\mh H (\tilde{\mc R},k)$-representation. We say that 
$\lambda \in \mf t$ is an $\mc O (\mf t)$-weight of $V$ (or of $\pi$) if
\[
\{ v \in V : \pi (\xi) v = \lambda (\xi) v \text{ for all } \xi \in \mf t^* \}
\]
is nonzero. Let Wt$(V) \subset \mf t$ be the set of $\mc O (t)$-weights of $V$.

Temperedness of a representation is defined via its $\mc O (\mf t)$-weights. We write
\begin{align*}
& \mf a^+ = \{ \mu \in \mf a : \inp{\alpha}{\mu} \geq 0 \: \forall \alpha \in \Delta \} , \\
& \mf a^{*+} :=  \{ x \in \mf a^* :  \inp{x}{\alpha^\vee} \geq 0 
\: \forall \alpha \in \Delta \} , \\
& \mf a^- = \{ \lambda \in \mf a : \inp{x}{\lambda} \leq 0 \: \forall x \in \mf a^{*+} \} = 
\big\{ \sum\nolimits_{\alpha \in \Delta} \lambda_\alpha \alpha^\vee : \lambda_\alpha \leq 0 \big\} .
\end{align*}
The interior $\mf a^{--}$ of $\mf a^-$ equals
$\big\{ {\ts \sum_{\alpha \in \Delta}} \lambda_\alpha \alpha^\vee : \lambda_\alpha < 0 \big\}$
if $\Delta$ spans $\mf a^*$, and is empty otherwise.

We regard $\mf t = \mf a \oplus i \mf a$ as the polar
decomposition of $\mf t$, with associated real part map $\Re : \mf t \to \mf a$.
By definition, a finite dimensional $\mh H (\tilde{\mc R},k)$-module $(\pi,V)$ is tempered
$\Re (\mr{Wt}(V)) \subset \mf a^{-}$. More restrictively, we say that $(\pi,V)$ belongs to
the discrete series if $\Re (\mr{Wt}(V)) \subset \mf a^{--}$.

We are interested in the restriction map
\[
\begin{array}{cccc}
\mr{r} : & \Mod (\mh H (\tilde{\mc R},k)) & \to & \Mod (\C [W]) , \\
& V & \mapsto & V |_W .
\end{array}
\]
We can also regard it as the composition of representations with the algebra homomorphism
\eqref{eq:1.3} for $\ep = 0$, then its image consists of 
$\mc O (\mf t) \rtimes W$-representations on which $\mc O (\mf t)$ acts via $0 \in \mf t$.

Let $\Irr_0 (\mh H)$ be the set of irreducible tempered $\mh H (\tilde{\mc R},k)$-modules
with central character in $\mf a / W$. It is known from 
\cite[Theorem 6.5]{SolHomGHA} that, for real-valued $k$, r induces a bijection
\begin{equation}\label{eq:1.2}
\mr{r}_\C : \C \, \Irr_0 (\mh H (\tilde{\mc R},k)) \to R_\C (W) .
\end{equation}
Using work of Lusztig, Ciubotaru \cite[Corollary 3.6]{Ciu} showed that, for parameters
of ``geometric" type,
\begin{equation}\label{eq:1.5}
\mr{r}_\Z : \Z \, \Irr_0 (\mh H (\tilde{\mc R},k)) \to R_\Z (W) \text{ is bijective.}
\end{equation}
We will generalize this to arbitrary real parameters. (Parameters $k$ of geometric type 
need not be real-valued, but via \eqref{eq:1.3} they can be reduced to that.)

We recall some notions from \cite{CiOp1}. Let $R_\Z (\mh H (\tilde{\mc R},k))$ be the
Grothendieck group of (the category of) finite dimensional $\mh H (\tilde{\mc R},k)$-modules.
For any parabolic subalgebra $\mh H^P = \mh H (\tilde{\mc R}^P,k_P)$ the induction functor
$\ind_{\mh H^P}^{\mh H}$ induces a map $R_\Z (\mh H^P) \to R_\Z (\mh H)$.
If the $\mc O (\mf t)$-weights of $V \in \Mod (\mh H^P)$ are contained in some $U \subset \mf t$, 
then by \cite[Theorem 6.4]{BaMo} the $\mc O (\mf t)$-weights of $\ind_{\mh H^P}^{\mh H} V$
are contained in $W^P U$, where $W^P$ is the set of shortest length representatives of
$W / W_P$. This implies that $\ind_{\mh H^P}^{\mh H}$ preserves temperedness 
\cite[Corollary 6.5]{BaMo} and central characters. In particular it induces a map
\begin{equation}\label{eq:1.9}
\ind_{\mh H^P}^{\mh H} : \Z \, \Irr_0 (\mh H^P) \to \Z \, \Irr_0 (\mh H). 
\end{equation}
Many arguments in this section make use of the group of ``elliptic $\mh H$-representations"
\begin{equation}\label{eq:1.4}
\overline{R_\Z} (\mh H) = R_\Z (\mh H (\tilde{\mc R},k)) \big/
\sum\nolimits_{P \subsetneq \Delta} \ind_{\mh H^P}^{\mh H} \big( R_\Z (\mh H^P) \big) .
\end{equation}
Since $\mh H (\tilde{\mc R},k) = \mc O (\mf t) \otimes \C [W]$ as vector spaces, 
\begin{equation}\label{eq:1.34}
\mr{r} \circ \ind_{\mh H^P}^{\mh H} = \ind_{W_P}^W \circ \mr{r}^P,
\end{equation}
where $\mr{r}^P$ denotes the analogue of r for $\mh H^P$. Hence r induces a $\Z$-linear map
\begin{equation}\label{eq:1.6}
\bar{\mr{r}} : \overline{R_\Z} (\mh H (\tilde{\mc R},k)) \to \overline{R_\Z} (W) . 
\end{equation}

\begin{prop}\label{prop:1.7}
The map \eqref{eq:1.6} is surjective, and its kernel is the torsion subgroup of
$\overline{R_\Z} (\mh H (\tilde{\mc R},k))$.
\end{prop}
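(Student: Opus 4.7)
The plan is to show that the $\C$-linear extension $\bar{\mr r} \otimes_\Z \C$ is an isomorphism and then use Theorem \ref{thm:1.6} to transfer this to the integral statement.

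First I would verify that $\bar{\mr r}_\C : \overline{R_\C}(\mh H (\tilde{\mc R},k)) \to \overline{R_\C}(W)$ is a $\C$-linear isomorphism. The Langlands classification together with the Delorme--Opdam structure theorem for tempered $\mh H$-modules guarantees that every element of $R_\C (\mh H)$ lies in $\C\,\Irr_0 (\mh H) + \sum_{P \subsetneq \Delta} \ind_{\mh H^P}^{\mh H} R_\C (\mh H^P)$: any irreducible is a subquotient of a standard module $\ind_{\mh H^P}^{\mh H}(\delta \otimes \C_\nu)$, while every non-discrete tempered module arises by full induction from a proper parabolic subalgebra. Combined with \eqref{eq:1.34}, the bijection \eqref{eq:1.2} of \cite[Theorem 6.5]{SolHomGHA} then forces $\bar{\mr r}_\C$ to be surjective; bijectivity follows from the dimension count, since $\dim \overline{R_\C}(W)$ equals the number of elliptic conjugacy classes of $W$ (by \cite[Proposition 2.2.2]{Ree}), and the same number bounds the rank of the source.

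Next I would deduce both assertions from the commutative square
\[
\begin{array}{ccc}
\overline{R_\Z}(\mh H (\tilde{\mc R},k)) & \xrightarrow{\;\bar{\mr r}\;} & \overline{R_\Z}(W) \\
\downarrow & & \downarrow \\
\overline{R_\C}(\mh H (\tilde{\mc R},k)) & \xrightarrow[\;\sim\;]{\bar{\mr r}_\C} & \overline{R_\C}(W)
\end{array}
\]
By Theorem \ref{thm:1.6} the target $\overline{R_\Z}(W)$ is torsion-free, so the right vertical arrow is injective. The main theorem of \cite{CiOpTr} identifies the image of the diagonal composition with the $\Z$-subgroup of $\overline{R_\C}(W)$ generated by the classes of genuine $W$-representations; in view of Theorem \ref{thm:1.6} this subgroup is precisely the image of $\overline{R_\Z}(W)$. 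Consequently $\bar{\mr r}$ surjects onto $\overline{R_\Z}(W)$. For the kernel, since the bottom and the right arrows are both injective, $\ker (\bar{\mr r})$ coincides with the kernel of the left vertical map, which is the torsion subgroup of $\overline{R_\Z}(\mh H (\tilde{\mc R},k))$.

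The main obstacle is the first step — establishing that $\bar{\mr r}_\C$ is an isomorphism. Surjectivity of $\mr{r}_\C$ before passing to elliptic quotients is essentially \eqref{eq:1.2}; what requires care is controlling irreducible modules with non-real central character and non-tempered standard modules, showing that modulo the parabolically induced subspace they contribute nothing. This is where the Langlands classification for graded Hecke algebras and the compatibility of the elliptic pairing under restriction, established in \cite{CiOpTr}, are essential.
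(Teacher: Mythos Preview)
Your approach is essentially the same as the paper's: both hinge on Theorem \ref{thm:1.6} (to identify $\overline{R_\Z}(W)$ with its image in $\overline{R_\C}(W)$) and on \cite[Proposition 5.6]{CiOpTr}. The paper's argument is more direct than yours: it simply observes that the group denoted $\overline{R'_\Z}(\mh H(\tilde{\mc R},k))$ in \cite{CiOpTr} is by construction $\overline{R_\Z}(\mh H(\tilde{\mc R},k))$ modulo its torsion subgroup, and then \cite[Proposition 5.6]{CiOpTr} gives the bijection $\overline{R'_\Z}(\mh H(\tilde{\mc R},k)) \to \overline{R_\Z}(W)$ outright, from which surjectivity and the identification of the kernel follow immediately. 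Your first step (proving that $\bar{\mr r}_\C$ is an isomorphism) is therefore not needed once you invoke \cite{CiOpTr} in full strength; in particular the dimension bound ``the same number bounds the rank of the source'' would require more justification than you give, but becomes irrelevant. One point you omit and the paper addresses: \cite[Proposition 5.6]{CiOpTr} leaves open the case of $F_4$ with non-generic parameters, and the paper fills this gap by invoking the limit argument from \cite[\S 5.1]{CiOpTr} together with \cite[\S 3.2]{CiOp2}.
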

\begin{proof}
By Theorem \ref{thm:1.6} $\overline{R_\Z} (W)$ is torsion-free, so it can be identified
with its image in $\overline{R_\C} (W)$. This means that our definition of $\overline{R_\Z} (W)$
agrees with that in \cite{CiOpTr}. Likewise, in \cite{CiOpTr} the subgroup 
$\overline{R'_\Z} (\mh H (\tilde{\mc R},k))$ of $\overline{R_\C} (\mh H (\tilde{\mc R},k))$ 
generated by the actual representations is considered. In other words, 
$\overline{R'_\Z} (\mh H (\tilde{\mc R},k))$ is defined as the quotient
of $\overline{R_\Z} (\mh H (\tilde{\mc R},k))$ by its torsion subgroup.

By \cite[Proposition 5.6]{CiOpTr} the map 
\begin{equation}\label{eq:1.32}
\overline{\mr r} : \overline{R'_\Z} (\mh H (\tilde{\mc R},k)) \to \overline{R_\Z} (W) 
\end{equation}
is bijective, except possibly when $R$ has type $F_4$ and $k$ is not a generic parameter. 
However, in view of the more recent work \cite[\S 3.2]{CiOp2}, the limit argument given 
(for types $B_n$ and $G_2$) in \cite[\S 5.1]{CiOpTr} also applies to $F_4$. Thus \eqref{eq:1.32} 
is bijective for all $\tilde{\mc R}$ and all real-valued parameters $k$.
\end{proof}

\begin{lem}\label{lem:1.1}
Let $k$ be real-valued. The canonical map
\[
\Z \, \Irr_0 (\mh H (\tilde{\mc R},k)) \to \overline{R_\Z} (\mh H (\tilde{\mc R},k)) 
\]
is surjective.
\end{lem}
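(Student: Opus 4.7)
The plan is to reduce every irreducible $\mh H$-module to a $\Z$-combination of $\Irr_0$-classes modulo the induction subgroup $I = \sum_{P\subsetneq\Delta} \ind_{\mh H^P}^{\mh H}(R_\Z(\mh H^P))$. I would proceed in two reductions.

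The first reduction uses the Langlands classification for graded Hecke algebras. Every irreducible $V$ is the unique Langlands quotient of a standard module $\pi(P,\sigma,\nu) = \ind_{\mh H^P}^{\mh H}(\sigma \otimes \C_\nu)$, where $P \subseteq \Delta$, $\sigma$ is tempered on $\mh H_P$, and $\nu \in (\mf a^P)^*$ is strictly dominant. When $P \subsetneq \Delta$, the standard module lies in $I$, so in $\overline{R_\Z}(\mh H)$ the class $[V]$ equals a signed $\Z$-combination of the classes of the other irreducible composition factors $[L_j]$, each a Langlands quotient with strictly smaller parameter. Induction on this partial order reduces the problem to the case that $V$ is tempered.

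The second reduction treats tempered $V$ with central character $\lambda + i\mu$, $\lambda \in \mf a^-$, $\mu \in \mf a$. When $\mu = 0$, $V$ belongs to $\Irr_0$ by definition. When $\mu \neq 0$, the Delorme--Opdam description of the tempered dual (cf.\ \cite{Opd-Sp,OpSo1}) realizes $V$ as a direct summand of an induced module $\pi_\mu = \ind_{\mh H^P}^{\mh H}(\delta \otimes \C_{i\mu})$ for some proper $P \subsetneq \Delta$ and discrete series $\delta$ of $\mh H_P$, with $i\mu \in i\mf a^P$. Comparing with the analogous induced module $\pi_0 = \ind_{\mh H^P}^{\mh H}(\delta)$, whose summands $V_\sigma^0$ have real central character $\lambda$ and hence lie in $\Irr_0$, and exploiting the $R$-group inclusion $R(\delta,\mu) \hookrightarrow R(\delta,0)$ together with Clifford-theoretic restriction of characters, each individual summand $[V_\rho^\mu]$ of $\pi_\mu$ can be expressed as a $\Z$-combination of the $[V_\sigma^0]$ modulo $I$. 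Since the $V_\sigma^0$ belong to $\Irr_0$, this places $[V]$ in the image of $\Z\,\Irr_0$.

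The main obstacle is the $R$-group analysis in the second reduction: extracting the individual summand class $[V_\rho^\mu]$ from the aggregate relation $\sum_\rho m_\rho [V_\rho^\mu] \in I$ that arises from $[\pi_\mu] \in I$. Resolving this requires a careful study of the parabolic intertwining algebra in the graded Hecke algebra setting, drawing on Opdam's Plancherel-type analysis \cite{Opd-Sp} and its integral elliptic refinement in \cite{CiOp2,CiOpTr}. Once this step is carried out, combining it with the Langlands reduction of the first step yields the desired surjectivity.
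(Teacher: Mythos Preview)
Your first reduction via the Langlands classification matches the paper's argument. The gap is in the second reduction: you set up an R-group comparison between $\pi_\mu$ and $\pi_0$ in order to express each summand $[V_\rho^\mu]$ in terms of the $[V_\sigma^0]$ modulo $I$, and you correctly identify that isolating an individual summand is the obstacle. But this comparison is unnecessary, and the paper avoids it entirely.

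The key observation you are missing is that when $\nu \neq 0$ (your $i\mu$), the stabiliser of the induction datum $(P,\delta,\nu)$ inside $W$ is contained in the isotropy group $W_\nu$, which is a \emph{proper} parabolic subgroup $W_Q$ with $P \subset Q \subsetneq \Delta$. Since the decomposition of $\pi(P,\delta,\nu)$ into irreducibles is governed by intertwining operators indexed by this stabiliser (see \cite[Proposition~8.3]{SolGHA}), every irreducible summand $\pi$ of $\pi(P,\delta,\nu)$ is already of the form $\ind_{\mh H^Q}^{\mh H}(\pi^Q)$ for some irreducible $\pi^Q$ of $\mh H^Q$. Hence $[\pi] \in I$ and $[\pi]=0$ in $\overline{R_\Z}(\mh H)$ outright. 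There is nothing to express in terms of $\Irr_0$: the tempered irreducibles with nonreal central character simply vanish in the elliptic quotient. What remains are the summands of $\pi(P,\delta,0)$, and since $k$ is real the discrete series $\delta$ has real central character, so these summands lie in $\Irr_0(\mh H)$. This closes the argument without any R-group bookkeeping.
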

\begin{proof}
It was noted in \cite[Lemma 6.3]{OpSo2} (in the context of affine Hecke algebras)
that every element of $\overline{R_\Z} (\mh H (\tilde{\mc R},k))$ can be represented
by a tempered virtual representation. Consider any irreducible tempered $\mh H$-representation
$\pi$. By \cite[Proposition 8.2]{SolGHA} there exists a $P \subset \Delta$, a discrete
series representation $\delta$ of $\mh H_P$ and an element $\nu \in i \mf a^P$, such
that $\pi$ is a direct summand of 
\[
\pi (P,\delta,\nu) = \ind_{\mh H_P \otimes \mc O (\mf t^P)}^{\mh H} (\delta \otimes \C_\nu) .
\]
By \cite[Proposition 8.3]{SolGHA} the reducibility of $\pi (P,\delta,\nu)$ is determined by 
intertwining operators $\pi (w,P,\delta,\nu)$ for elements $w \in W$ that stabilize $(P,\delta,\nu)$. 
Suppose that $\nu \neq 0$. Then $W_\nu$ is a proper parabolic subgroup of $W$, so the stabilizer
of $(P,\delta,\nu)$ is contained in $W_Q$ for some $P \subset Q \subsetneq \Delta$.
In that case $\pi = \ind_{\mh H^Q}^{\mh H}(\pi^Q)$ for some irreducible representation
$\pi^Q$ of $\mh H^Q$, so $\pi$ becomes zero in $\overline{R_\Z} (\mh H (\tilde{\mc R},k))$.

Therefore we need only $\Z$-linear combinations of summands of $\pi (\delta,P,0)$ (with 
varying $P,\delta$) to surject to $\overline{R_\Z} (\mh H (\tilde{\mc R},k))$.
Since $k$ is real, discrete series representations of $\mh H_P$ have central characters in 
$\mf a_P / W_P$ \cite[Lemma 2.13]{Slo3}. It follows that $\pi (P,\delta,0)$ and all its 
constituents (among which is $\pi$) admit a central character in $\mf a / W$. 
\end{proof}

\begin{thm}\label{thm:1.2}
Let $k$ be real-valued. The restriction-to-$W$ maps
\[
\begin{array}{ccccc}
\mr{r}_\Z & : & \Z \, \Irr_0 (\mh H (\tilde{\mc R},k)) & \to & R_\Z (W) , \\
\overline{\mr{r}} & : & \overline{R_\Z} (\mh H (\tilde{\mc R},k)) & \to & \overline{R_\Z} (W) 
\end{array}
\]
are bijective.
\end{thm}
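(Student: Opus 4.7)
The plan is induction on $|\Delta|$. The base case $|\Delta|=0$ is immediate: $W=\{1\}$, $\mh H = \mc O(\mf t)$, and the temperedness plus real-central-character conditions force $\Z\,\Irr_0(\mh H) = \Z[\C_0]$, which is sent bijectively to $R_\Z(W)=\Z$. For the inductive step, assume the theorem for all parabolic subalgebras $\mh H^P$ with $P \subsetneq \Delta$.

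Injectivity of $\mr r_\Z$ is a free gift from \eqref{eq:1.2}, since $\mr r_\C = \mr r_\Z \otimes_\Z \C$ is already known to be bijective. For surjectivity, I would exploit the explicit $\Z$-basis of $R_\Z(W)$ provided by Proposition \ref{prop:1.8}(b), namely
\[
\{\ind_{W_P}^W(H_P(u_{P,w},\rho_{P,w})) : P \in \mc P(\Delta)/W,\ w \in \mc C_P(W)\}.
\]
For the basis elements coming from $P \subsetneq \Delta$, the compatibility \eqref{eq:1.34} combined with the inductive hypothesis ($\mr r^P_\Z$ bijective) writes each $H_P(u_{P,w},\rho_{P,w})$ as $\mr r^P_\Z(\pi^P)$ for some $\pi^P \in \Z\,\Irr_0(\mh H^P)$, and therefore $\ind_{W_P}^W(H_P(u_{P,w},\rho_{P,w})) = \mr r_\Z(\ind_{\mh H^P}^{\mh H}(\pi^P))$ lies in the image of $\mr r_\Z$. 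It remains to handle the elliptic part $\{H(u_w,\rho_w) : w \in \mc C_\Delta(W)\}$, which by Proposition \ref{prop:1.8}(a) is a $\Z$-basis of $\overline{R_\Z}(W)$. Here I would invoke Proposition \ref{prop:1.7}: $\overline{\mr r}$ is surjective, so each $H(u_w,\rho_w)$ has a preimage in $\overline{R_\Z}(\mh H)$, which by Lemma \ref{lem:1.1} lifts further to some $\pi_w \in \Z\,\Irr_0(\mh H)$. Then $\mr r_\Z(\pi_w) - H(u_w,\rho_w)$ lies in $\sum_{P \subsetneq \Delta} \ind_{W_P}^W(R_\Z(W_P))$, which the previous case places inside the image of $\mr r_\Z$. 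Hence $H(u_w,\rho_w)$ lies there too, completing surjectivity.

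For $\overline{\mr r}$, I would pass to quotients. By the inductive hypothesis, the now-bijective $\mr r_\Z$ sends $\sum_{P \subsetneq \Delta} \ind_{\mh H^P}^{\mh H}(\Z\,\Irr_0(\mh H^P))$ bijectively onto $\sum_{P \subsetneq \Delta} \ind_{W_P}^W(R_\Z(W_P))$, so it descends to a bijection
\[
\Z\,\Irr_0(\mh H) \Big/ \sum_{P \subsetneq \Delta} \ind_{\mh H^P}^{\mh H}(\Z\,\Irr_0(\mh H^P)) \;\isom\; \overline{R_\Z}(W).
\]
This map factors as $\overline{\mr r} \circ \alpha$, where $\alpha$ is the natural map from the left-hand quotient to $\overline{R_\Z}(\mh H)$; by Lemma \ref{lem:1.1}, $\alpha$ is surjective. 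Since the composite is bijective and $\alpha$ is surjective, $\alpha$ must be injective and therefore bijective, forcing $\overline{\mr r}$ to be bijective as well.

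The main obstacle is really the lifting step in the elliptic case, which rests on the substantial input of Proposition \ref{prop:1.7} (and hence on \cite{CiOpTr,CiOp2}) together with the torsion-freeness Theorem \ref{thm:1.6}; it is these that permit identifying the elliptic part of the Hecke-algebra representation ring integrally with its Weyl-group counterpart. Once this identification and Proposition \ref{prop:1.8} are in hand, the rest is a clean diagram chase organised around the two-part basis of $R_\Z(W)$.
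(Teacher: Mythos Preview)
Your proof is correct and follows essentially the same approach as the paper's: induction on $|\Delta|$, with the same key inputs (equation \eqref{eq:1.2}, Proposition \ref{prop:1.7}, Lemma \ref{lem:1.1}, and the compatibility \eqref{eq:1.34}). The only cosmetic difference is that you organize the surjectivity of $\mr r_\Z$ around the explicit basis of Proposition \ref{prop:1.8}(b), whereas the paper runs a direct diagram chase on the two short exact sequences \eqref{eq:1.33} and then uses a rank count (both sides free abelian of rank $|\Irr(W)|$ by \eqref{eq:1.2}) to upgrade surjectivity to bijectivity; your invocation of Proposition \ref{prop:1.8} is thus not needed, but it does no harm.
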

\begin{proof}
We will show this by induction on the semisimple rank of $\tilde{\mc R}$ 
(i.e. the rank of $R$).
Suppose first that the semisimple rank is zero. Then $W = 1$ and $\mh H = \mc O (\mf t)$. 
For $\lambda \in \mf t$ the character
\[
\mr{ev}_\lambda : f \mapsto f(\lambda)
\]
is a tempered $\mc O (\mf t)$-representation if and only if $\Re (\lambda) = 0$. If $\lambda$
is at the same time a real central character (i.e. $\lambda \in \mf a$), then $\lambda = 0$.
Hence $\Irr_0 (\mh H )$ consists just of $\mr{ev}_0$. It is mapped to the trivial 
$W$-representation by r, so the theorem holds in this case.

Now let $\tilde{\mc R}$ be of positive semisimple rank. It is a direct sum of degenerate
root data with $R$ irreducible or $R$ empty, and $\mh H (\tilde{\mc R},k)$ decomposes 
accordingly. As we already know the result when $R$ is empty, it remains to establish the
case where $R$ is irreducible. 

Any proper parabolic subalgebra $\mh H^P \subset \mh H$ has smaller semisimple rank, so
by the induction hypothesis 
\begin{equation}\label{eq:1.7}
\mr{r}^P : \Z \, \Irr_0 (\mh H^P) \to \Z \, \Irr_0 (W_P) \text{ is bijective.} 
\end{equation}
Consider the commutative diagram
\begin{equation}\label{eq:1.33}
\begin{array}{ccccccccc}
0 & \to & \sum_{P \subsetneq \Delta} \ind_{\mh H^P}^{\mh H} \big( \Z \,\Irr_0 (\mh H^P) \big) &
\to & \Z \, \Irr_0 (\mh H) & \to & \overline{R_\Z}(\mh H) & \to & 0 \\
& & \downarrow & & \downarrow & & \downarrow \\
0 & \to & \sum_{P \subsetneq \Delta} \ind_{W_P}^W \big( R_\Z (W_P) \big) &
\to & R_\Z (W) & \to & \overline{R_\Z}(W) & \to & 0 
\end{array} 
\end{equation}
The second row is exact by definition. By \eqref{eq:1.7} and \eqref{eq:1.34} the left vertical 
arrow is bijective and by Proposition \ref{prop:1.7} the right vertical arrow is surjective. 
Together with Lemma \ref{lem:1.1} these imply that the middle vertical arrow is surjective.
By \eqref{eq:1.2} both $\Z \, \Irr_0 (\mh H)$ and $R_\Z (W)$ are free abelian groups of 
the same rank $|\Irr (W)| = |\Irr_0 (\mh H)|$, so the middle vertical arrow is in fact bijective.

The results so far imply that the kernel of $\Z \, \Irr_0 (\mh H) \to \overline{R_\Z}(W)$
is precisely $\sum_{P \subsetneq \Delta} \ind_{\mh H^P}^{\mh H} \big( \Z \, \Irr_0 (\mh H^P) \big)$.
The latter group is already killed in $\overline{R_\Z}(\mh H)$, so the map
$\overline{R_\Z}(\mh H) \to  \overline{R_\Z}(W)$ is injective as well.
We conclude that \eqref{eq:1.33} is a bijection between two short exact sequences.
\end{proof}

We will need Theorem \ref{thm:1.2} for somewhat more general algebras. Let $\Gamma$ be a
finite group acting on $\tilde{\mc R}$: it acts $\R$-linearly on $\mf a$, and the dual
action on $\mf a^*$ stabilizes $R$ and $\Delta$. We assume that $k_{\gamma (\alpha)} =
k_\alpha$ for all $\alpha \in R, \gamma \in \Gamma$. Then $\Gamma$ acts on 
$\mh H (\tilde{\mc R},k)$ by the algebra automorphisms satisfying
\[
\gamma (\xi N_w) = \gamma (\xi) N_{\gamma w \gamma^{-1}} \qquad 
\gamma \in \Gamma, \xi \in \mf a^* , w \in W .
\]
Let $\natural : \Gamma^2 \to \C^\times$ be a 2-cocycle and let $\C[\Gamma,\natural]$ be the 
twisted group algebra. We recall that it has a standard basis 
$\{ N_\gamma : \gamma \in \Gamma \}$ and multiplication rules
\[
N_\gamma N_{\gamma'} = \natural (\gamma,\gamma') N_{\gamma \gamma'} 
\qquad \gamma, \gamma' \in \Gamma .
\]
We can endow the vector space $\mh H (\tilde{\mc R},k) \otimes \C [\Gamma,\natural]$
with the algebra structure such that 
\begin{itemize}
\item $\mh H (\tilde{\mc R},k)$ and $\C [\Gamma,\natural]$ are embedded as subalgebras,
\item $N_\gamma h N_\gamma^{-1} = \gamma (h)$ for 
$\gamma \in \Gamma, h \in \mh H (\tilde{\mc R},k)$.
\end{itemize}
We denote this algebra by $\mh H (\tilde{\mc R},k) \rtimes \C [\Gamma,\natural]$ and call
it a twisted graded Hecke algebra. If $\natural$ is trivial, then it reduces to the crossed 
product $\mh H (\tilde{\mc R},k) \rtimes \Gamma$. All our previous notions for graded 
Hecke algebras admit natural generalizations to this setting.

Notice that $W \Gamma$ is a group with $W$ as normal subgroup and $\Gamma$ as quotient. 
The 2-cocycle $\natural$ can be lifted to $(W \Gamma)^2 \to \Gamma^2 \to (\C^\times)^2$,
and that yields a twisted group algebra $\C [W\Gamma,\natural]$ in 
$\mh H (\tilde{\mc R},k) \rtimes \C [\Gamma,\natural]$. It is worthwhile to note the case $k=0$:
\begin{equation}\label{eq:1.18}
\mh H (\tilde{\mc R},0) \rtimes \C [\Gamma,\natural] = 
\mc O (\mf t) \rtimes \C [W \Gamma,\natural]) .
\end{equation}
We consider the restriction map
\begin{equation}\label{eq:1.10}
\mr{r} : \Mod \big( \mh H (\tilde{\mc R},k) \rtimes \C [\Gamma,\natural] \big) \to
\Mod ( \C [W\Gamma,\natural] ) .
\end{equation}
Every $\C [W\Gamma,\natural]$-module can be extended in a unique way to an 
$\mc O (\mf t) \rtimes \C [W \Gamma,\natural])$-module on which $\mc O (\mf t)$ acts
via evaluation at $0 \in \mf t$, so the right hand side of \eqref{eq:1.10} 
can be considered as a subcategory
of $\Mod \big( \mh H (\tilde{\mc R},0) \rtimes \C [\Gamma,\natural] \big)$.

\begin{prop}\label{prop:1.3}
Let $k : R / W\Gamma \to \R$ be a parameter function and let $\natural : 
\Gamma^2 \to \C^\times$ be a 2-cocycle. The map \eqref{eq:1.10} induces a bijection
\[
\mr{r}_\Z : \Z \, \Irr_0 \big( \mh H (\tilde{\mc R},k) \rtimes \C [\Gamma,\natural] \big) 
\to R_\Z ( \C [W\Gamma,\natural] ) .
\]
\end{prop}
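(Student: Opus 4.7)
My plan is to deduce Proposition \ref{prop:1.3} from Theorem \ref{thm:1.2} by combining it with a Clifford-theoretic argument relative to the normal subalgebras $\mh H \subset \mh H \rtimes \C [\Gamma,\natural]$ and $\C W \subset \C [W\Gamma,\natural]$.

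First, I would verify that the restriction bijection $\mr r_\Z$ from Theorem \ref{thm:1.2} is $\Gamma$-equivariant. For any $\gamma \in \Gamma$ and $\pi \in \Irr_0 (\mh H)$, the twisted representation $\gamma \cdot \pi$ satisfies $(\gamma \cdot \pi )|_W = \gamma \cdot (\pi |_W)$, because $\gamma$ acts on $\mh H$ by an algebra automorphism whose restriction to $\C W \subset \mh H$ is the standard action of $\gamma$ on $W$. Temperedness and the condition that the central character lie in $\mf a / W$ are both preserved by $\Gamma$, so Theorem \ref{thm:1.2} furnishes a $\Gamma$-equivariant $\Z$-linear isomorphism.

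Second, I would apply Clifford--Mackey theory to the two inclusions. For a $\Gamma$-orbit of $\pi \in \Irr_0 (\mh H)$ with stabilizer $\Gamma_\pi \subset \Gamma$, one obtains a 2-cocycle $\natural_\pi$ on $\Gamma_\pi$ (the product of $\natural |_{\Gamma_\pi^2}$ with the obstruction to lifting $\pi$ canonically to a representation of $\mh H \rtimes \C [\Gamma_\pi,-]$) and a decomposition
\[
\Z \, \Irr_0 \big( \mh H \rtimes \C [\Gamma,\natural] \big) \;\cong\; \bigoplus\nolimits_{[\pi]} R_\Z \big( \C [\Gamma_\pi,\natural_\pi] \big) ,
\]
and similarly for $R_\Z (\C [W\Gamma,\natural])$ with orbits $[\sigma]$ of irreducible $W$-representations and 2-cocycles $\natural'_\sigma$ on $\Gamma_\sigma$. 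The restriction functor is compatible with both decompositions: it commutes with the $\Gamma$-actions and with the formation of stabilizers, and any intertwiner $\gamma \cdot \pi \cong \pi$ restricts to an intertwiner $\gamma \cdot (\pi |_W) \cong \pi |_W$, so that $\natural_\pi$ and the cocycle data on the image of $\pi$ under $\mr r$ agree. Applying Theorem \ref{thm:1.2} inside each matching Clifford block then yields the desired bijection.

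The main obstacle is the precise identification of the obstruction 2-cocycles on the two sides, especially because $\pi |_W$ is typically reducible, so the pairing between the orbits $[\pi]$ and the orbits $[\sigma]$ is only visible after expanding the restriction of $\pi$ in the basis $\Irr(W)$ via Theorem \ref{thm:1.2}. I expect one must combine the $\Gamma$-equivariance of $\mr r_\Z$ with the $\Z\Gamma$-module structure on both Grothendieck groups, and use Krull--Schmidt-type considerations for permutation $\Z\Gamma$-modules to match orbit structures, before transferring the cocycles through the bijection. Once this matching is in hand, the invertibility over $\Z$ of each Clifford block is automatic, and $\mr r_\Z$ is a direct sum of bijections.
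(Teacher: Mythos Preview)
Your overall strategy---reduce Proposition \ref{prop:1.3} to Theorem \ref{thm:1.2} via Clifford theory for the inclusions $\mh H \subset \mh H \rtimes \C[\Gamma,\natural]$ and $\C[W] \subset \C[W\Gamma,\natural]$---is exactly the route the paper takes. The $\Gamma$-equivariance of $\mr r_\Z$ is indeed the starting point, and the paper also begins by disposing of the twist: one passes to a finite central extension $\tilde\Gamma \to \Gamma$ trivializing $\natural$, so that $\C[\Gamma,\natural] \cong p_\natural \C[\tilde\Gamma]$ for a central idempotent $p_\natural$, and the whole statement becomes a direct summand of the untwisted one for $\tilde\Gamma$. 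This step costs nothing and removes one layer of bookkeeping you would otherwise have to carry.

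The genuine gap is precisely where you flag it: the matching of obstruction cocycles. Your proposed block decomposition
\[
\Z\,\Irr_0(\mh H \rtimes \Gamma) \cong \bigoplus\nolimits_{[\pi]} R_\Z(\C[\Gamma_\pi,\natural_\pi])
\]
and its analogue on the $W$-side are fine, but $\mr r_\Z$ does \emph{not} send one block to one block, because $\pi|_W$ is reducible and its irreducible constituents can have different stabilizers $\Gamma_\sigma$. So there is no direct ``block-by-block'' comparison, and your Krull--Schmidt suggestion does not address this: even after matching orbit structures as $\Z\Gamma$-modules, you would still need to identify $\natural_\pi$ with something on the other side, and nothing in your outline produces that identification.

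The paper resolves this differently. After reducing to trivial $\natural$, it invokes a specific fact about Weyl groups: for every $\tau \in \Irr(W)$ the Clifford obstruction cocycle $\natural_\tau$ of $\Gamma_\tau$ is \emph{trivial} (this is \cite[Proposition 4.3]{ABPS1}). From the compatibility $\mr r\big(\ind_{\mh H \rtimes \Gamma_\pi}^{\mh H \rtimes \Gamma}(M \otimes V)\big) = \ind_{W \rtimes \Gamma_\pi}^{W \rtimes \Gamma}(M \otimes \mr r(V))$ one then deduces that each $\natural_\pi$ is trivial as well. With all cocycles trivial, surjectivity is shown directly: given $\ind_{W\Gamma_\tau}^{W\Gamma}(N \otimes V_\tau) \in \Irr(W\Gamma)$, expand $V_\tau = \sum_\pi m_\pi\, \mr r(V_\pi)$ via Theorem \ref{thm:1.2}; the uniqueness of this expansion and $\Gamma$-equivariance force $\Gamma_\pi \supset \Gamma_\tau$ whenever $m_\pi \neq 0$, so $N \otimes V_\pi$ makes sense and one checks the given irreducible equals $\mr r$ applied to $\sum_\pi m_\pi \ind_{\mh H \rtimes \Gamma_\tau}^{\mh H \rtimes \Gamma}(N \otimes V_\pi)$. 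Injectivity is obtained separately, from the known $\C$-linear bijectivity of $\mr r_\C$.
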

\begin{proof}
Let $\tilde{\Gamma} \to \Gamma$ be a finite central extension such that $\natural$ becomes
trivial in $H^2 (\tilde{\Gamma},\C^\times)$. Such a group always exists: one can take the
Schur extension from \cite[Theorem 53.7]{CuRe}. Then there exists a central idempotent
$p_\natural \in \C [\ker (\tilde{\Gamma} \to \Gamma)]$ such that
\begin{equation}\label{eq:1.11}
\C [\Gamma,\natural] \cong p_\natural \C [\tilde{\Gamma}] . 
\end{equation}
The map $\mr{r}_\Z$ becomes
\begin{equation}\label{eq:1.12}
\Z \, \Irr_0 \big( \mh H (\tilde{\mc R},k) \rtimes p_\natural \C [\tilde \Gamma] \big) \to
R_\Z ( p_\natural \C [W \tilde \Gamma] ) .
\end{equation}
Since $p_\natural \C [\tilde{\Gamma}]$ is a direct summand of $\C [\tilde{\Gamma}]$,
\eqref{eq:1.12} is just a part of
\[
\mr{r}_\Z : \Z \, \Irr_0 \big( \mh H (\tilde{\mc R},k) \rtimes \tilde \Gamma \big) \to
R_\Z ( W \rtimes \tilde \Gamma ) .
\]
Hence it suffices to prove the proposition when $\natural$ is trivial, which we assume
from now on. By \cite[Theorem 6.5.c]{SolHomGHA}
\begin{equation}\label{eq:1.14}
\mr{r}_\C : \C \, \Irr_0 \big( \mh H (\tilde{\mc R},k) \rtimes \Gamma \big) 
\to R_\C ( W\Gamma ) .
\end{equation}
is a $\C$-linear bijection. So at least
\begin{equation}\label{eq:1.15}
\mr{r}_\Z : \Z \, \Irr_0 \big( \mh H (\tilde{\mc R},k) \rtimes \Gamma \big) 
\to R_\Z ( W\Gamma ) 
\end{equation}
is injective and has image of finite index in $R_\Z ( W\Gamma )$.

Given $(\pi,V) \in \Irr (\mh H (\tilde{\mc R},k))$, let $\Gamma_\pi$ be
the stabilizer in $\Gamma$ of the isomorphism class of $\pi$. For every $\gamma \in
\Gamma_\pi$ we can find $I^\gamma \in \Aut_\C (V)$ such that 
\[
I^\gamma \circ \pi (N_\gamma h N_\gamma^{-1}) = \pi (h) \circ I^\gamma \qquad
\text{for all } h \in \mh H (\tilde{\mc R},k) .
\]
By Schur's Lemma there exists a 2-cocycle $\natural_\pi : \Gamma_\pi^2 \to \C^\times$
such that
\[
I^{\gamma \gamma'} = \natural_\pi (\gamma,\gamma') I^\gamma I^{\gamma'} \quad
\text{for all } \gamma, \gamma' \in \Gamma .
\]
Let $(\tau,M) \in \Irr (\C [\Gamma_\pi,\natural_\pi])$, then $M \otimes V$ becomes an 
irreducible $\mh H \rtimes \Gamma_\pi$-module. Clifford theory (see e.g.
\cite[Appendix]{RaRa}, \cite[\S 51]{CuRe} or \cite[Appendix]{SolGHA}) tells us that
$\ind_{\mh H \rtimes \Gamma_\pi}^{\mh H \rtimes \Gamma} (M \otimes V)$ is an
irreducible $\mh H \rtimes \Gamma$-module. Moreover this construction provides a bijection
\[
\Irr (\mh H \rtimes \Gamma) \to \{ (\pi,M) : \pi \in \Irr (\mh H ) / \Gamma, 
M \in \Irr (\C [\Gamma_\pi, \natural_\pi]) \} .
\]
We note that
\begin{equation}\label{eq:1.13}
\mr{r} \big( \ind_{\mh H \rtimes \Gamma_\pi}^{\mh H \rtimes \Gamma} (M \otimes V) \big) =
\ind_{W \rtimes \Gamma_\pi}^{W \rtimes \Gamma} (M \otimes \mr{r}(V) ) .
\end{equation}
Similarly, Clifford theory provides a bijection between $\Irr (W \rtimes \Gamma)$ and
\[
\{ (\tau,N) : \tau \in \Irr (W) / \Gamma , N \in \Irr (\C [\Gamma_\tau, \natural_\tau]) \} .
\]
Since $W$ is a Weyl group, the 2-cocycle $\natural_\tau$ is always trivial 
\cite[Proposition 4.3]{ABPS1}. With \eqref{eq:1.13} it follows that $\natural_\pi$ is
also trivial, for every $\pi \in \Irr (\mh H (\tilde{\mc R},k))$.

Consider any $\ind_{W \rtimes \Gamma_\tau}^{W \rtimes \Gamma} (N \otimes V_\tau) \in
\Irr (W \rtimes \Gamma)$. Theorem \ref{thm:1.2} guarantees the existence of unique
$m_\pi \in \Z$ such that $V_\tau = \sum_{(\pi,V) \in \Irr_0 (\mh H)} m_\pi \mr{r}(V)$.
By the uniqueness, $\Gamma_\pi \supset \Gamma_\tau$ whenever $m_\pi \neq 0$.
Hence $N \otimes V$ is a well-defined $\mh H \rtimes \Gamma_\pi$-module (it may be
reducible though), and
\begin{multline*}
\ind_{W \rtimes \Gamma_\tau}^{W \rtimes \Gamma} (N \otimes V_\tau) = \\
\ind_{W \rtimes \Gamma_\tau}^{W \rtimes \Gamma} \big( N \otimes 
\sum_{(\pi,V) \in \Irr_0 (\mh H)} m_\pi \mr{r}(V) \big) 
= \mr{r} \big( \sum_{(\pi,V) \in \Irr_0 (\mh H)} m_\pi \ind_{\mh H \rtimes 
\Gamma_\pi}^{\mh H \rtimes \Gamma} (N \otimes V) \big) .
\end{multline*}
This proves that \eqref{eq:1.15} is also surjective.
\end{proof}

\subsection{Affine Hecke algebras} \
\label{par:AHA}

Let $\mc H$ be an affine Hecke algebra with positive parameters $q$. We compare its Grothendieck
group of finite length modules $R_\Z (\mc H)$ with the analogous group for the parameters $q=1$.
By some of the main results of \cite{SolAHA}, the $\Q$-vector space $\Q \otimes_\Z R_\Z (\mc H)$
is canonically isomorphic to its analogue for $q=1$. We show that this is already an isomorphism
for $R_\Z (\mh H)$, without tensoring by $\Q$. This follows from the results of the previous 
paragraph, in combination with the standard reduction from affine Hecke algebras to graded
Hecke algebras \cite{Lus-Gr}.

As before, let $\mc R = (X,R,Y,R^\vee,\Delta)$ be a based root datum.
We have the affine Weyl group $W^\af = \mh Z R \rtimes W$ 
and the extended (affine) Weyl group $W^e = X \rtimes W$. Both can be considered as groups
of affine transformations of $\mf a^*$. We denote the translation corresponding to $x \in X$ by 
$t_x$. As is well-known, $W^\af$ is a Coxeter group, and the basis $\Delta$ of $R$ gives rise to 
a set $S^\af$ of simple (affine) reflections. More explicitly, let $\Delta_M^\vee$ be the set of 
maximal elements of $R^\vee$, with respect to the dominance ordering coming from $\Delta$. Then
\[
S^\af = S_\Delta \cup \{ t_\alpha s_\alpha : \alpha^\vee \in \Delta_M^\vee \} .
\]
The length function $\ell$ of the Coxeter system $(W^\af ,S^\af )$ extends naturally to $W^e$.
The elements of length zero form a subgroup $\Omega \subset W^e$ and $W^e = W^\af \rtimes \Omega$.

A complex parameter function for $\mc R$ is a map $q : S^\af \to \mh C^\times$ such that 
$q(s) = q(s')$ if $s$ and $s'$ are conjugate in $W^e$. This extends naturally to a map 
$q : W^e \to \C^\times$ which is 1 on $\Omega$ and satisfies 
\[
q(w w') = q(w) q(w') \quad \text{if} \quad \ell (w w') = \ell (w) + \ell (w').
\]
Equivalently (see \cite[\S 3.1]{Lus-Gr}) one can define $q$ as a $W$-invariant function
\begin{equation}\label{eq:1.21}
q : R \cup \{ 2 \alpha : \alpha^\vee \in 2 Y \} \to \C^\times . 
\end{equation}
We speak of equal parameters if $q(s) = q(s')$ for all $s,s' \in S^\af$ and of positive 
parameters if $q(s) \in \R_{>0}$ for all $s \in S^\af$. 
We fix a square root $q^{1/2} : S^\af \to \mh C^\times$.

The affine Hecke algebra $\mc H = \mc H (\mc R ,q)$ is the unique associative
complex algebra with basis $\{ N_w \mid w \in W^e \}$ and multiplication rules
\begin{equation}\label{eq:multrules}
\begin{array}{lll}
N_w \, N_{w'} = N_{w w'} & \mr{if} & \ell (w w') = \ell (w) + \ell (w') \,, \\
\big( N_s - q(s)^{1/2} \big) \big( N_s + q(s)^{-1/2} \big) = 0 & \mr{if} & s \in S^\af .
\end{array}
\end{equation}
In the literature one also finds this algebra defined in terms of the
elements $q(s)^{1/2} N_s$, in which case the multiplication can be described without
square roots. This explains why $q^{1/2}$ does not appear in the notation $\mc H (\mc R ,q)$.
For $q = 1$ \eqref{eq:multrules} just reflects the defining relations of $W^e$, so
$\mc H (\mc R,1) = \C [W^e]$.

The set of dominant elements in $X$ is
\[
X^+ = \{ x \in X : \inp{x}{\alpha^\vee} \geq 0 \; \forall \alpha \in \Delta \} .
\]
The subset $\{ N_{t_x}  : x \in X^+ \} \subset \mc H (\mc R,q)$ is closed under 
multiplication, and isomorphic to $X^+$ as a semigroup. For any $x \in X$ we put
\[
\theta_x = N_{t_{x_1}} N_{t_{x_2}}^{-1} \text{, where } 
x_1 ,x_2 \in X^+ \text{ and } x = x_1 - x_2 . 
\]
This does not depend on the choice of $x_1$ and $x_2$, so $\theta_x \in \mc H (\mc R,q)^\times$
is well-defined. The Bernstein presentation of $\mc H (\mc R,q)$ \cite[\S 3]{Lus-Gr} 
says that:
\begin{itemize}
\item $\{ \theta_x : x \in X \}$ forms a $\C$-basis of a subalgebra of $\mc H (\mc R,q)$
isomorphic to $\C[X] \cong \mc O (T)$, which we identify with $\mc O (T)$.
\item $\mc H (W,q) := \C \{ N_w : w \in W \}$ is a finite dimensional subalgebra of 
$\mc H (\mc R,q)$ (known as the Iwahori--Hecke algebra of $W$).
\item The multiplication map $\mc O (T) \otimes \mc H (W,q) \to \mc H (\mc R,q)$ 
is a $\C$-linear bijection.
\item There are explicit cross relations between $\mc H (W,q)$ and $\mc O (T)$, deformations
of the standard action of $W$ on $\mc O (T)$.
\end{itemize}
To define parabolic subalgebras of affine Hecke algebras, we associate some objects
to any $P \subset \Delta$:
\[
\begin{array}{l@{\qquad}l}
X_P = X \big/ \big( X \cap (P^\vee )^\perp \big) &
X^P = X / (X \cap \mh Q P ) , \\
Y_P = Y \cap \mh Q P^\vee & Y^P = Y \cap P^\perp , \\
T_P = \mr{Hom}_{\mh Z} (X_P, \mh C^\times ) &
T^P = \mr{Hom}_{\mh Z} (X^P, \mh C^\times ) , \\
\mc R_P = ( X_P ,R_P ,Y_P ,R_P^\vee ,P) & \mc R^P = (X,R_P ,Y,R_P^\vee ,P) , \\
\mc H_P = \mc H (\mc R_P,q_P) & \mc H^P = \mc H (\mc R^P ,q^P) .
\end{array}
\]
Here $q_P$ and $q^P$ are derived from $q$ via \eqref{eq:1.21}. Both $\mc H_P$ and $\mc H^P$
are called parabolic subalgebras of $\mc H$. One can regard $\mc H_P$ as a ``semisimple"
quotient of $\mc H^P$.

Any $t \in T^P$ and any $u \in T^P \cap T_P$ give rise to algebra automorphisms
\begin{equation}\label{eq:1.23}
\begin{array}{llcl}
\psi_u : \mc H_P \to \mc H_P , & \theta_{x_P} N_w & \mapsto & u (x_P) \theta_{x_P} N_w , \\
\psi_t : \mc H^P \to \mc H^P , & \theta_x N_w & \mapsto & t(x) \theta_x N_w  .
\end{array}
\end{equation}
Let $\Gamma$ be a finite group acting on $\mc R$, i.e. it acts $\Z$-linearly on $X$ and
preserves $R$ and $\Delta$. We also assume that $\Gamma$ acts on $T$ by affine 
transformations, whose linear part comes from the action on $X$. Thus $\Gamma$ acts on
$\mc O (T) \cong \C [X]$ by
\begin{equation}\label{eq:1.38}
\gamma (\theta_x) = z_\gamma (x) \theta_{\gamma x} ,  
\end{equation}
for some $z_\gamma \in T$. Since this is a group action, we must have $z_\gamma \in T^W$.

We suppose throughout that $q^{1/2}$ is $\Gamma$-invariant, so
that $\gamma \in \Gamma$ acts on $\mc H (\mc R,q)$ by the algebra automorphism
\begin{equation}\label{eq:1.39}
\sum_{w \in W, x \in X} c_{x,w} \theta_x N_w \; \mapsto \; 
\sum_{w \in W, x \in X} c_{x,w} z_\gamma (x) \theta_{\gamma (x)} N_{\gamma w \gamma^{-1}} . 
\end{equation}
We can build the crossed product algebra 
\begin{equation}\label{eq:1.22}
\mc H (\mc R,q) \rtimes \Gamma. 
\end{equation}
In \cite{SolAHA} we considered a slightly less general action of $\Gamma$ on $\mc H (\mc R,q)$,
where the elements $z_\gamma \in T^W$ from \eqref{eq:1.38} were all equal to 1. But 
the relevant results from \cite{SolAHA}  do not rely on $\Gamma$ fixing the unit element of $T$,
so they are also valid for the actions as in \eqref{eq:1.39}. In this paper we will tacitly
use some results from \cite{SolAHA} in the generality of \eqref{eq:1.39}. We note that
nontrivial $z_\gamma \in T^W$ are sometimes needed to describe Hecke algebras coming from
$p$-adic groups, for example \cite[\S 4]{Roc}.

We can also endow the group $\Gamma$ with a 2-cocycle $\natural : \Gamma^2 \to \C^\times$. 
Then the vector space $\mc H (\mc R,q) \otimes \C[\Gamma,\natural]$ obtains a multiplication 
such that $\mc H(\mc R,q)$ and $\C[\Gamma,\natural]$ are subalgebras and
\[
N_\gamma h N_\gamma^{-1} = \gamma (h) \quad 
\text{for all } \gamma \in \Gamma, h \in \mc H (\mc R,q) . 
\]
We denote this by $\mc H (\mc R,q) \rtimes \C[\Gamma,\natural]$ and call it a twisted 
affine Hecke algebra. Such twists seem necessary to describe algebras appearing in the
representation theory of non-split $p$-adic groups, see e.g. \cite[Example 5.5]{ABPS2}. 
For reference we record the case $q = 1$:
\begin{equation}\label{eq:1.20}
\mc H (\mc R, 1) \rtimes \C[\Gamma,\natural] = \mc O (T) \rtimes \C[W\Gamma,\natural] .
\end{equation}
The representation theory of (twisted) affine Hecke algebras is closely related to that 
of (twisted) graded Hecke algebras, as first shown by Lusztig \cite{Lus-Gr}. 
Since $\mc H (\mc R,q)$ is of finite rank as a module over its commutative subalgebra
$\mc O (T)$, all irreducible $\mc H (\mc R,q)$-modules have finite dimension. 
The set of $\mc O (T)$-weights of an $\mc H (\mc R,q)$-module $V$ will be denoted by
Wt$(V)$. 

The vector space $\mf t = \mf a \oplus i \mf a$ can now be interpreted as the Lie algebra
of the complex torus $T = \Hom_\Z (X,\C^\times)$. The latter has a polar decomposition 
$T = T_\rs \times T_\un$ where $T_\rs = \Hom_\Z (X,\R_{>0})$ and $T_\un$ is the unique 
maximal compact subgroup of $T$. The polar decomposition of an element $t \in T$ is
written as $t = |t| \, (t \, |t|^{-1})$.

We write $T^- = \exp (\mf a^-) \subset T_\rs$ and $T^{--} = \exp (\mf a^{--}) \subset T_\rs$. 
We say that a module $V$ for $\mc H (\mc R,q)$ (or for $\mc H (\mc R,q) \rtimes \C[\Gamma,\natural]$) 
is tempered if $|\mr{Wt}(V)| \subset T^-$, and that it is discrete series if
$|\mr{Wt}(V)| \subset T^{--}$. (The latter is only possible if $R$ spans $\mf a$, for
otherwise $\mf a^{--}$ and $T^{--}$ are empty.)

By the Bernstein presentation, the centre of $\mc H (\mc R,q) \rtimes \C[\Gamma,\natural]$ 
contains $\mc O (T)^{W \Gamma}$. For any $W \Gamma$-invariant subset $U \subset T$, let 
$\Mod_{f,U} \big(\mc H (\mc R,q) \rtimes \C[\Gamma,\natural]\big)$ be the category of finite 
dimensional $\mc H (\mc R,q) \rtimes \C[\Gamma,\natural]$-modules awhose 
$\mc O (T)^{W \Gamma}$-weights all lie in $U / W \Gamma$. We denote the Grothendieck group
of this category by $R_{\Z,U}\big( \mc H (\mc R,q) \rtimes \C[\Gamma,\natural]\big)$.

The centre of  $\mh H (\tilde{\mc R},k) \rtimes \C[\Gamma,\natural]$ contains 
$\mc O (\mf t)^{W \Gamma}$. For any $W\Gamma$-invariant subset $V \subset \mf t$ we define
$\Mod_{f,V} \big(\mh H (\tilde{\mc R},k) \rtimes \C[\Gamma,\natural] \big)$ analogously.\\

Fix $u \in T_\un$. To $\mc R$ and $u$ we can associate some new objects. 
First we define the root system
\[
R_u = \{ \alpha \in R : s_\alpha (u) = u \} ,
\]
and we let $\Delta_u$ be the unique basis of $R_u$ contained in $R^+$. Then 
\begin{align*}
& (W \Gamma)_u = W(R_u) \rtimes \Gamma'_u , \\
& \Gamma'_u = \{ w \in W \Gamma : w (u) = u, w (\Delta_u) = \Delta_u \} . 
\end{align*}
Now we can define the based root data
\[
\mc R_u = (X,R_u,Y,R_u^\vee,\Delta_u) \quad \text{and} \quad
\tilde{\mc R_u} = (\mf a^* ,R_u,\mf a ,R_u^\vee,\Delta_u) .
\]
We define a parameter function $k_u : R_u \to \R$ for $\tilde{\mc R_u}$ by
\[
2 k_{u,\alpha} = \log (q(s_\alpha)) + \alpha (u) \log (q(t_\alpha s_\alpha)) . 
\]
Let $\natural_u : (\Gamma'_u )^2 \to \C^\times$ be the restriction to $\natural$.
With a slight variation on Lusztig's reduction theorems \cite[\S 8--9]{Lus-Gr}
one can prove:

\begin{thm}\label{thm:1.4}
Let $q : W^e \to \R_{>0}$ be a positive parameter function. The categories
\[
\Mod_{f,W\Gamma u T_\rs} \big( \mc H (\mc R,q) \rtimes \C[\Gamma,\natural] \big) 
\quad \text{and} \quad \Mod_{f,\mf a} 
\big( \mh H (\tilde{\mc R_u},k_u) \rtimes \C[\Gamma'_u,\natural_u ] \big) 
\]
are equivalent. The equivalence respects parabolic induction, temperedness and
discrete series.
\end{thm}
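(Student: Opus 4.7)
The strategy is to adapt Lusztig's reduction theorems \cite[\S 8--9]{Lus-Gr} to the twisted equivariant setting. The plan is to pass to formal completions at the relevant central-character subvariety, perform a Morita-type idempotent reduction to a single stabilizer orbit, and then match the resulting completed affine Hecke algebra explicitly with a completed graded Hecke algebra via the Bernstein-to-graded substitution $t = u\exp(\nu)$.

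First I would form the completion $\mc H^\wedge$ of $\mc H(\mc R,q)\rtimes \C[\Gamma,\natural]$ along the $W\Gamma$-invariant closed subvariety $W\Gamma u T_\rs \subset T$ (in the sense of completing the central subalgebra $\mc O(T)^{W\Gamma}$ at the corresponding ideal); finite length modules in $\Mod_{f,W\Gamma u T_\rs}$ correspond bijectively to finite length $\mc H^\wedge$-modules. Using the central idempotent $e_u \in \mc H^\wedge$ that picks out the $(W\Gamma)_u$-orbit $(W\Gamma)_u u T_\rs$ from the full $W\Gamma$-orbit, one obtains a Morita equivalence between $\mc H^\wedge$ and the corner $e_u \mc H^\wedge e_u$, and the latter is naturally a completion of an algebra built from the subdatum $\mc R_u$ together with the twisted group $\C[\Gamma'_u,\natural_u]$.

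Next, following \cite[Proposition 9.3]{Lus-Gr}, the substitution $t = u\exp(\nu)$ identifies the completed commutative subalgebra $\widehat{\mc O(T)}_u$ with $\widehat{\mc O(\mf t)}_0$, and the definition of $k_u$ is precisely engineered so that the affine Bernstein cross relations translate into the graded cross relations \eqref{eq:1.1}. This yields an algebra isomorphism between $e_u \mc H^\wedge e_u$ and the corresponding completion of $\mh H(\tilde{\mc R_u},k_u)\rtimes \C[\Gamma'_u,\natural_u]$ at $0 \in \mf a/(W(R_u)\Gamma'_u)$, and unwinding the Morita reduction produces the desired equivalence of categories. Compatibility with parabolic induction is automatic, since the whole construction commutes with the inclusions $\mc R^P \hookrightarrow \mc R$ and $\tilde{\mc R}^P \hookrightarrow \tilde{\mc R}$; compatibility with temperedness and discrete series follows from the fact that the exponential map identifies $\mf a^-$ with $T^-$ and $\mf a^{--}$ with $T^{--}$, so the weight conditions on the two sides transport exactly under $|u\exp(\nu)| = \exp(\Re \nu)$.

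The main obstacle will be verifying that the twisting data $\natural$ and the possibly nontrivial elements $z_\gamma \in T^W$ from \eqref{eq:1.38} pass through Lusztig's isomorphism cleanly. Since $\Gamma$ acts by algebra automorphisms preserving both $\mc O(T)$ and the Iwahori--Hecke part $\mc H(W,q)$, and since $z_\gamma$ is central in $\mc O(T)^W$, the $N_\gamma$ can be renormalized inside the completion so that Lusztig's matching is preserved on the nose; the cocycle $\natural$ then passes through formally because completion and corner reduction are compatible with twisting on the group part. Granting these routine checks, the proof reduces to the verifications already done in \cite{Lus-Gr} and \cite[\S 2]{SolAHA}.
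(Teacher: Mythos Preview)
Your proposal is essentially correct and follows the natural direct route: redo Lusztig's completion/idempotent argument in the presence of the twist $\natural$ and the affine shift $z_\gamma$, and then verify that parabolic induction and the temperedness conditions transport under $t = u\exp(\nu)$. The paper, however, takes a genuinely different and shorter path. Rather than checking that the cocycle $\natural$ survives Lusztig's localization isomorphisms, it \emph{untwists} first: choose a finite central extension $\tilde\Gamma \to \Gamma$ trivializing $\natural$, so that
\[
\mc H(\mc R,q)\rtimes \C[\Gamma,\natural] = p_\natural\bigl(\mc H(\mc R,q)\rtimes \tilde\Gamma\bigr),
\qquad
\mh H(\tilde{\mc R_u},k_u)\rtimes \C[\Gamma'_u,\natural_u] = p_\natural\bigl(\mh H(\tilde{\mc R_u},k_u)\rtimes \tilde\Gamma'_u\bigr)
\]
for a central idempotent $p_\natural \in \C[\ker(\tilde\Gamma\to\Gamma)]$. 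For the untwisted crossed products with $\tilde\Gamma$ the equivalence (with all claimed compatibilities) is already established in \cite[Corollary 2.15]{SolAHA} and \cite[\S 2.1]{AMS3}. Since Lusztig's comparison maps are the identity on $\C[\tilde\Gamma'_u \cap \tilde\Gamma]$, they preserve $p_\natural$, and one simply restricts to the $p_\natural$-summand.

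What each approach buys: the paper's central-extension trick reduces the entire twisted statement to a one-line observation about $p_\natural$, avoiding any need to trace $\natural$ or $z_\gamma$ through the Bernstein-to-graded substitution. Your direct approach is more self-contained and would in principle also handle cocycles that do not come from a finite central extension (though here every $\natural$ on a finite $\Gamma$ does), but the ``routine checks'' you defer in your last paragraph---that renormalizing $N_\gamma$ inside the completion keeps Lusztig's matching intact---are exactly the place where the argument could become delicate, and the paper's method sidesteps them entirely.
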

\begin{proof}
Let $\tilde \Gamma$ and the central idempotent $p_\natural$ be as in \eqref{eq:1.12}. Then
\begin{equation}\label{eq:1.16}
\begin{aligned} 
& \mc H (\mc R,q) \rtimes \C [\Gamma, \natural] = 
p_\natural (\mc H (\mc R,q) \rtimes \tilde \Gamma ) , \\
& \mh H (\tilde{\mc R_u},k_u) \rtimes \C [\Gamma'_u, \natural_u] = 
p_\natural (\mh H (\tilde{\mc R_u},k_u) \rtimes \tilde{\Gamma}'_u ) .
\end{aligned}
\end{equation}
By \cite[Corollary 2.15]{SolAHA} the theorem holds for $\mc H (\mc R,q) 
\rtimes \tilde \Gamma$ and $\mh H (\tilde{\mc R_u},k_u) \rtimes \tilde{\Gamma}'_u$.
The claimed properties of this equivalence were checked in detail in \cite[\S 2.1]{AMS3}.

This is based on a comparison of localizations of these algebras, as in \cite{Lus-Gr}.
The comparison maps \cite[Theorems 2.1.2 and 2.1.4]{SolAHA} are the identity on
$\C [\tilde{\Gamma}'_u \cap \tilde{\Gamma}]$, so they preserve $p_\natural$.
Hence we can restrict the result from \cite{SolAHA} to the direct summands
\eqref{eq:1.16}.
\end{proof}

From Theorem \ref{thm:1.4} and \eqref{eq:1.10} (and \eqref{eq:1.18} and \eqref{eq:1.20}
for the bottom line) we construct a diagram
\[
\begin{array}{ccc} 
\Mod_{f,W\Gamma u T_\rs} \big(\mc H (\mc R,q) \rtimes \C[\Gamma,\natural] \big) & \isom &
\Mod_{f,\mf a} \big(\mh H (\tilde{\mc R_u},k_u) \rtimes \C[\Gamma'_u,\natural_u ]\big) \\
\downarrow \mr{r}_u & & \downarrow \mr{r} \\
\Mod_{f,W\Gamma u} \big(\mc H (\mc R,1) \rtimes \C[\Gamma,\natural] \big) & \mosi &
\Mod_{f,0} \big(\mh H (\tilde{\mc R_u},0) \rtimes \C[\Gamma'_u,\natural_u ]\big) \\
\parallel & & \parallel \\
\Mod_{f,W\Gamma u} \big( \mc O (T) \rtimes \C[W\Gamma,\natural] \big) &
\mosi & \Mod_{f,0} \big( \mc O (\mf t) \rtimes \C[(W\Gamma)_u,\natural_u] \big) 
\end{array} 
\]
where $\mr{r}_u$ is the unique map that makes the diagram commutative. Using the
technique in the proof of Theorem \ref{thm:1.4}, we can immediately extend all relevant 
results in \cite{SolAHA} from $\mc H (\mc R,q) \rtimes \tilde \Gamma$ to twisted affine 
Hecke algebras. In view of this, we will freely use results from \cite{SolAHA} in that
generality.

As shown in \cite[\S 2.3]{SolAHA},
there exists a unique system of $\Z$-linear maps (for all possible $\mc R,q,\Gamma$)
\begin{equation}\label{eq:1.17}
\zeta^\vee : R_\Z \big(\mc H (\mc R,q) \rtimes \C[\Gamma,\natural] \big) \longrightarrow
R_\Z \big(\mc H (\mc R,1) \rtimes \C[\Gamma,\natural] \big)  
\end{equation}
such that:
\begin{itemize}
\item $\zeta^\vee (\pi) = \mr{r}_u (\pi)$ for tempered representations in
$\Mod_{f,W\Gamma u T_\rs} \big(\mc H (\mc R,q) \rtimes \C[\Gamma,\natural] \big)$,
\item $\zeta^\vee$ commutes with parabolic induction,
\item $\zeta^\vee$ respects the formation of standard modules for the Langlands 
classification, in the sense of \cite[Corollary 2.2.5]{SolAHA}.
\end{itemize}

\begin{thm}\label{thm:1.5}
The map \eqref{eq:1.17} is bijective for every positive parameter function $q$. 
\end{thm}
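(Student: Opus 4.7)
My plan is to combine the Langlands classification for twisted affine Hecke algebras, the Lusztig-type reduction of Theorem \ref{thm:1.4}, and Proposition \ref{prop:1.3}.

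First I would invoke the Langlands classification, as in \cite[\S 2.2]{SolAHA}, to obtain a canonical $\Z$-basis of $R_\Z(\mc H(\mc R,q) \rtimes \C[\Gamma,\natural])$ consisting of classes of standard modules $\pi(P,\sigma,t)$, indexed up to an explicit equivalence by triples with $P \subset \Delta$, $\sigma$ an irreducible tempered module of the corresponding parabolic subalgebra, and $t \in T^P$ strictly positive. The same basis exists for $q=1$. The third defining property of $\zeta^\vee$ listed before the theorem ensures that it sends standard modules to standard modules, with $\sigma$ replaced by $\zeta^\vee_P(\sigma)$ and the positive parameter $t$ preserved. Hence, running an induction on the semisimple rank of $\mc R$, it suffices to show that $\zeta^\vee$ restricts to a bijection on the subgroup $R_\Z^{\mr{temp}}$ spanned by the classes of irreducible tempered modules.

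Next I would decompose the tempered part by the unitary part of the central character. Every irreducible tempered module has its $\mc O(T)$-weights in a single orbit $W\Gamma \cdot u T_\rs$ for some $u \in T_\un$, well-defined modulo $W\Gamma$, so
\[
R_\Z^{\mr{temp}}\bigl(\mc H(\mc R,q) \rtimes \C[\Gamma,\natural]\bigr) \;=\;
\bigoplus_{u \in T_\un / W\Gamma} R_\Z^{\mr{temp}}\bigl(\Mod_{f,W\Gamma u T_\rs}\bigr),
\]
and an analogous decomposition holds for $q=1$. By its first defining property, $\zeta^\vee$ coincides with $\mr r_u$ on the $u$-block and hence preserves this decomposition.

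For each $u$ I would then apply Theorem \ref{thm:1.4} to transport the $u$-block to $\Mod_{f,\mf a}\bigl(\mh H(\tilde{\mc R_u},k_u) \rtimes \C[\Gamma'_u,\natural_u]\bigr)$, temperedness and all. Via \eqref{eq:1.18} and \eqref{eq:1.20} the $q=1$ counterpart becomes $\Mod_{f,0}\bigl(\mc O(\mf t) \rtimes \C[(W\Gamma)_u,\natural_u]\bigr)$, which is canonically identified with $\Mod\bigl(\C[(W\Gamma)_u,\natural_u]\bigr)$ by evaluating $\mc O(\mf t)$ at $0$; all such modules are automatically tempered, since their unique weight is $0 \in \mf a^-$. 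Under these equivalences the $u$-component of $\zeta^\vee$ becomes precisely the map $\mr r_\Z$ from Proposition \ref{prop:1.3}, which is bijective. Summing over $u \in T_\un / W\Gamma$ yields bijectivity of $\zeta^\vee$ on $R_\Z^{\mr{temp}}$, and by the first paragraph this upgrades to bijectivity on all of $R_\Z$.

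The main technical obstacle is the identification in the preceding paragraph: one must verify that the categorical equivalence of Theorem \ref{thm:1.4} transports the restriction map $\mr r_u$ used to define $\zeta^\vee$ to the restriction map $\mr r$ appearing in Proposition \ref{prop:1.3}. This amounts to unwinding the Lusztig-type localization isomorphisms from \cite{SolAHA} and observing that they act as the identity on the underlying group algebra, so that restriction-to-$W$ commutes with them. Once this compatibility is in place, the assembly of Proposition \ref{prop:1.3}, Theorem \ref{thm:1.4} and the Langlands classification finishes the proof.
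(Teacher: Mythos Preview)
Your proposal is correct and follows essentially the same route as the paper: establish the bijection on the tempered subgroup block-by-block over $u \in T_\un / W\Gamma$ by combining Theorem~\ref{thm:1.4} with Proposition~\ref{prop:1.3}, and then upgrade to all of $R_\Z$ via the Langlands classification. The only difference is presentational: you spell out the Langlands reduction explicitly as a standard-module basis argument with an induction on the semisimple rank, whereas the paper packages this step by citing \cite[\S 3.4]{SolAHA} together with the multiplicity-one property of standard modules \cite[Theorem 2.2.4]{SolAHA}.
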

\begin{proof}
Proposition \ref{prop:1.3} and Theorem \ref{thm:1.4} imply that \eqref{eq:1.17}
gives a bijection
\begin{equation}\label{eq:1.19}
R_{\Z,\mr{temp},W \Gamma u T_\rs} \big(\mc H (\mc R,q) \rtimes \C[\Gamma,\natural] 
\big) \to R_{\Z,\mr{temp},W\Gamma u} \big(\mc H (\mc R,1) \rtimes \C[\Gamma,\natural] \big) ,
\end{equation}
where the subscripts ``temp'' indicate that we formed these Grothendieck groups by 
starting with tempered modules only.
Any tempered $\mc O (T) \rtimes \C[W\Gamma,\natural]$-module only has
$\mc O (T)$-weights in $T_\un$, so on the right hand side of \eqref{eq:1.19} we may
just as well replace $W\Gamma u$ by $W\Gamma u T_\rs$. Thus \eqref{eq:1.17}
restricts to a bijection between subgroups generated by tempered modules on both
sides.

In \cite[Corollary 2.3.2]{SolAHA} it was shown that \eqref{eq:1.17} becomes a 
$\Q$-linear bijection upon tensoring both sides with $\Q$. The second half of the proof
of that result \cite[\S 3.4]{SolAHA} extends the statement from the tempered to the
general case. It says essentially that whatever happens in the space
$\Irr (\mc H (\mc R,q) \rtimes \C [\Gamma,\natural])$ can be detected and understood
already by looking at tempered representations. From that, the bijectivity in the
tempered case and the multiplicity one property of the Langlands classification
(every standard module has a unique irreducible quotient, appearing with multiplicity
one, see \cite[Theorem 2.2.4]{SolAHA}), we obtain the bijectivity of \eqref{eq:1.17}
in general.
\end{proof}

\section{Topological K-theory}

\subsection{The $C^*$-completion of an affine Hecke algebra} \

In this paragraph we recall the structure of $C^*$-algebras associated to 
affine Hecke algebras. These deep results mainly stem from the work of Delorme--Opdam
\cite{Opd-Sp,DeOp1,DeOp2}.

Recall that $q$ is a positive parameter function for $\mc R$. We define
a *-operation and a trace on $\mc H (\mc R,q)$ by
\begin{align*}
& \big( \sum\nolimits_{w \in W^e} c_w N_w \big)^* = 
\sum\nolimits_{w \in W^e} \overline{c_w} N_{w^{-1}} , \\
& \tau \big( \sum\nolimits_{w \in W^e} c_w N_w \big) = c_e .
\end{align*}
Since $q(s_\alpha) > 0$, * preserves the relations \eqref{eq:multrules}
and defines an anti-involution of $\mc H (\mc R,q)$. The set $\{ N_w : w \in W^e\}$
is an orthonormal basis of $\mc H (\mc R,q)$ for the inner product
\[
\inp{h_1}{h_2} = \tau (h_1^* h_2) . 
\]
This gives $\mc H (\mc R,q)$ the structure of a Hilbert algebra.
The Hilbert space completion $L^2 (\mc R)$ of $\mc H (\mc R,q)$ is a module 
over $\mc H (\mc R,q)$, via left multiplication. Moreover every 
$h \in \mc H (\mc R,q)$ acts as a bounded linear operator \cite[Lemma 2.3]{Opd-Sp}. 
The reduced $C^*$-algebra of $\mc H (\mc R,q)$ \cite[\S 2.4]{Opd-Sp}, denoted
$C_r^* (\mc R,q)$, is defined as the closure of $\mc H (\mc R,q)$ in the algebra of 
bounded linear operators on $L^2 (\mc R)$. 

As in \eqref{eq:1.22}, we can extend this to a $C^*$-algebra 
$C_r^* (\mc R,q) \rtimes \Gamma$, provided that $q$ is $\Gamma$-invariant.
We will not bother about twisted group algebras $\C [\Gamma,\natural]$ in this
chapter, for with the technique from \eqref{eq:1.16} is easy to generalize our
results to that setting and the context of $C^*$-algebras crossed products with
groups look much more natural.

Let us recall some background about $C_r^* (\mc R,q) \rtimes \Gamma$, mainly
from \cite{Opd-Sp,SolAHA}. It follows from \cite[Corollary 5.7]{DeOp1} that it is a
finite type I $C^*$-algebra and that $\Irr (C_r^* (\mc R,q))$ is precisely the tempered 
part of $\Irr (\mc H (\mc R,q))$. The structure of $C_r^* (\mc R,q) \rtimes \Gamma$
is described in terms of parabolically induced representations. As induction data
we use triples $(P,\delta,t)$ where:
\begin{itemize}
\item $P \subset \Delta$;
\item $\delta$ is an irreducible discrete series representation of $\mc H_P$;
\item $t \in T^P$.
\end{itemize}
We regard two triples $(P,\delta,t)$ and $(P',\delta',t')$ as equivalent if
$P = P', t = t'$ and $\delta \cong \delta'$.
Notice that $\mc H_P$ comes from a semisimple root datum, so it can have discrete
series representations. For every $t \in T^P$ there exists a surjection
$\phi_t : \mc H^P \to \mc H_P$, which combines the projection $X \to X_P$ with
evaluation at $t$. To such a triple $(P,\delta,t)$ we associate the 
$\mc H \rtimes \Gamma$-representation
\[
\pi^\Gamma (P,\delta,t) = \ind_{\mc H^P}^{\mc H \rtimes \Gamma} (\delta \circ \phi_t). 
\]
(When $\Gamma = 1$, we often suppress it from these and similar notations.)
For $t \in T_\un^P = T^P \cap T_\un$ these representations extend continuously
to the respective $C^*$-completions of the involved algebras. Let $\Xi_\un$ be
the set of triples $(P,\delta,t)$ as above, such that moreover $t \in T_\un$.
Considering $P$ and $\delta$ as discrete variables, we regard $\Xi_\un$ as a
disjoint union of finitely many compact real tori (of different dimensions).

Let $\mc V_\Xi^\Gamma$ be the vector bundle over $\Xi_\un$, whose fibre at
$(P,\delta,t)$ is the vector space underlying $\pi^\Gamma (P,\delta,t)$. That
vector space is independent of $t$, so the vector bundle is trivial. Let
$\End (\mc V_\Xi^\Gamma)$ be the algebra bundle with fibres $\End_\C \big( \pi^\Gamma 
(P,\delta,t) \big)$. Every element of $C_r^* (\mc R,q) \rtimes \Gamma$ naturally
defines a continuous section of $\End (\mc V_\Xi^\Gamma)$.

There exists a finite groupoid $\mc G$ which acts on $\End (\mc V_\Xi^\Gamma)$.
It is made from elements of $W\rtimes \Gamma$ and of $K_P := T_P \cap T^P$.
More precisely, its base space is the power set of $\Delta$, and for
$P,Q \subseteq \Delta$ the collection of arrows from $P$ to $Q$ is
\begin{equation}\label{eq:GPQ}
\mc G_{PQ} = \{ (g,u) : g \in \Gamma \ltimes W , u \in K_P , g (P) = Q \} .
\end{equation}
Whenever it is defined, the multiplication in $\mc G$ is 
\[
(g',u') \cdot (g,u) = (g' g, g^{-1} (u') u) .
\]
In particular, writing $W\Gamma (P,P) = \{ w \in W\Gamma : w (P) = P \}$, 
we have the group
\begin{equation}
\mc G_{PP} = W\Gamma (P,P) \rtimes K_P . 
\end{equation}
Usually we will write elements of $\mc G$ simply as $gu$. For $\gamma \in \Gamma W$ 
with $\gamma (P) = Q \subset \Delta$ there are algebra isomorphisms
\begin{equation}\label{eq:psigamma}
\begin{array}{llcl}
\psi_\gamma : \mc H_P \to \mc H_Q , &
\theta_{x_P} N_w & \mapsto & \theta_{\gamma (x_P)} N_{\gamma w \gamma^{-1}} , \\
\psi_\gamma : \mc H^P \to \mc H^Q , &
\theta_x N_w & \mapsto & \theta_{\gamma x} N_{\gamma w \gamma^{-1}} .
\end{array}
\end{equation}
The groupoid $\mc G$ acts from the left on $\Xi_\un$ by
\begin{equation}\label{eq:2.3}
(g,u) \cdot (P,\delta,t) := (g (P),\delta \circ \psi_u^{-1} \circ \psi_g^{-1},g (ut)) ,
\end{equation}
the action being defined if and only if $g (P) \subset \Delta$.
Suppose that $g(P) = Q \subset \Delta$ and 
$\delta' \cong \delta \circ \psi_u^{-1} \circ \psi_g^{-1}$. 
By \cite[Theorem 4.33]{Opd-Sp} and \cite[Theorem 3.1.5]{SolAHA} there exists an
intertwining operator
\begin{equation}\label{eq:2.9}
\pi^\Gamma (gu,P,\delta,t) \in \Hom_{\mc H (\mc R,q) \rtimes \Gamma} 
\big( \pi^\Gamma (P,\delta,t) , \pi^\Gamma (Q,\delta',g (ut)) \big) ,
\end{equation}
which depends algebraically on $t \in T^P_\un$.
Then the action of $\mc G$ on the continuous sections 
$C (\Xi_\un ;\End (\mc V_\Xi^\Gamma))$ is given by
\begin{equation}\label{eq:2.1}
(g \cdot f) (\xi) = \pi^\Gamma (g,g^{-1} \xi) f (g^{-1} \xi) \pi^\Gamma (g,g^{-1} \xi )^{-1}
\qquad g \in \mc G_{PQ}, \xi = (Q,\delta',t').
\end{equation}

\begin{thm}\label{thm:2.1}
\textup{(\cite[Corollary 5.7]{DeOp1} and \cite[Theorem 3.2.2]{SolAHA})} \\
There exists a canonical isomorphism of $C^*$-algebras
\[
C_r^* (\mc R,q) \rtimes \Gamma \isom C \big( \Xi_\un ; \End (\mc V_\Xi^\Gamma) \big)^{\mc G}.
\]
\end{thm}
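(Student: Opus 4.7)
The plan is to construct a Fourier-type transform $\Phi$ that sends $h \in \mc H (\mc R,q) \rtimes \Gamma$ to the section
\[
\Phi (h) : (P,\delta,t) \mapsto \pi^\Gamma (P,\delta,t)(h) \in \End_\C \big( \pi^\Gamma (P,\delta,t) \big) .
\]
Because each $(P,\delta,t) \in \Xi_\un$ has $t$ unitary, the representation $\pi^\Gamma (P,\delta,t)$ is tempered and in particular a $*$-representation of the Hilbert algebra $\mc H (\mc R,q) \rtimes \Gamma$. Moreover $\pi^\Gamma (P,\delta,t)$ depends continuously on $t \in T_\un^P$ (with $P,\delta$ fixed as discrete parameters), so $\Phi (h)$ is a continuous section of $\End (\mc V_\Xi^\Gamma)$. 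The first thing I would verify is $\mc G$-equivariance: rearranging the defining property of the intertwiner \eqref{eq:2.9}, we obtain
\[
\pi^\Gamma (g \cdot \xi)(h) \circ \pi^\Gamma (g,\xi) = \pi^\Gamma (g,\xi) \circ \pi^\Gamma (\xi)(h)
\qquad (\xi \in \Xi_\un , g \in \mc G),
\]
which is exactly the condition \eqref{eq:2.1} expressing that $\Phi (h) \in C (\Xi_\un;\End (\mc V_\Xi^\Gamma))^{\mc G}$.

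Next I would extend $\Phi$ continuously to $C_r^* (\mc R,q) \rtimes \Gamma$. The supremum
$\| h \|_\Xi := \sup_{\xi \in \Xi_\un} \| \pi^\Gamma (\xi)(h) \|_{\End}$
is a $C^*$-seminorm dominated by $\| h \|_{C_r^*}$, using that the tempered dual exhausts $\Irr (C_r^* (\mc R,q) \rtimes \Gamma)$ (which is part of the Delorme--Opdam result \cite[Corollary 5.7]{DeOp1}). The regular representation decomposes as a direct integral of the $\pi^\Gamma (P,\delta,t)$ over $\Xi_\un$ against Opdam's Plancherel measure, and that measure has full support on each component, so in fact $\| h \|_\Xi = \| h \|_{C_r^*}$. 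This simultaneously yields the continuous extension and injectivity of $\Phi$.

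The substantive step is surjectivity onto $C (\Xi_\un ;\End (\mc V_\Xi^\Gamma))^{\mc G}$. The $\mc G$-invariant sections form a $C^*$-subalgebra of $\prod_{(P,\delta)} C (T_\un^P;\End)$, and I would check density of $\Phi (\mc H (\mc R,q) \rtimes \Gamma)$ by a Stone--Weierstrass argument stratum by stratum. On each component indexed by a $\mc G$-orbit of pairs $(P,\delta)$ one uses that (i) varying $t \in T^P_\un$ through $\theta_x$-actions lets $\mc O (T)$ separate the points of $T^P_\un / \mc G_{(P,\delta)}$, and (ii) the intertwining algebra of $\pi^\Gamma (P,\delta,t)$ for generic $t$ is generated by the operators $\pi^\Gamma (g,P,\delta,t)$ for $g$ in the isotropy subgroup of $(P,\delta,t)$ in $\mc G$ \cite[Theorem 4.33]{Opd-Sp}. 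Together these two facts show that $\Phi (\mc H (\mc R,q) \rtimes \Gamma)$ generates the fiberwise endomorphism algebra as a module over $\mc G$-invariant continuous functions on $\Xi_\un$.

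The hardest part is precisely establishing the intertwining-algebra statement above uniformly in $t$: although $\pi^\Gamma (P,\delta,t)$ is irreducible at generic $t$, reducibility on the singular locus requires one to identify the image of $\Phi$ at those points with those endomorphisms that commute with the residual stabilizer action, and this in turn rests on the fine structure of the Opdam intertwiners and their regularity across walls. Once those are in hand, closing $\Phi (\mc H (\mc R,q) \rtimes \Gamma)$ in the sup norm and invoking Stone--Weierstrass on each orbit space delivers the claimed isomorphism, which is canonical because the map $\Phi$ itself is defined without choices beyond those already built into $\Xi_\un$ and the intertwiners.
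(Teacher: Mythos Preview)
The paper does not prove Theorem \ref{thm:2.1}; it quotes it as a known result from \cite[Corollary 5.7]{DeOp1} and \cite[Theorem 3.2.2]{SolAHA} and immediately moves on. So there is no in-paper argument to compare with, and your task was really to reconstruct what happens in those references.

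Your outline of the Fourier transform $\Phi$, its $\mc G$-equivariance, and the injectivity via the Plancherel decomposition is correct and is indeed how the map is set up in \cite{Opd-Sp,DeOp1}. Where your sketch diverges from the cited proofs is the surjectivity step. A bare Stone--Weierstrass argument on each component is not how Delorme--Opdam proceed, and it is not clear it can be made to work: knowing that $\mc O(T)^{W\Gamma}$ separates orbits and that the intertwiners span the commutant at each point does not by itself force the closure of $\Phi(\mc H (\mc R,q)\rtimes\Gamma)$ to contain every $\mc G$-invariant section, because Stone--Weierstrass for matrix-valued sections requires control of the full fibre algebra at every point, including the reducible locus, together with compatibility across strata. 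The actual argument in \cite{DeOp1} (building on \cite{Opd-Sp}) goes through the Schwartz completion first and constructs an explicit inverse via wave packets (Eisenstein integrals): one shows that the Fourier transform is a topological isomorphism onto the smooth $\mc G$-invariant sections of $\End(\mc V_\Xi^\Gamma)$, and the $C^*$-statement then follows by taking closures. The extension with $\Gamma$ and the precise bookkeeping of the groupoid action is what \cite[Theorem 3.2.2]{SolAHA} adds. So your proposal has the right map and the right injectivity, but the surjectivity needs the wave-packet machinery rather than the Stone--Weierstrass shortcut you suggest.
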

For $q=1$ this simplifies to the well-known isomorphism
\begin{equation}\label{eq:2.2}
C^*_r (\mc R,1) \rtimes \Gamma = C (T_\un) \rtimes W \Gamma \isom 
C \big( T_\un ; \End_\C (\C [W \Gamma]) \big)^{W \Gamma} . 
\end{equation}
Let $\mc G_{P,\delta}$ be the setwise stabilizer of $(P,\delta,T^P_\un)$ in the group 
$\mc G_{PP}$. Let $(P,\delta) / \mc G$ be a set of representatives for the action of $\mc G$
on pairs $(P,\delta)$ obtained from \eqref{eq:2.3}. Theorem \ref{thm:2.1} can be rephrased
as an isomorphism
\begin{equation}\label{eq:2.4}
C_r^* (\mc R,q) \rtimes \Gamma \isom \bigoplus\nolimits_{(P,\delta) / \mc G} 
C \big( T^P_\un ; \End_\C (\pi^\Gamma (P,\delta,t)) \big)^{\mc G_{P,\delta}} .
\end{equation}
Let us discuss the representation theory of $C_r^* (\mc R,q) \rtimes \Gamma$ 
(i.e. the tempered unitary representations of $\mc H (\mc R,q) \rtimes \Gamma)$) in more 
detail. Our approach, after Harish-Chandra and Opdam, starts with the discrete series of a 
parabolic subalgebra $\mc H (\mc R_P,q_P) = \mc H_P$. It is known from 
\cite[Lemma 3.31]{Opd-Sp} that the central character of any (irreducible) discrete series 
representation $\delta$  of $\mc H_P$ (a $W_P$-orbit in $T_P$) has a very specific property, 
it must consist of \emph{residual points} in $T_P$, with respect to $(\mc R_P,q_P)$. 

For $t \in T_P$ we write
\begin{align*}
& R_P^z (t) = \big\{ \alpha \in R_P : \alpha (t) \in \{ 1,-1\} \big\} , \\ 
& R_P^p (t) = \big\{ \alpha \in R_P : \alpha (t) \in \{ q(s_\alpha)^{1/2} 
q(s_\alpha t_\alpha)^{1/2}, -q(s_\alpha)^{1/2} q(s_\alpha t_\alpha)^{-1/2} \} \big\} .
\end{align*}
(We remark that there is only one irreducible root datum for which $q(s_\alpha t_\alpha)$
need not be equal to $q(s_\alpha)$, namely with $R = B_n$.) By definition $t \in T_P$
is residual if
\[
|R_P^p (t) | - |R_P^z (t)| = \dim_\C (T_P) = |P|. 
\]
Residuality depends in a subtle way on the parameters $q$. For instance, when $q=1$ and
$X_P \neq 0$, there are no residual points. Residual points have been classified in
\cite{HeOp}. It turns out that all the coordinates of a residual point $t$ are monomials
in the parameters $q(s)^{\pm 1/2} ,\; s \in S^\af$. Thus we can write $t = t (q^{1/2})$.

Let $\mc Q (\mc R)$ be the space of all maps $q : S^\af \to \R_{>0}$ such that 
$q(s) = q(s')$ if $s$ and $s'$ are conjugate in $X \rtimes W\Gamma$. Given $t = t (q^{1/2})$,
there is a Zariski-open subset of the real variety $\mc Q (\mc R)$ on which $t(q^{1/2})$
defines a residual point. For this reason we call the map 
\[
\mc Q (\mc R) \to T : q \mapsto t(q^{1/2}) 
\]
a generic residual point. We say that a parameter function $q \in \mc Q (\mc R)$ is generic
if all generic residual points for parabolic subalgebras $\mc H_P$ of $\mc H$ are
actually residual points for that $q$.

When there is only one free parameter in $q$, for instance when $R$ is of type 
$A, D$ or $E$, then every positive parameter function $q \neq 1$ is generic. On the other 
hand, when $R$ contains root systems of type $B,C,F$ or $G$, then usually no equal 
parameter function ($q(s) = q(s')$ for all $s,s' \in S^\af$) is generic.

The discrete series representations of $\mc H (\mc R_P,q_P)$ were classified in \cite{OpSo1}, 
at least when $R$ is irreducible and $q_P$ generic. Later the classification
was extended to the non-generic cases, along with an actual construction of the 
representations, in \cite{CiOp2}. Using these papers, it is in principle always possible
to find a set of representatives for the action of $\mc G$ on the pairs $(P,\delta)$
as in \eqref{eq:2.4}.

Now we describe a single direct summand 
$C \big( T^P_\un ; \End_\C (\pi^\Gamma (P,\delta,t)) \big)^{\mc G_{P,\delta}}$ 
of \eqref{eq:2.4} more explicitly. Fix $t \in T^P_\un$ and let $\mc G_\xi$ be the 
isotropy group of $\xi = (P,\delta,t)$ in $\mc G$. The intertwining operators
$\pi^\Gamma (g,\xi) ,\; g \in \mc G_\xi$ make $\pi^\Gamma (\xi)$ into a projective
$\mc G_\xi$-representation. Decompose it as
\[
\pi^\Gamma (\xi) = \bigoplus\nolimits_\rho \C^{m_\rho} \otimes V_\rho , 
\]
where $(\rho,V_\rho)$ runs through the set of (equivalence classes of) irreducible
projective $\mc G_\xi$-representations. From \eqref{eq:2.1} we see that the evaluation
at $t$ of any element of 
$C \big( T^P_\un ; \End_\C (\pi^\Gamma (P,\delta,t)) \big)^{\mc G_{P,\delta}}$
lies in 
\[
\End_{\mc G_\xi} (\pi^\Gamma (\xi)) \cong \bigoplus\nolimits_\rho \End_\C (\C^{m_\rho}) . 
\]
The action of $\mc G_\xi$ on $\pi^\Gamma (P,\delta,t)$ can be analysed further with the
theory of R-groups from \cite{DeOp2}. In that paper there is no group $\Gamma$, but
with the intertwining operators as in \cite[Theorem 3.1.5]{SolAHA} the extension to
the case with $\Gamma$ is straightforward. By \cite[Propositions 4.5 and 4.7]{DeOp2}
there exists a root system $R_\xi$ on which $\mc G_\xi$ acts, and an R-group
$\mf R_\xi = \mr{Stab}_{\mc G_\xi}(R_\xi \cap R_P^+)$, such that
\begin{equation}\label{eq:2.8}
\mc G_\xi = W(R_\xi) \rtimes \mf R_\xi .
\end{equation}
By \cite[Theorem 4.3.iv]{DeOp2} the intertwining operator $\pi^\Gamma (g,\xi)$ is a
scalar multiple of the identity if $g \in W(R_\xi)$. Hence 
\[
\End_{\mc G_\xi} (\pi^\Gamma (\xi)) = \End_{\mf R_\xi} (\pi^\Gamma (\xi)) .
\]
Moreover, by \cite[Theorem 5.4]{DeOp2} the operators 
\[
\pi^\Gamma (r,\xi) \in \End_\C (\pi^\Gamma (\xi)) ,\; r \in \mf R_\xi
\]
are linearly independent. To classify all irreducible representations of \\
$C \big( T^P_\un ; \End_\C (\pi^\Gamma (P,\delta,t)) \big)^{\mc G_{P,\delta}}$,
it remains to determine \eqref{eq:2.8} and to study $\pi^\Gamma (\xi)$ as a projective
$\mf R_\xi$-representation, for all $\xi = (P,\delta,t)$. In all cases that we will
encounter in this paper, $\mf R_\xi$ is abelian and $\pi^\Gamma (\xi)$ is actually
a linear $\mf R_\xi$-representation. Together with Theorem \ref{thm:1.5} this enables
us to determine $\Irr (C_r^* (\mc R,q) \rtimes \Gamma)$ in those cases.

\subsection{K-theory and equivariant cohomology} \

The computation of the topological K-theory of $C_r^* (\mc R,q) \rtimes \Gamma$ is the main
goal of this paper. It follows from \eqref{eq:2.4}, especially the compactness of $T^P_\un$, 
that the abelian group 
\[
K_* ( C_r^* (\mc R,q) \rtimes \Gamma ) = 
K_0 (C_r^* (\mc R,q) \rtimes \Gamma ) \oplus K_1 ( C_r^* (\mc R,q) \rtimes \Gamma )
\]
is finitely generated, see \cite[Lemma 5.1.3]{SolAHA} and its proof.
By \cite[Theorem 5.1.4]{SolAHA}, which relies on the study of the representation theory and
of parameter deformations of affine Hecke algebras in \cite{SolAHA}, 
$\Q \otimes_\Z K_* (C_r^* (\mc R,q) \rtimes \Gamma)$ does not depend on the parameters $q$. 
Combining this with the conclusions from Paragraph \ref{par:AHA}, we will deduce that also 
$K_* (C_r^* (\mc R,q) \rtimes \Gamma)$ itself is independent of $q$. 

Next we use equivariant cohomology and the equivariant Chern character to express 
$K_* (C_r^* (\mc R,q) \rtimes \Gamma)$ in terms of the cohomology of a sheaf on a CW-complex.
This is inspired by the equivariant Chern characters with values in Bredon cohomology
developed in \cite{Slo1,LuOl}. Our version also applies to certain non-commutative algebras,
and provides more information about the torsion elements than \cite{Slo1,LuOl}.\\

In \cite[Theorem 4.4.2]{SolAHA} an injective homomorphism of $C^*$-algebras
\[
\zeta_0 : C_r^* (\mc R,1) \rtimes \Gamma \longrightarrow
C_r^* (\mc R,q) \rtimes \Gamma
\]
was constructed, with the property
\[
\pi \circ \zeta_0 \cong \zeta^\vee (\pi) \qquad 
\text{for all } \pi \in \Mod_f (C_r^* (\mc R,q) \rtimes \Gamma) .
\]
\begin{thm}\label{thm:2.2}
The map $K_* (\zeta_0) : K_* ( C_r^* (\mc R,1) \rtimes \Gamma ) \longrightarrow
K_* (C_r^* (\mc R,q) \rtimes \Gamma)$ is an isomorphism.
\end{thm}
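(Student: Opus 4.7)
The plan is to combine three ingredients: the rational K-theoretic isomorphism from \cite[Theorem 5.1.4]{SolAHA}, the representation-theoretic bijection of Theorem \ref{thm:1.5}, and the explicit spectral description of both $C^*$-algebras provided by Theorem \ref{thm:2.1}. Since both K-groups are finitely generated abelian by \cite[Lemma 5.1.3]{SolAHA} and the rational ranks already match, the task reduces to proving that $K_*(\zeta_0)$ is bijective integrally, which will be achieved by exhibiting it as a layer-wise isomorphism along compatible composition series.

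First, I would set up compatible composition series on both sides. Using \eqref{eq:2.4}, the algebra $C_r^*(\mc R,q)\rtimes\Gamma$ decomposes as a direct sum of fixed-point section algebras over the tori $T^P_\un$ indexed by $\mc G$-orbits of pairs $(P,\delta)$, and each summand can be further filtered by $\mc G_{P,\delta}$-invariant closed subsets of $T^P_\un$. On the $q=1$ side, $C(T_\un)\rtimes W\Gamma$ admits the analogous filtration by $W\Gamma$-invariant closed subsets of $T_\un$. The defining compatibility $\pi\circ\zeta_0\cong\zeta^\vee(\pi)$ for tempered $\pi$, together with the commutativity of $\zeta^\vee$ with parabolic induction and its respect for the Langlands classification, guarantees that $\zeta_0$ carries the $q=1$ filtration into the $q$-filtration in a matching way, yielding commuting six-term exact sequences in K-theory.

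Second, on each pair of corresponding subquotients, both algebras become (up to Morita equivalence) twisted crossed products of a commutative $C^*$-algebra by a finite R-group $\mf R_\xi$ as in \eqref{eq:2.8}. Theorem \ref{thm:1.5} identifies the underlying Clifford-theoretic data on the two sides, and the final paragraph before this subsection notes that in all cases encountered $\mf R_\xi$ is abelian and $\pi^\Gamma(\xi)$ is a genuine linear $\mf R_\xi$-representation, so the relevant 2-cocycles are trivial and match on both sides. Hence on each stratum the K-theory of the two corresponding pieces can be computed directly (for instance via Green--Julg for abelian group actions), and the induced map is seen to be an isomorphism. A five-lemma induction along the filtrations, using the six-term exact sequence in K-theory, then upgrades the stratum-by-stratum isomorphisms to the global integral statement.

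The main obstacle is precisely this matching of subquotients: the spectral strata of $C_r^*(\mc R,q)\rtimes\Gamma$ are parametrized by parabolic data $(P,\delta)$ with central characters given by residual points, whereas those of $C(T_\un)\rtimes W\Gamma$ are parametrized by $W\Gamma$-orbit types in $T_\un$, and these two parametrizations are matched only indirectly through Theorem \ref{thm:1.5}. In particular one must track the intertwining operators \eqref{eq:2.9} integrally, not merely up to rational equivalence, to ensure that the Morita equivalences on each layer are compatible under $\zeta_0$. Overcoming this bookkeeping is essentially where the integral improvement over \cite[Theorem 5.1.4]{SolAHA} must come from.
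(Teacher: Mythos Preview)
Your proposal has a genuine gap. You rely crucially on the claim that ``in all cases encountered $\mf R_\xi$ is abelian and $\pi^\Gamma(\xi)$ is a genuine linear $\mf R_\xi$-representation.'' But that sentence in the paper refers only to the explicit root data worked out in Section~\ref{sec:exa} (types $GL_n$, $SL_n$, $PGL_n$, etc.), not to arbitrary $\mc R$ and $\Gamma$. Theorem~\ref{thm:2.2} is stated in full generality, and for general root data the R-groups $\mf R_\xi$ need not be abelian and the projective representations need not detwist. Without this, your stratum-by-stratum Morita analysis does not go through: you cannot simply invoke Green--Julg for abelian group actions, and even if the K-groups of corresponding layers happened to be abstractly isomorphic, you have not shown that the map \emph{induced by $\zeta_0$} on each layer is an isomorphism rather than just some homomorphism between isomorphic groups. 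The ``bookkeeping'' you flag at the end is in fact the entire content of the argument, and it is not carried out.

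The paper sidesteps all of this by a much shorter route. It notes that Theorem~\ref{thm:1.5} (in its tempered form~\eqref{eq:1.19}) verifies condition~(c) of \cite[Lemma~5.1.5]{SolAHA}: for every $u\in T_\un$, the map $\zeta^\vee$ is a bijection between the Grothendieck groups of tempered modules with central character in $W\Gamma u T_{\rs}$ on the two sides. That lemma is set up precisely so that condition~(c) implies condition~(a), which is the desired K-theoretic isomorphism. So the filtrations, six-term sequences, and compatibility of intertwiners that you are trying to assemble by hand were already packaged inside \cite[Lemma~5.1.5]{SolAHA}; the only new input needed here is the \emph{integral} representation-ring bijection of Theorem~\ref{thm:1.5}, which is exactly what upgrades the rational result \cite[Theorem~5.1.4]{SolAHA} to an integral one.
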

\begin{proof}
Let $u \in T_\un$. Then \eqref{eq:1.19} says that 
$\zeta^\vee$ provides a bijection between the Grothendieck group of finite length 
$C_r^* (\mc R,q) \rtimes \Gamma$-modules with $Z(\mc H (\mc R,q) \rtimes \Gamma)$-character 
in $W \Gamma u T_{rs}$ and the analogous group for $C_r^* (X \rtimes W) \rtimes \Gamma$.
For tempered modules $\zeta^\vee$ agrees with the map $\zeta^*$ from \cite[\S 2.3]{SolAHA}.

These $C^*$-completions have the same irreducible representations as the respective
Schwartz completions of these algebras (see \cite[\S 6]{Opd-Sp} or \cite[\S 3.2]{SolAHA}),
namely the irreducible tempered representations of the underlying affine Hecke algebras.
That follows from the comparison of Theorem \ref{thm:2.1} with its analogue for Schwartz
completions \cite[Theorem 3.2.2]{SolAHA}. With these translation steps we see that part (c) 
of \cite[Lemma 5.1.5]{SolAHA} holds. Then \cite[Lemma 5.1.5]{SolAHA} tells us
that also its part (a) holds, which is the statement of the theorem. 
\end{proof}

When we want to compute $K_* (C_r^* (\mc R,q) \rtimes \Gamma)$, we can use Theorem
\ref{thm:2.2} to replace $q$ by 1, then apply it another time to replace 1 by any
positive parameter function $q'$ we like. We will do the actual computation either when
$q=1$ or when $q$ is generic among all possible parameter functions.

In Section 3 we will encounter many root data $\mc R$ which are a product of root data
$\mc R_1$ and $\mc R_2$. If $\Gamma_i$ is a group acting on $\mc R_i$ in the usual
way, then $\Gamma := \Gamma_1 \times \Gamma_2$ acts on $\mc R$. In this case 
$C_r^* (\mc R,q) \rtimes \Gamma$ is defined as an algebra of bounded linear operators on 
\[
L^2 (\mc R) \otimes \C [\Gamma] = L^2 (\mc R_1) \otimes \C[\Gamma_1] \otimes
L^2 (\mc R_2) \otimes \C[\Gamma_2] .
\]
It is the closure of the algebraic tensor product of $C_r^* (\mc R_1,q_1) \rtimes \Gamma_1$
and $C_r^* (\mc R_2,q_2) \rtimes \Gamma_2$ in $B \big( L^2 (\mc R) \otimes \C [\Gamma] \big)$,
which means that
\begin{equation}
C_r^* (\mc R,q) \rtimes \Gamma = C_r^* (\mc R_1,q_1) \rtimes \Gamma_1 \otimes_{\mr{min}}
C_r^* (\mc R_2,q_2) \rtimes \Gamma_2 ,
\end{equation}
the minimal tensor product of $C^*$-algebras. These $C^*$-algebras are separable and of type I,
so the paper \cite{Scho} applies to them. The K\"unneth Theorem \cite{Scho} says that there
exists a natural $\Z / 2 \Z$-graded short exact sequence
\begin{multline}\label{eq:2.10}
0 \to K_* ( C_r^* (\mc R_1,q_1) \rtimes \Gamma_1 ) \otimes_\Z K_* ( C_r^* (\mc R_2,q_2) \rtimes 
\Gamma_2 ) \to K_* (C_r^* (\mc R,q) \rtimes \Gamma) \to \\
\mr{Tor}_\Z \big( K_* ( C_r^* (\mc R_1,q_1) \rtimes \Gamma_1 ), 
K_* ( C_r^* (\mc R_2,q_2) \rtimes \Gamma_2 ) \big) \to 0 .
\end{multline}
In particular, this becomes an isomorphism
\[
K_* ( C_r^* (\mc R_1,q_1) \rtimes \Gamma_1 ) \otimes_\Z K_* ( C_r^* (\mc R_2,q_2) \rtimes 
\Gamma_2 ) \isom K_* (C_r^* (\mc R,q) \rtimes \Gamma)  
\]
if $K_* ( C_r^* (\mc R_i,q_i) \rtimes \Gamma_i )$ has no torsion for $i =1,2$.
With \eqref{eq:2.10} we can often reduce the computation of K-groups to the case where $R$ 
is irreducible.

By \eqref{eq:2.2} and the Green--Julg Theorem \cite{Jul}
\[
K_* (C_r^* (\mc R,1) \rtimes \Gamma) = K_* (C(T_\un) \rtimes W\Gamma) \cong
K_*^{W\Gamma} (C(T_\un)) .
\]
Furthermore, by the equivariant Serre--Swan Theorem \cite[Theorem 2.3.1]{Phi}
\begin{equation}\label{eq:2.5}
K_*^{W\Gamma} (C(T_\un)) \cong K^*_{W \Gamma} (T_\un) . 
\end{equation}
Together with Theorem \ref{thm:2.1} we get
\begin{equation}\label{eq:2.6}
K_* (C_r^* (\mc R,q) \rtimes \Gamma) \cong K^*_{W\Gamma}(T_\un) . 
\end{equation}
The right hand side in \eqref{eq:2.5} and \eqref{eq:2.6} is just Atiyah's 
$W\Gamma$-equivariant K-theory of the compact Hausdorff space $T_\un$. 
Let $T_\un /\!/ W\Gamma$ be the extended quotient (see also Paragraph \ref{par:crossed}).
We recall from \cite[Theorem 1.19]{BaCo} that the equivariant 
Chern character gives a natural isomorphism
\begin{equation}\label{eq:2.7}
K^*_{W\Gamma} (T_\un) \otimes_\Z \C \isom H^* \big( T_\un /\!/ W\Gamma; \C \big) . 
\end{equation}
(Here $H^*$ could be many cohomology theories, in this paper we stick to \u{C}ech
cohomology.) With \eqref{eq:2.5} we find a canonical isomorphism
\begin{equation}\label{eq:2.12}
K_* (C_r^* (\mc R,q) \rtimes \Gamma) \otimes_\Z \C \cong
H^* ( T_\un /\!/ W\Gamma; \C) .
\end{equation}
In \eqref{eq:2.7} it is essential to use complex coefficients, so this does not 
tell us much about the torsion in $K_* (C_r^* (\mc R,q) \rtimes \Gamma)$. To study
the torsion elements better, we will compare the topological K-theory of relevant
$C^*$-algebras with a suitable version of equivariant cohomology from
\cite{Bre}. Let $\Sigma$ be a countable, locally finite and finite 
dimensional $G$-CW complex, where $G$ is a finite group.
Assume that all cells are oriented and that the action of $G$ 
preserves these orientations.

We define a category $\mc K$ whose objects are the finite 
subcomplexes of $\Sigma$. The morphisms from $K$ to $K'$ are the 
maps $K \to K' : x \to gx$ for $g \in G$ such that 
$gK \subset K'$. Now a local coefficient system on $\Sigma$ is a covariant functor 
from $\mc K$ to the category of abelian groups, and the group
$C^q (\Sigma;\mf L)$ of $q$-cochains is the set of all functions
$f$ on the $q$-cells of $\Sigma$ with the property that 
$f(\tau) \in \mf L(\tau)$ for all $\tau$.
Furthermore we define a coboundary map $\mr{d} : C^q (\Sigma 
;\mf L ) \to C^{q+1} (\Sigma ;\mf L )$ by
\begin{equation}\label{eq:2.53}
(\mr{d} f) (\sigma) = \sum\nolimits_{\tau \in \Sigma^{(q)}} [\tau : \sigma ] 
\mf L(\tau \to \sigma) f(\tau)
\end{equation}
where the sum runs over the set $\Sigma^{(q)}$ of all $q$-cells and the incidence
number $[\tau : \sigma ]$ is the degree of the attaching map 
from $\partial \sigma$ (the boundary of $\sigma$ in the 
standard topological sense) to $\tau / \partial \tau$. 
The group $G$ acts naturally on this complex by cochain maps so, 
for any $K \subset \Sigma \,,\, \left( C^* (K;\mf L )^G  ,\mr{d} \right)$ 
is a differential complex. We define 
the equivariant cohomology of $K$ with coefficients in $\mf L$ as
\begin{equation}
H^q_G (K;\mf L) := H^q \left( C^* (K;\mf L)^G ,\mr{d} \right)
\end{equation}
More generally for $K' \subset K ,\: C^* (K, K'; \mf L )$ 
is the kernel of the restriction map $C^* (K; \mf L) \to C^*(K'; \mf L)$ and 
\begin{equation}
H^q_G (K,K';\mf L ) = 
H^q \left( C^* (K,K';\mf L )^G ,\mr{d} \right)
\end{equation}
By construction there exists a local coefficient 
system $\mf L^G$ (more or less consisting of the $G$-invariant 
elements of $\mf L$) on the CW complex $\Sigma / G$ such that 
the differential complexes $(C^* (K,K';\mf L )^G ,\mr{d} )$ and
$(C^* (K/G, K'/G; \mf L^G ), \mr{d} )$ are isomorphic. 
Notice that $\mf L^G$ defines a sheaf over $\Sigma / G$ (with
the cells as cover), such that
\begin{equation}\label{eq:2.54}
H^q_G (K,K';\mf L ) \cong \check H^q \left( K/G, K'/G; \mf L^G \right) .
\end{equation}
Let $\Sigma^p$ be the $p$-skeleton of $\Sigma$. We capture all 
the above things in a spectral sequence $(E_r^{p,q})_{r \geq 1}$, 
degenerating already for $r \geq 2$, as follows: 
\begin{align}
E_1^{p,q} = H^{p+q}_G (\Sigma^p ,\Sigma^{p-1} ;\mf L ) &= 
 \left\{ \begin{array}{l@{\;\;\quad \mr{if} \quad}l}
   C^p (\Sigma ; \mf L )^G & q = 0 \\ 
   0                       & q > 0 
 \end{array} \right. \label{eq:2.55} \\
E_2^{p,q} &= \left\{ \begin{array}{l@{\qquad \mr{if} \quad}l}
   H^p_G (\Sigma; \mf L ) & q = 0 \\ 
   0                      & q > 0 
 \end{array} \right. \label{eq:2.56}
\end{align}
The differential $d_1^E$ is the composition
\begin{equation}\label{eq:2.57}
E_1^{p,q} \to C^{p+q} (\Sigma^p ; \mf L )^G \to E_1^{p+1,q}
\end{equation}
of the maps induced by the inclusion $(\Sigma^p ,\emptyset ) \to 
(\Sigma^p ,\Sigma^{p-1})$ and the coboundary d.

We are mostly interested in this cohomology theory for a particular
coefficient system, which we now define. Consider the Fr\'echet algebra
\begin{equation}\label{eq:2.15}
B = C(\Sigma ; M_N (\mh C )) = M_N (C (\Sigma )) .
\end{equation}
(It is a $C^*$-algebra if $\Sigma$ is compact.) 
We assume that we have $u_g \in B^\times$ such that 
\begin{equation}\label{eq:2.16}
g b (x) = u_g (x) b (g^{-1} x) u_g^{-1}(x)
\end{equation}
defines an action of $G$ on $B$. Then the invariants $B^G$ constitute
a Fr\'echet subalgebra of $B$. Notice that
by \eqref{eq:2.1} and \eqref{eq:2.4} the $C^*$-completion of an 
affine Hecke algebra is a direct sum of algebras of this form.

To associate a local coefficient system to $B^G$. 
we first assume that $K$ is connected. In that case we let
\begin{equation}\label{eq:2.62}
G_K := \{ g \in G : g x = x \quad \forall x \in K \}
\end{equation}
be the isotropy group of $K$ and we define $\mf L_u (K)$ 
to be the free abelian group on the (equivalence classes of) 
irreducible projective $G_K$-representations contained in 
$(\pi_x, \mh C^N)$, where $\pi_x (g) = u_g (x)$ for $g \in G_K , 
x \in K$. By the continuity of the $u_g$ we get the same group 
for any $x \in K$. If $K$ is not connected, then we let 
$\{K_i \}_i$ be its connected components, and we define
\begin{equation}\label{eq:2.63}
\mf L_u (K) = \prod\nolimits_i \mf L_u (K_i)
\end{equation}
Suppose that $g K \subset K'$ and that $\rho$ is a projective $G_K$-representation. 
Then we define a projective $G_{K'}$-representation by
\begin{equation}\label{eq:2.64}
\mf L_u (g: K \to K') \rho (g') = \rho (g^{-1} g' g) \quad g' \in G_{K'}
\end{equation}
If $h \in G$ gives the same map from $K$ to $K'$ as $g$ then 
$h^{-1} g \in G_K$ and
\begin{equation}
\mf L_u (h: K \to K') \rho (g') = \rho (h^{-1} g' h) =
\rho (h^{-1} g) \rho  (g^{-1} g' g) \rho (g^{-1} h)
\end{equation}
so $\mf L_u (h: K \to K') \rho$ is isomorphic to 
$\mf L_u (g: K \to K') \rho$ as a projective representation. 
This makes $\mf L_u$ into a functor.
We can regard $\mf{L}_u$ as a sheaf on $\Sigma$, where a section $s$ is continuous
on $U$ if and only if $s(K) |_{G_{K'}} = s(K')$ for every inclusion
$K \subset K' \subset U$.

\begin{ex}
Suppose that $u_g (x) = 1$ for all $x \in \Sigma, 
g \in G$. Then $\mf L_u$ and $\mf L_u^G$ are the constant 
sheaves $\mh Z$ over $\Sigma$ and $\Sigma/G$ respectively, and 
\begin{equation}
H^*_G (\Sigma;\mf L_u) \cong \check H^*(\Sigma/G; \mh Z)
\end{equation}
is the ordinary cellular cohomology of $\Sigma/G$. Furthermore
\[
K_* ( B^G ) \cong K_* \big( C(\Sigma /G; M_N (\mh C )) \big) = K_* ( C(\Sigma / G)) ,
\]  
which is isomorphic to $\check H^* (\Sigma /G ;\mh Z )$ modulo torsion.
\end{ex}

It turns out that a relation like \eqref{eq:2.7}, between 
$K_* \left( B^G \right)$ and the \v Cech cohomology 
$H^* \left( \Sigma /G;\mf L_u^G \right)$ is valid in the generality of the algebras
$B^G$ from \eqref{eq:2.15} and \eqref{eq:2.16}. Notice that we do not require $\Sigma$
to be compact, we consider the K-theory of $B^G$ as a Fr\'echet algebra.
The skeleton of the CW complex $\Sigma$ gives rise to the following filtration:
\begin{equation}\label{eq:2.58} 
\begin{split}
&K_* \left( B^G \right) = K_*^0 \left( B^G \right) \supset 
K_*^1 \left( B^G \right) \supset \cdots \supset 
K_*^{\dim \Sigma} \left( B^G \right) \supset 
K_*^{1 + \dim \Sigma} \left( B^G \right) = 0 \\
&K_*^p \left( B^G \right) := \text{im}
\Big( K_* \big( C_0(\Sigma / \Sigma^{p-1} ;M_N (\mh C ) )^G \big) \to 
K_* \big( C(\Sigma; M_N(\mh C ) )^G \big) \Big) .
\end{split}
\end{equation}

\begin{thm}\label{thm:2.15}
The graded group associated with the filtration \eqref{eq:2.58}
is isomorphic to $\check H^* \left( \Sigma/G ; \mf L_u^G \right)$.
In particular there is an (unnatural) isomorphism
\begin{equation}\label{eq:2.85}
K_* \left( B^G \right) \otimes \mh Q \cong \check H^* 
\left( \Sigma /G; \mf L_u^G \otimes \mh Q \right)
\end{equation}
and
\[
K_* \left( B^G \right) \cong 
\check H^* \left( \Sigma /G; \mf L_u^G \right)
\]
if the right hand side is torsion free.
\end{thm}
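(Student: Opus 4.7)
My plan is to construct a cohomological spectral sequence converging to the associated graded of \eqref{eq:2.58}, identify its $E_2$-page with $\check H^*(\Sigma/G; \mf L_u^G)$, and argue that it collapses. The filtration \eqref{eq:2.58} arises from the chain of closed ideals $J_p := C_0(\Sigma / \Sigma^{p-1}; M_N(\C))^G$ in $B^G$, with $J_0 = B^G$ and $J_{\dim \Sigma + 1} = 0$, using that taking $G$-invariants is exact on this family. The short exact sequences
\[
0 \to J_{p+1} \to J_p \to J_p/J_{p+1} \to 0
\]
give six-term exact sequences in topological K-theory that assemble into a standard exact couple, producing the desired Atiyah--Hirzebruch type spectral sequence.

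For the $E_1$-page I use that $\Sigma^p \setminus \Sigma^{p-1}$ is a disjoint union of open $p$-cells permuted by $G$, whence
\[
J_p / J_{p+1} \cong \bigoplus_{[\sigma] \in \Sigma^{(p)}/G} C_0(\sigma; M_N(\C))^{G_\sigma}.
\]
For each orbit representative $\sigma \cong \R^p$, the stabilizer $G_\sigma$ fixes $\sigma$ pointwise by the orientation-preservation hypothesis, so it acts only on the coefficients via the projective representation $\pi_x = u_{(\cdot)}(x)$. Decomposing $\C^N$ into projective $G_\sigma$-isotypic pieces gives $M_N(\C)^{G_\sigma} \cong \bigoplus_\rho M_{m_\rho}(\C)$, where $\rho$ runs over the irreducible projective $G_\sigma$-representations occurring in $\pi_x$. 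By Bott periodicity, $K_*(C_0(\sigma) \otimes M_{m_\rho}(\C)) \cong \Z$ is concentrated in the parity $* \equiv p \pmod 2$, so summing over $\rho$ and over orbits produces $E_1^{p,q} \cong C^p(\Sigma; \mf L_u)^G$ (with the appropriate $\Z/2$-grading). The differential $d_1$ is the connecting map in the six-term sequence; a cell-by-cell analysis using the attaching maps identifies it with the coboundary \eqref{eq:2.53}, the incidence number $[\tau : \sigma]$ appearing from the degree of the attaching map on the top K-theory class of the cell, and the restriction $\mf L_u(\tau \to \sigma)$ arising because a section over $\tau$ restricts across $\partial \tau$ by restricting the associated projective $G_\tau$-representation to $G_\sigma$. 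Combined with \eqref{eq:2.54} this gives $E_2 \cong \check H^*(\Sigma/G; \mf L_u^G)$.

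The main obstacle is showing that the spectral sequence degenerates at $E_2$, i.e.\ that all higher differentials $d_r$ with $r \geq 2$ vanish. Rationally this is already contained in \eqref{eq:2.7}: the equivariant Chern character provides a rational splitting that forces all rational higher differentials to vanish. For the integral statement I would follow the strategy of \cite{Slo1}, constructing a refined Chern character valued in $\mf L_u^G$-coefficient cohomology that lifts the equivariant one and splits the edge map on each filtration quotient; the key input is the functoriality of $\mf L_u$ under restriction of projective representations, which makes the candidate Chern character compatible with the skeletal filtration, so that the higher differentials factor through maps that are killed by the splitting. Once collapse is established, the graded of \eqref{eq:2.58} equals $\check H^*(\Sigma/G; \mf L_u^G)$. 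The rational isomorphism then follows because every extension of $\Q$-vector spaces splits, and the torsion-free isomorphism follows because no nontrivial extensions arise when reassembling a torsion-free graded abelian group.
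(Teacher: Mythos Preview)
Your setup—building the spectral sequence from the ideals $J_p=C_0(\Sigma\setminus\Sigma^{p-1};M_N(\C))^G$, computing $E_1$ cell by cell via Bott periodicity to get $E_1^{p,0}\cong C^p(\Sigma;\mf L_u)^G$ and $E_1^{p,1}=0$, and identifying $d_1$ with the coboundary \eqref{eq:2.53}—matches the paper's argument, and both arrive at $E_2\cong\check H^*(\Sigma/G;\mf L_u^G)$ the same way.

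The two approaches diverge at the collapse step. The paper uses no Chern character. It introduces the auxiliary cohomological spectral sequence \eqref{eq:2.55}--\eqref{eq:2.57}, obtained by applying the same skeletal filtration to the cochain complex $(C^*(\Sigma;\mf L_u)^G,\mr d)$; this has $E_1^{p,q}=0$ for every $q\neq 0$ and therefore collapses at $E_2$ for the trivial reason that each higher differential lands in a zero group. The paper then matches $F_1^{p,q}$ with $\prod_n E_1^{p,q+2n}$ and $d_1^F$ with $d_1^E$ (``the noncommutative counterpart of \ldots cellular cohomology, so by naturality''), and from that alone concludes the two spectral sequences are isomorphic, hence $F$ collapses too. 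What you call the main obstacle the paper dispatches in a paragraph by this structural comparison; there is no splitting map and no appeal to \cite{Slo1}.

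The gap in your proposal is the integral collapse. Invoking ``the strategy of \cite{Slo1}'' and listing compatibility properties of $\mf L_u$ does not actually construct the claimed integral Chern-type map splitting the filtration, and there is a genuine obstruction: specializing to $G=1$, $u_g=1$, the statement becomes collapse at $E_2$ of the ordinary Atiyah--Hirzebruch spectral sequence for $K^*(\Sigma)$, and an integral splitting of that filtration does not exist for general CW complexes (the first potential differential is $d_3=\beta\circ Sq^2\circ r$). So the section you sketch cannot be produced at the level of generality you claim. Your rational argument via \eqref{eq:2.7} is correct and in fact already delivers the two displayed consequences: once $d_r\otimes\Q=0$ for $r\geq 2$, a torsion-free $E_2$ forces every integral $d_r$ to have torsion image inside a torsion-free target, hence to vanish, after which the free abelian graded pieces reassemble uniquely. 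Thus you do recover \eqref{eq:2.85} and the torsion-free isomorphism, by a route that is longer and relies on external input where the paper's argument stays internal to the two spectral sequences.
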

\begin{proof}
For $p,r \geq 0$ we set $K(p,p+r) = K_* \big(C_0 (\Sigma^{p+r-1} / \Sigma^{p-1}; 
M_N (\C) )^G \big)$. When $p' \geq p$ and $p'+r' \geq p+r$, the map
\[
(\Sigma^{p+r-1},\Sigma^{p-1}) \to (\Sigma^{p'+r'-1},\Sigma^{p'-1})
\]
induces a group homomorphism $K(p',p'+r') \to K(p,p+r)$.
For any $s \geq 0$ the sequence 
\begin{equation}\label{eq:2.18}
(\Sigma^{p+r-1} , \Sigma^{p-1}) \to (\Sigma^{p+r+s-1},\Sigma^{p-1}) 
\to (\Sigma^{p+r+s-1},\Sigma^{p+r-1}) 
\end{equation}
gives rise to a connecting homomorphism $K(p,p+r) \to K(p+r,p+r+s)$.
Using \cite[Section XV.7]{CaEi} we construct a spectral sequence
$(F_r^p )_{r \geq 1}$ with terms:
\begin{equation}\label{eq:2.17}
\begin{array}{lllll}
F_1^p & = & K(p,p+1) / K(p,p) & = & K_* \left( C_0 (\Sigma^p / \Sigma^{p-1}; M_N
(\mh C ) )^G \right) , \\
F_\infty^p & = & K(p,\infty) / K(p+1,\infty) & = &
K_*^p \left( B^G \right) / K_*^{p+1} \left(B^G \right) .
\end{array}
\end{equation}
The entire setting is $\Z / 2\Z$-graded by the K-degree. We put 
\[
K^q(p,p+r) = 
K_{p+q} \big(C_0 (\Sigma^{p+r-1} / \Sigma^{p-1}; M_N (\C) )^G \big)
\]
and we refine \eqref{eq:2.17} to
\begin{equation}
\begin{aligned}
F_1^{p,q} &= K_{p+q} \left( C_0 (\Sigma^p / \Sigma^{p-1}; M_N
(\mh C ) )^G \right) , \\
F_\infty^{p,q} &= K_{p+q}^p \left( B^G \right) / 
K_{p+q}^{p+1} \left(B^G \right) .
\end{aligned}
\end{equation}
By the definition of a $G$-CW complex, the pointwise stabilizer of a $p$-cell 
$\sigma$ is equal to its setwise stabilizer in $G$. Consequently
\[
C_0 (\Sigma^p / \Sigma^{p-1}; M_N (\mh C ) )^G \cong
\prod\nolimits_{\sigma \in \Sigma^{(p)} / G} C_0 (\R^p) \otimes M_N (\C)^{G_\sigma}
\]
and $F_1^{p,1} = 0$. From Bott periodicity and the definition of $\mf{L}_u$ in 
\eqref{eq:2.63} we see that
\[
F_1^{p,0} \cong \prod_{\sigma \in \Sigma^{(p)} / G} \mf{L}_u (\sigma) \cong
\big( \prod_{\sigma \in \Sigma^{(p)}} \mf{L}_u (\sigma) \big)^G .
\]
Now replace $\mf L$ in \eqref{eq:2.55} by $\mf L_u$ and sum over
all $q$ to obtain $E_r^p$. If we compare the result with $F_1^p = F_1^{p,0} \oplus
F_1^{p,1}$ we see that $E_1^p \cong F_1^p$. So we get a diagram
\begin{equation}\label{eq:2.61}
\begin{array}{ccc}
F_1^{p,q} & \xrightarrow{\quad d_1^F \quad} & F_1^{p+1,q} \\[1mm]
\cong & & \cong \\
\prod_{n \in \mh Z} E_1^{p,q+2n} & \xrightarrow{\quad 
  d_1^E \quad} & \prod_{n \in \mh Z} E_1^{p+1,q+2n}
\end{array}
\end{equation}
The differential $d_1^F$ for $F_1^*$ is induced from the 
construction of a mapping cone of a Puppe sequence in the 
category of $C^*$-algebras, coming from \eqref{eq:2.18}. 
This is the noncommutative counterpart of the construction of 
the differential in cellular cohomology, so by naturality $d_1^F$ 
corresponds to $d_1^E$ under the above isomorphism. Therefore 
the spectral sequences $E_r^p$ and $F_r^p$ are isomorphic, and
in particular $F_r^p$ degenerates for $r \geq 2$. Now the 
isomorphism \eqref{eq:2.85} follows from \eqref{eq:2.54}.

If $\check H^* \left( \Sigma /G; \mf L_u^G \right)$ is torsion free, then every term 
$E_\infty^p \cong F_\infty^p$ must be torsion free. 
Hence in this case both $K_* \left( B^G \right)$ and 
$\check H^* \left( \Sigma /G; \mf L_u^G \right)$ are free 
abelian groups, of the same rank. 
\end{proof}

Theorem \ref{thm:2.15} allows us to reduce the computations of $K_* (C_r^* (\mc R,q))$
to \v Cech cohomology, where a lot of tools are available. For several root data it
is easiest to look at the case $q=1$, for which we will develop more machinery in the
next paragraph.
For some other root data (in particular of type $PGL_n$) it is more convenient to study 
$K_* (C_r^* (\mc R,q))$ with $q \neq 1$, for then there are fewer possibilities for 
torsion elements, compared to $q=1$. In those cases we need the full force of 
Theorem \ref{thm:2.15}.

\subsection{Crossed products} \
\label{par:crossed}

In the special case of crossed products the technique from Theorem \ref{thm:2.15}
can be improved. A crucial role will be played by the extended quotient, whose
definition we recall now. Let $G$ be a finite group $G$ acting on a topological space 
$\Sigma$. We define
\[
\widetilde{\Sigma} = \{ (g,t) \in G \times T_\un : g (t) = t \} ,
\]
a closed subset of the topological space $G \times \Sigma$.
The group $G$ acts on $\widetilde{\Sigma}$ by
\[
g (g',t) = (g g' g^{-1},g(t)) . 
\]
The (geometric) extended quotient of $\Sigma$ by $G$ is defined as
\begin{equation}\label{eq:2.11}
\Sigma /\!/ G = \widetilde{\Sigma} / G .
\end{equation}
It decomposes as
\begin{equation}\label{eq:2.13}
\Sigma /\!/ G = \bigsqcup\nolimits_{g \in cc (G)} \Sigma^w / Z_G (g) ,
\end{equation}
where $cc(G)$ denotes a set of representatives for the conjugacy classes in $G$. 

We will develop a method that allows one to pass from the $G$-equivariant
K-theory of $\Sigma$ to the integral cohomology of $\Sigma /\!/ G$. However,
it does not work automatically, we require that the cohomology is torsion-free
and that all $G$-isotropy groups of points of $\Sigma$ are Weyl groups (and it
uses some of our earlier results on the representation rings of Weyl groups).

From now on we assume that $\Sigma$ is a smooth manifold (possibly
with boundary) on which $G$ acts smoothly. According to \cite{Ill} $\Sigma$ 
also admits the structure of a countable, locally finite, finite dimensional 
$G$-simplicial complex.
The crossed product $C(\Sigma) \rtimes G$ fits in the framework of
\eqref{eq:2.15} and \eqref{eq:2.16} by the isomorphisms
\begin{equation}\label{eq:2.14}
C(\Sigma ) \rtimes G \cong C \big( \Sigma ; \End_\C (\C[G]) \big)^G = B^G .
\end{equation}
In this case $u_g (x)$ is right multiplication by $g^{-1}$ and $\pi_x$ is the 
direct sum of $[G:G_x]$ copies
of the regular representation of $G_x$. It is not hard to see that
$\mf L_u^G \otimes_\Z \C$ is isomorphic to the direct image of the 
constant sheaf $\C$ on $\widetilde \Sigma$, under the 
canonical map $pr: \widetilde \Sigma / G \to \Sigma / G$. 
Since $pr$ is finite to one there are no topological complications, 
and we get an isomorphism
\begin{equation}\label{eq:2.65}
H^*_G (\Sigma;\mf L_u \otimes \C) \cong \check H^* (\Sigma / G; \mf L_u^G \otimes_\Z \C)
\cong \check H^* \big( \widetilde \Sigma /G; \C \big)
\end{equation}
From this one can recover \eqref{eq:2.7}. Unfortunately this approach does not
automatically lead to an isomorphism between $\check H^* (\Sigma /G;\mf L_u^G)$ and
$\check H^* (\widetilde \Sigma / G; \Z)$, for $\mf L_u^G$ need not be isomorphic 
to the direct image of the constant sheaf $\Z$ under $pr$. 

Sometimes this can be approached better via a dual homology theory.
Let $C_q (\Sigma ; \mf{L}_u)$ be the subgroup of $C^q (\Sigma;\mf{L}_u)$ consisting
of functions supported on finitely many $q$-cells.
The graded $\Z$-module $C_* (\Sigma ; \mf{L}_u)$ admits a $G$-equivariant boundary map,
which in the notation of \eqref{eq:2.53} can be written as
\begin{align*}
& \partial : C^{q+1} (\Sigma ; \mf{L}_u) \to C^q (\Sigma ; \mf{L}_u) ,\\
& (\partial f) (\tau) = \sum\nolimits_{\sigma \in \Sigma^{(q+1)}} 
[\tau:\sigma] \, \ind_{G_\sigma}^{G_\tau} (f(\sigma)) .
\end{align*}
This is a natural perfect pairing on each $\mf{L}_u (\sigma) \cong R_\Z (G_\sigma)$,
since $G_\sigma$ is a finite group. With that one sees that the differential complex 
$(C^* (\Sigma ; \mf{L}_u),\mr{d})$ is isomorphic to
$\Hom_\Z \big( (C^* (\Sigma ; \mf{L}_u), \partial) , \Z \big)$. This persists to the
$G$-invariants:
\begin{equation}\label{eq:2.19}
\big( C^* (\Sigma / G ; \mf{L}_u^G ),\mr{d} \big) \cong 
\Hom_\Z \big( (C^* (\Sigma / G; \mf{L}_u^G ), \partial ) , \Z \big) .
\end{equation}
Suppose now that $\Sigma$ is a manifold on which the finite group $G$ acts smoothly.
For $t \in \Sigma$ the isotropy $G_t$ acts $\R$-linearly on the tangent space 
$T_t (\Sigma)$. We say that $G_t$ is a Weyl group if it is the Weyl group of some
root system in $T_t (\Sigma)$. 

\begin{thm}\label{thm:2.3}
Let $G$ be a finite group acting smoothly on a manifold $\Sigma$. 
\enuma{
\item Suppose that $G_t$ is a Weyl group for all $t \in \Sigma$. Then 
\[
H_i \big( C_* (\Sigma / G; \mf{L}_u^G ), \partial \big) \cong H_i (\Sigma /\!/ G; \Z)
\text{ for all } i \in \Z_{\geq 0} .
\]
\item Suppose that the conclusion of part (a) holds, and that  
$H^* (\Sigma /\!/ G;\Z)$ is torsion-free. Then
\[
K_* (C(\Sigma) \rtimes G) \cong H^* (\Sigma /\!/ G; \Z) . 
\]
}
\end{thm}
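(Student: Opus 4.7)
The plan is to prove part (a) by exhibiting an isomorphism of chain complexes between $(C_*(\Sigma/G; \mf{L}_u^G), \partial)$ and the cellular chain complex of $\Sigma /\!/ G$ with $\Z$-coefficients, and then to deduce part (b) by combining this with Theorem \ref{thm:2.15} and the identification $C(\Sigma) \rtimes G \cong B^G$ from \eqref{eq:2.14}.

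For part (a), I would first fix a $G$-CW structure on $\Sigma$ refining the isotropy stratification (using Illman's theorem \cite{Ill} and further $G$-equivariant subdivision), so that each cell $\sigma$ has constant pointwise stabilizer $G_\sigma$ in its relative interior, and so that for every face $\tau$ of $\sigma$ the inclusion $G_\sigma \subset G_\tau$ arises via the slice theorem as the stabilizer of a tangent direction under the $G_\tau$-action on $T_y\Sigma$ at some $y$ in the interior of $\tau$. The hypothesis that every $G_t$ is a Weyl group, together with this slice-theoretic description, should then force $G_\sigma$ to appear as a parabolic subgroup of $G_\tau$. Granting this, Proposition \ref{prop:1.8}(b) applied to $W = G_\sigma$ furnishes a canonical $\Z$-basis of $\mf{L}_u^G(\sigma) = R_\Z(G_\sigma)$ indexed by conjugacy classes $[w] \in \mc{C}(G_\sigma)$. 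Consequently a basis of $C_n(\Sigma/G; \mf{L}_u^G)$ is given by pairs $(\sigma, [w])$ with $\sigma$ an $n$-cell of $\Sigma/G$ and $[w] \in \mc{C}(G_\sigma)$, which is precisely the data parametrising $n$-cells of the extended quotient $\Sigma /\!/ G = \widetilde{\Sigma}/G$ from \eqref{eq:2.11}. So the underlying graded groups match. To see that the differentials match, observe that Proposition \ref{prop:1.8}(b) is tailored so that parabolic induction sends basis elements to basis elements: if $G_\sigma = W_P \subset G_\tau$ is parabolic, then $\ind_{G_\sigma}^{G_\tau}$ carries the basis element indexed by $[w]_{G_\sigma}$ to the basis element of $R_\Z(G_\tau)$ indexed by the $G_\tau$-conjugacy class of $w$. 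This is exactly the behaviour of the cellular boundary $(w,\sigma) \mapsto \sum_\tau [\sigma:\tau](w,\tau)$ on $\Sigma /\!/ G$, and part (a) follows.

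For part (b), an isomorphism of chain complexes of free $\Z$-modules dualises, via \eqref{eq:2.19}, to an isomorphism of cochain complexes, yielding
\[
\check{H}^*(\Sigma/G; \mf{L}_u^G) \cong H^*(\Sigma /\!/ G; \Z),
\]
which is torsion-free by assumption. Taking $B = C(\Sigma; M_{|G|}(\C))$ with the $u_g$ of \eqref{eq:2.14}, so that $B^G = C(\Sigma) \rtimes G$, Theorem \ref{thm:2.15} then gives the isomorphism $K_*(C(\Sigma) \rtimes G) \cong H^*(\Sigma /\!/ G; \Z)$.

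The main obstacle is the geometric input in part (a): verifying that the Weyl group hypothesis, together with the slice theorem, indeed forces each inclusion $G_\sigma \subset G_\tau$ coming from an adapted $G$-CW structure to be a \emph{parabolic} inclusion of Weyl groups. Without this, the naturality of the Proposition \ref{prop:1.8}(b) basis under induction breaks down, and the identification of the two chain complexes cannot be made directly. All remaining work --- dualising, checking the match with cells of $\widetilde{\Sigma}/G$, and invoking Theorem \ref{thm:2.15} --- is essentially bookkeeping once part (a) is established.
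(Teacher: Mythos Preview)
Your approach is essentially correct and close in spirit to the paper's, but the paper organizes the argument differently, and that reorganization is what cleanly dissolves the coherence issue you are glossing over.

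Rather than fixing bases of $R_\Z(G_\sigma)$ cell by cell, the paper works one conjugacy class $g \in cc(G)$ at a time. For each connected component $\Sigma_i^g$ of $\Sigma^g$, it takes the \emph{generic} (minimal) stabilizer $W_i = G_{t_i}$ on that component, observes that $g$ is elliptic in $W_i$, chooses $(u_g,\rho_g)$ from Proposition~\ref{prop:1.8} once, and then for every cell $\tau \subset \Sigma_i^g$ sets $s(g,\tau) = \ind_{W_i}^{G_\tau}(H(u_g,\rho_g))$. Transitivity of induction makes these sections automatically compatible along all face relations, so the span of the $\tau \otimes s(g,\tau)$ is a sub-chain complex $C(g,\Sigma) \cong C_*(\Sigma^g/Z_G(g);\Z)$. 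Summing over $g$ gives the direct sum decomposition $C_*(\Sigma/G;\mf L_u^G) = \bigoplus_g C(g,\Sigma)$, and Proposition~\ref{prop:1.8}(b) applied at a single point $t$ certifies that $\{s(g,t)\}_g$ is a $\Z$-basis of $R_\Z(G_t)$.

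Your version needs more: you invoke Proposition~\ref{prop:1.8}(b) separately for each $G_\sigma$ and then assert that $\ind_{G_\sigma}^{G_\tau}$ takes basis elements to basis elements. That holds for a \emph{single} parabolic inclusion by the inductive construction in the proof of Proposition~\ref{prop:1.8}(b), but in the manifold you face an entire poset of inclusions $G_\sigma \subset G_\tau$, $G_\sigma \subset G_{\tau'}$, and you must choose the $(u_{P,w},\rho_{P,w})$ coherently across all of them simultaneously. This is possible, but it is exactly what the paper's ``choose at the minimal stabilizer and induce up'' accomplishes without further argument. You should either adopt that device or explain how the choices are globalized.

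The geometric obstacle you flag --- that $G_\sigma \subset G_\tau$ is parabolic --- is genuine and is handled (implicitly in the paper too) by the slice theorem: near $t \in \tau$, points of $\sigma$ correspond to vectors $v \in T_t\Sigma$, and $G_\sigma = (G_t)_v$ is the stabilizer of $v$ in the Weyl group $G_t$ acting on $T_t\Sigma$, hence parabolic. The ellipticity of $g$ in $W_i$ follows because $W_i$ is the generic stabilizer on the component, so $(T_{t_i}\Sigma)^g = (T_{t_i}\Sigma)^{W_i}$.

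For part (b): your dualization of a chain-complex isomorphism is fine given your stronger form of (a). The paper instead uses the Universal Coefficient Theorem to pass from the homology isomorphism (which is all that the statement of (a) provides) to cohomology; this is why the torsion-freeness hypothesis is needed there, and it makes (b) logically independent of how (a) was proved.
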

\begin{proof}
(a) For every subgroup $H \subset G$ the set of fixpoints $\Sigma^H$ is a submanifold
of $\Sigma$ \cite[Lemma 4.1]{BaCo}. It follows that for every $g \in cc(G)$ 
and every connected component $\Sigma_i^g$ 
of $\Sigma^g$ the map $t \mapsto G_t$ is constant on an open dense subset of $\Sigma_i^g$.
Pick a point $t_i$ in this dense subset of $\Sigma_i^g$ and write $G_{t_i} = W_i$.
By assumption $W_i$ is a Weyl group and $G_t \supset W_i$ for all $t \in \Sigma_i^g$.

For a cell $\tau$ and $t \in \tau \setminus \partial \tau$ we have $G_\tau = G_t$.
Using Proposition \ref{prop:1.3} we define, for $t \in \Sigma^g_i, 
t \in \tau \setminus \partial \tau$,
\begin{equation}\label{eq:2.20}
s(g,t) = s(g,\tau) = \ind_{W_i}^{G_\tau} (H(u_g,\rho_g )) .
\end{equation}
We may and will assume that $s(g,h \tau) = h \cdot s(g,\tau)$ for all $h \in Z_G (g)$.
This extends uniquely to a $G$-equivariant map $\Sigma^g \to
\bigcup_{\tau \subset \Sigma^g} R_\Z (G_\tau)$, and hence defines an element 
$s (g) \in C^* (\Sigma;\mf{L}_u^G)$. Thus $s(g)$ is nonzero at $Gt \in \Sigma / G$ if
and only if $Gt \cap \Sigma^g$ is nonempty. 

The $s(g)$ with $g \in cc(G)$ yield precisely one representation for each element of
the extended quotient 
\[
\Sigma /\!/ G = \bigsqcup\nolimits_{g \in cc(G)} \Sigma^g / Z_G (g) .
\]
So for every $t \in \Sigma$ we get exactly $|cc(G_t)| = |\Irr (G_t)|$ representations
$s(g,t)$. By Proposition \ref{prop:1.8} the $s(g,t)$ with $g \in cc(G)$ and
$t \in G \Sigma^g$ form a $\Z$-basis of the representation ring of the Weyl group $G_t$.
This also shows that for $t \in \tau \setminus \partial \tau$ the set
\begin{equation}\label{eq:2.21}
\{ h \cdot \ind_{W_i}^{G_\sigma} (H(u_g,\rho_g)) : 
\sigma \subset \Sigma^g, h \in G_\tau \backslash G, h \sigma = \tau \}
\end{equation}
is linearly independent in $R_\Z (G_t) = R_\Z (G_\tau)$.

Let $\tau \otimes s(g,\tau)$ with $\tau \subset \Sigma^g$ be the terms of which
$s(g)$ is made. Then \eqref{eq:2.20} entails that the span
of the $\tau \otimes s(g,\tau)$ forms a sub-chain complex $C(g,\Sigma)$ of
$(C_* (\Sigma / G; \mf{L}_u^G), \partial )$ and \eqref{eq:2.21} implies
that $C(g,\Sigma)$ is isomorphic to the cellular homology complex \\
$C_* (\Sigma^g / Z_G (g);\Z)$.
Since the $s(g,t)$ form a basis of $R_\Z (G_t)$ for every $t \in \Sigma$, 
\[
C_* (\Sigma / G; \mf{L}_u^G) = \bigoplus\nolimits_{g \in cc(G)} C(g,\Sigma) .
\]
The claim about the homology of $(C_* (\Sigma / G; \mf{L}_u^G), \partial )$ follows.\\
(b) In the absence of torsion, the Universal Coefficient Theorem says that the dual
of the homology of a different complex is naturally isomorphic to the cohomology 
of the dual complex. This gives the horizontal isomorphisms in the following commutative
diagram:
\begin{equation}\label{eq:2.22}
\begin{array}{ccc}
H^* (\Sigma /\!/ G; \Z) & \isom & \Hom_\Z (H_* (\Sigma /\!/ G; \Z), \Z) \\
\uparrow & & \downarrow \\
H^* (C^* (\Sigma / G; \mf{L}_u^G), \mr{d} ) & \isom & 
\Hom_\Z \big( H_* (C_* (\Sigma / G; \mf{L}_u^G), \partial ), \Z \big) .
\end{array} 
\end{equation}
By assumption the right vertical arrow is an isomorphism.
We define the left vertical arrow to be the isomorphism such that the diagram 
becomes commutative.
The lower left corner of \eqref{eq:2.22} is $H_G^* (\Sigma;\mf L)$, which by
Theorem \ref{thm:2.15} is isomorphic to $K_* (C(\Sigma) \rtimes G)$.
\end{proof}

Let us return to the case of $C(T_\un ) \rtimes W = C_r^* (W^e)$, where
$T_\un, W$ and $W^e$ come from a root datum $\mc R$. Then $W$ acts by algebraic
group automorphisms on the compact torus $T_\un$.  

\begin{cor}\label{cor:2.5}
Let $\mc R$ be the root datum of a reductive algebraic group with simply 
connected derived group, and assume that $H^* (T_\un /\!/ W ;\Z)$ is torsion-free. 
Then, for any positive parameter function $q$, 
\[
K_* (C_r^* (\mc R,q)) \cong H^* (T_\un /\!/ W ;\Z).
\]
\end{cor}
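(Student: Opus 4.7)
The plan is to reduce to the untwisted case $q=1$ via Theorem \ref{thm:2.2} and then apply Theorem \ref{thm:2.3}(b) with $\Sigma = T_\un$ and $G = W$. First, Theorem \ref{thm:2.2} (with $\Gamma = 1$) gives a canonical isomorphism
\[
K_* (C_r^* (\mc R,q)) \cong K_* (C_r^* (\mc R,1)) = K_* (C(T_\un) \rtimes W) ,
\]
so it suffices to establish the second isomorphism $K_* (C(T_\un) \rtimes W) \cong H^* (T_\un /\!/ W; \Z)$ under the stated hypotheses. The torus $T_\un$ is a compact smooth manifold on which $W$ acts by (real-analytic) group automorphisms, so the hypotheses on $\Sigma$ in Theorem \ref{thm:2.3} are in place. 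The torsion-freeness assumption on $H^* (T_\un /\!/ W; \Z)$ is exactly the second hypothesis of Theorem \ref{thm:2.3}(b).

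The remaining point, and the key step, is to verify the first hypothesis of Theorem \ref{thm:2.3}(b): that every isotropy group $W_t \ (t \in T_\un)$ is a Weyl group. For any $t \in T_\un$, the subset
\[
R_t = \{ \alpha \in R : \alpha(t) = 1 \}
\]
is a root subsystem of $R$, and its Weyl group $W(R_t)$ is always contained in $W_t$. The issue is the possibly strict inclusion $W(R_t) \subseteq W_t$. Here the hypothesis that the derived group of the reductive algebraic group associated with $\mc R$ is simply connected becomes crucial: by Steinberg's theorem on centralizers of semisimple elements, the centralizer $Z_G (t)$ of any semisimple $t$ in such a group $G$ is connected. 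Translating this via $W = N_G (T) / T$ and $W_t = Z_G(t) \cap N_G(T) / T$ gives the equality $W_t = W(R_t)$. Thus $W_t$ is in fact the Weyl group of the root system $R_t$ sitting inside the tangent space $T_t(T_\un)$, as required.

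With the two hypotheses of Theorem \ref{thm:2.3}(b) verified, that theorem yields
\[
K_* (C(T_\un) \rtimes W) \cong H^* (T_\un /\!/ W ;\Z) ,
\]
which combined with the reduction above completes the proof. The only genuine obstacle is the isotropy computation, and this is handled entirely by Steinberg's theorem; the rest of the argument is a direct assembly of Theorems \ref{thm:2.2} and \ref{thm:2.3}.
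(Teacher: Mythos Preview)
Your proof is correct and follows essentially the same approach as the paper: both reduce to $q=1$ via Theorem \ref{thm:2.2}, invoke Steinberg's connectedness theorem for centralizers of semisimple elements to ensure that every isotropy group $W_t$ is a Weyl group, and then apply Theorem \ref{thm:2.3}. You have simply spelled out the translation from Steinberg's theorem to $W_t = W(R_t)$ in a bit more detail than the paper does.
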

\begin{proof}
Let $\mc R$ be the root datum of $(\mc G (\C),T)$. By Steinberg's connectedness 
theorem \cite{Ste} the group $Z_{\mc G (\C)}(t)$ is connected for every $t \in T$.
Hence $W_t = W (Z_{\mc G (\C)}(t), T)$ is always a Weyl group.
Now Theorem \ref{thm:2.3} says that 
\[
H^* (T_\un /\!/ W ;\Z) \cong K_* (C(T_\un) \rtimes W) = K_* (C_r^* (\mc R,1)).
\]
Apply Theorem \ref{thm:2.2} to the right hand side.
\end{proof}

In fact Corollary \ref{cor:2.5} also applies to some other root data, for example
those of type $SO_{2n+1}$.

\section{Examples} \label{sec:exa}

In this section we will compute the topological K-theory of the $C^*$-Hecke algebras 
$C_r^* (\mc R,q)$ associated to common root data $\mc R$. As discussed after 
Theorem \ref{thm:2.2}, it suffices to do so for $q=1$ or for generic parameter functions.
For $q=1$ we will apply Theorem \ref{thm:2.3}, when that is possible.

Our approach for $q \neq 1$ will involve the following steps.
\begin{enumerate}
\item Explicitly write down the root datum and the associated
Weyl groups.\\

From \eqref{eq:2.4} we get a canonical decomposition
\begin{equation}\label{eq:6.27}
C_r^* (\mc R ,q) \rtimes \Gamma = \bigoplus\nolimits_P C_r^* (\mc R ,q)_P \rtimes \Gamma_P .
\end{equation}
where $P$ runs over a set of representatives for the action of $\mc G$ on the 
power set of $\Delta$ and $\Gamma_P$ is the setwise stabilizer of $P$ in $\Gamma$.
\item List a good set of $P$'s.\\

For every chosen $P$ we do the following:
\item Determine the root datum $\mc R_P$ and the residual points.
\item Determine the discrete series of $\mc H (\mc R_P ,q_P )$,
and all the relevant intertwining operators.
\item Describe $C_r^* (\mc R ,q)_P \rtimes \Gamma_P$ 
and its space of irreducible representations.
\item Calculate $K_* \big( C_r^* (\mc R ,q)_P \rtimes \Gamma_P \big)$.
\end{enumerate}
Often the final step can be reduced to commutative $C^*$-algebras. When this is not 
possible, we will transfer the problem to sheaf cohomology, via Theorem \ref{thm:2.15}.

\subsection{Type $GL_n$} \
\label{par:GLn}

The easiest root data to study are those associated with the reductive group $GL_n$. 
The right way to do this was shown by Plymen. From \cite[Lemma 5.3]{Ply1} we know that 
the topological $K$-groups of these affine Hecke algebras are free 
abelian, of a finite rank which is explicitly given. Strictly speaking, 
we do not really need to study this root datum, as we could just refer to Plymen's results.
Nevertheless, since many other examples rely on this case, we include an analysis.

From now on many things will be parametrized by partitions 
and permutations, so let us agree on some notations. We write 
partitions in decreasing order and abbreviate $(x)^3 = (x,x,x)$. 
A typical partition looks like 
\begin{equation}\label{eq:6.39}
\mu = (\mu_1 ,\mu_2 ,\ldots , \mu_d ) = 
(n)^{m_n} \cdots (2)^{m_2} (1)^{m_1}
\end{equation}
where some of the multiplicities $m_i$ may be 0. By 
$\mu \vdash n$ we mean that the weight of $\mu$ is 
\[
| \mu | = \mu_1 + \cdots + \mu_d = n
\]
The number of different $\mu_i$'s (i.e. the number of blocks in 
the diagram of $\mu$) will be denoted by $b (\mu )$ and the 
dual partition (obtained by reflecting the diagram of $\mu$) 
by $\mu^\vee$. Sometimes we abbreviate
\begin{equation}
\begin{aligned}
& \mr{gcd}(\mu) = \mr{gcd}(\mu_1 ,\ldots ,\mu_d ) \\
& \mu ! = \mu_1 ! \mu_2 ! \cdots \mu_d !
\end{aligned}
\end{equation}
With a such partition $\mu$ of $n$ we associate the permutation 
\[
\sigma (\mu ) = (1 2 \cdots \mu_1 ) (\mu_1 + 1 \cdots \mu_1 + 
\mu_2 ) \cdots (n+1- \mu_d \cdots n) \in S_n
\]
As is well known, this gives a bijection between partitions of 
$n$ and conjugacy classes in the symmetric group $S_n$. The
centralizer $Z_{S_n}(\sigma (\mu ))$ is generated by the cycles
\[
((\mu_1 + \cdots + \mu_i + 1) (\mu_1 + \cdots + \mu_i + 2) 
  \cdots (\mu_1 + \cdots + \mu_i + \mu_{i+1}))
\]
and the ``permutations of cycles of equal length''.
For example, if $\mu_1 = \mu_2$:
\begin{equation}\label{eq:6.40}
(1 \, \mu_1 + 1) (2 \, \mu_1 + 2) \cdots (\mu_1 \, 2\mu_1)
\end{equation}
Using the second presentation of $\mu$ this means that
\[
Z_{S_n}(\sigma(\mu)) \cong \prod\nolimits_{l=1}^n 
  (\mh Z / l \mh Z)^{m_l} \rtimes S_{m_l} .
\]
Let us recall the definition of $\mc R (GL_n )$ and the associated groups. 
Below $Q$ and $Q^\vee$ are the root and coroot lattices.
\[
\begin{aligned}
& X = \mh Z^n \quad Q = \{ x \in X : x_1 + \cdots + x_n = 0 \} \\
& Y = \mh Z^n \quad 
  Q^\vee = \{ y \in Y : y_1 + \cdots + y_n = 0 \} \\
& T = (\mh C^\times )^n \quad  t = (t(e_1),\ldots, t(e_n)) = 
  (t_1, \ldots, t_n) \\
& R = \{ e_i - e_j \in X : i \neq j \} ,\quad \alpha_0 = e_1 - e_n \\
& R^\vee = \{ e_i - e_j \in Y : i \neq j \} , \quad \alpha_0^\vee = e_1 - e_n \\
& s_i = s_{\alpha_i} = s_{e_i - e_{i+1}} \quad s_0 = t_{\alpha_0} s_{\alpha_0} = 
  t_{\alpha_1} s_{\alpha_0} t_{-\alpha_1} : x \to x + \alpha_0 - 
  \inp{\alpha_0^\vee}{x} \alpha_0 \\
& W = \langle s_1 ,\cdots, s_{n-1} | s_i^2 = (s_i s_{i+1})^3 = 
  (s_i s_j)^2 = e : |i - j| > 1 \rangle \cong S_n \\
& S^\af = \{ s_0 ,s_1 ,\ldots s_{n-1} \} \\
& W^\af = \langle s_0, W_0 | s_0^2 = (s_0 s_i)^2 = 
  (s_0 s_1)^3 = (s_0 s_{n-1})^3 = e \;\mr{if}\; 2 \leq i 
  \leq n-2 \rangle \\
& W^e = W^\af \rtimes \Omega \quad \Omega = \langle t_{e_1} 
  (1 \, 2 \cdots n) \rangle \cong \mh Z
\end{aligned}
\]
Because all roots of $R$ are conjugate, $s_0$ is conjugate to any
$s_i \in S^\af$. Hence for any label function we have
\[
q(s_0 ) = q(s_i ) := q
\]
Every point of $T$ is $W$-conjugate to one of the form $t = \big( (t_1)^{\mu_1} 
(t_{\mu_1 +1})^{\mu_2} \cdots (t_n)^{\mu_d} \big) \in T$ and
\begin{equation}\label{eq:6.17}
W_t = S_{\mu_1} \times S_{\mu_2} \times \cdots \times S_{\mu_d}.
\end{equation}

\begin{itemize}
\item {\large \textbf{case} $\mb{q = 1}$}
\end{itemize}

By \eqref{eq:2.12} and \eqref{eq:2.13} we have
\begin{equation}\label{eq:6.1}
K_* \big( C_r^* (W^e) \big) \otimes \mh C \cong \check H^* \big( 
\widetilde{T_\un} \big/ S_n ; \mh C \big) \cong 
\bigoplus\nolimits_{\mu \vdash n} \check H^* \big( T_\un^{\sigma (\mu)} 
\big/ Z_{S_n}(\sigma (\mu )) ; \mh C \big) .
\end{equation}
Therefore we want to determine $T_\un^{\sigma (\mu)} / 
Z_{S_n}(\sigma (\mu ))$. If $\mu$ is as in \eqref{eq:6.39} then
\begin{equation}
\begin{aligned}
& T^{\sigma (\mu )} = \{ (t_1)^{\mu_1} (t_{\mu_1+1})^{\mu_2} 
\cdots (t_n)^{\mu_d} \in T \} , \\
& T^{\sigma (\mu)} \big/ Z_{S_n}(\sigma (\mu )) \cong
  (\mh C^\times)^{m_n} / S_{m_n} \times \cdots \times 
  (\mh C^\times)^{m_1} / S_{m_1} ,
\end{aligned}
\end{equation}
where $S_{m_l}$ acts on $(\mh C^\times)^{m_l}$ by permuting
the coordinates. To handle this space we use the following nice,
elementary result, a proof of which can be found for example in 
\cite[Lemma 5.1]{Ply1}.

\begin{lem}\label{lem:6.1}
For any $m \in \mh N$ there is an isomorphism of algebraic
varieties
\[
(\mh C^\times )^m \big/ S_m \cong \mh C^{m-1} \times \mh C^\times
\]
\end{lem}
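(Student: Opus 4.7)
The plan is to realize the quotient $(\mathbb{C}^\times)^m / S_m$ as the affine variety whose coordinate ring is the ring of $S_m$-invariant regular functions on $(\mathbb{C}^\times)^m$, and then exhibit an explicit isomorphism of that ring with $\mathbb{C}[x_1,\ldots,x_{m-1}] \otimes \mathbb{C}[y^{\pm 1}]$.

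First I would identify the coordinate ring of $(\mathbb{C}^\times)^m$ with the Laurent polynomial ring $\mathbb{C}[t_1^{\pm 1},\ldots,t_m^{\pm 1}]$, so that the quotient variety corresponds to the subring of symmetric Laurent polynomials $\mathbb{C}[t_1^{\pm 1},\ldots,t_m^{\pm 1}]^{S_m}$. The main step is then to show that this invariant ring is generated by the elementary symmetric polynomials $e_1,\ldots,e_{m-1}$ together with $e_m = t_1 t_2 \cdots t_m$ and its inverse $e_m^{-1}$. The inclusion $\supset$ is immediate from the fundamental theorem of symmetric functions once one observes that $e_m$ is a unit on $(\mathbb{C}^\times)^m$. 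For the reverse inclusion, any symmetric Laurent polynomial $f$ can, after multiplication by a sufficiently high power $e_m^N$, be turned into an ordinary symmetric polynomial in $t_1,\ldots,t_m$; by the fundamental theorem such a polynomial lies in $\mathbb{C}[e_1,\ldots,e_m]$, and dividing by $e_m^N$ puts $f$ in $\mathbb{C}[e_1,\ldots,e_{m-1},e_m^{\pm 1}]$.

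Next I would verify that $e_1,\ldots,e_{m-1},e_m$ are algebraically independent over $\mathbb{C}$; this is the standard fact that the elementary symmetric polynomials form a free polynomial generating set. Consequently
\[
\mathbb{C}[t_1^{\pm 1},\ldots,t_m^{\pm 1}]^{S_m} \;\cong\; \mathbb{C}[e_1,\ldots,e_{m-1}] \otimes_\mathbb{C} \mathbb{C}[e_m^{\pm 1}],
\]
and taking $\mathrm{Spec}$ of both sides yields the desired isomorphism of algebraic varieties
\[
(\mathbb{C}^\times)^m / S_m \;\cong\; \mathbb{C}^{m-1} \times \mathbb{C}^\times .
\]

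The only mildly nontrivial point is the algebraic independence combined with the ``clearing denominators'' argument; everything else is classical invariant theory. I do not expect any real obstacle here.
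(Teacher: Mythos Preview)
Your argument is correct and is the standard one: identify the quotient with $\mathrm{Spec}$ of the ring of symmetric Laurent polynomials, clear denominators by a power of $e_m$, and invoke the fundamental theorem of symmetric functions together with the algebraic independence of $e_1,\ldots,e_m$. The paper does not actually supply its own proof of this lemma --- it merely cites \cite[Lemma 5.1]{Ply1} --- and the argument you have written is essentially the classical one found there.
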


Consequently $T_\un^{\sigma (\mu)} \big/ Z_{S_n}(\sigma (\mu ))$
has the homotopy type of $(S^1)^{b (\mu )}$. In particular its integral
cohomology is torsion-free, so Corollary \ref{cor:2.5} is applicable.
It says that \eqref{eq:6.1} can be refined to
\begin{equation}
K_* (C_r^* (W^e)) \cong \bigoplus\nolimits_{\mu \vdash n} 
\check H^* \big( (S^1)^{b (\mu )} ; \Z \big) 
\cong \bigoplus\nolimits_{\mu \vdash n} \Z^{2^{b (\mu )}} .
\end{equation}

\begin{itemize}
\item {\large \textbf{generic, equal parameter case} $\mb q \neq 1$}
\end{itemize}
Inequivalent subsets of $\Delta$ are parametrized by partitions 
$\mu$ of $n$. For the typical partition \eqref{eq:6.39} we have
\[
\begin{aligned}
& P_\mu = \Delta \setminus \{\alpha_{\mu_1}, \alpha_{\mu_1 + 
\mu_2}, \ldots, \alpha_{n - \mu_d} \} \\
& R_{P_\mu} \cong (A_{n-1})^{m_n} \times \cdots \times (A_1)^{m_2} 
  \cong R_{P_\mu}^\vee \\
& X^{P_\mu} \cong \mh Z (e_1 + \cdots + e_{\mu_1} ) / \mu_1 + 
  \cdots + \mh Z (e_{n+1 - \mu_d} + \cdots + e_n) / \mu_d \\
& X_{P_\mu} \cong \big( \mh Z^n / \mh Z (e_1 + \cdots + e_n ) \big)^{m_n} + 
  \cdots +  \big( \mh Z^2 / \mh Z (e_1 + e_2 ) \big)^{m_2} \\
& Y^{P_\mu} = \mh Z (e_1 + \cdots + e_{\mu_1}) + \cdots + \mh Z 
  (e_{n+1 - \mu_d} + \cdots + e_n) \\
& Y_{P_\mu} = \{ y \in \mh Z^n : y_1 + \cdots + y_{\mu_1} = \cdots 
  = y_{n+1- \mu_d} + \cdots + y_n = 0 \} \\
& T^{P_\mu} = \{ (t_1)^{\mu_1} \cdots (t_n)^{\mu_d} \in T \} \\
& T_{P_\mu} = \{ t \in T : t_1 t_2 \cdots t_{\mu_1} = \cdots =
  t_{n+1 -\mu_d} \cdots t_n = 1 \} \\
& K_{P_\mu} = \{ t \in T^{P_\mu} : t_1^{\mu_1} = \cdots =  
  t_n^{\mu_d} = 1 \} \\
& W_{P_\mu} \cong (S_n )^{m_n} \times \cdots \times (S_2 )^{m_2}
  \qquad W(P_\mu ,P_\mu ) \cong S_{m_n} \times \cdots \times 
  S_{m_2} \times S_{m_1} \\
& \mc G_{P_\mu P_\mu} = K_{P_\mu} \rtimes W(P_\mu ,P_\mu ) \qquad
  Z_{S_n}(\sigma (\mu )) = W(P_\mu ,P_\mu ) \ltimes \prod\nolimits_{l=1}^n 
  (\mh Z / l \mh Z)^{m_l}
\end{aligned}
\] 
The $W_{P_\mu}$-orbits of residual points for $\mc H_{P_\mu}$ 
are parametrized by 
\[
K_{P_\mu} \big( ( q^{(\mu_1 - 1)/2} ,q^{(\mu_1 - 3)/2}, \ldots ,
q^{(1 - \mu_1 )/2}) \cdots (q^{(\mu_d - 1)/2} ,q^{(\mu_d - 3)/2},
\ldots ,q^{(1 - \mu_d )/2}) \big)
\]
This set is obviously in bijection with $K_{P_\mu}$, and indeed
the intertwiners $\pi (k), k \in K_{P_\mu}$ act on it by 
multiplication. From the classification of the discrete series we know
that here every residual point carries precisely one discrete series
representation, namely a twist of a Steinberg representation. 
The quickest way to see this is with the Kazhdan--Lusztig classification of 
irreducible representations of affine Hecke algebras with equal parameters, 
in particular \cite[Theorems 7.12 and 8.13]{KaLu}. This implies
\[
\begin{aligned}
& \bigcup\nolimits_\delta \big( P_\mu, \delta ,T^{P_\mu} \big) 
\big/ K_{P_\mu} \cong T^{P_\mu} , \\
& \bigcup\nolimits_\delta \big( P_\mu, \delta ,T^{P_\mu} \big) 
\big/ \mc G_{P_\mu P_\mu} \cong T^{P_\mu} \big/ W(P_\mu ,P_\mu ) =
T^{\sigma (\mu )} \big/ Z_{S_n}(\sigma (\mu )) .
\end{aligned}
\]
If a point $\xi = (P_\mu,\delta,t)$ has a nontrivial stabilizer $\mc G_\xi$,
then by the above this stabilizer is contained in
$W(P_\mu , P_\mu) \cong \prod_{l=1}^n S_{m_l}$. It is easily seen that this 
isotropy group is actually a Weyl group, and that it equals the group 
$W(R_\xi)$ from \eqref{eq:2.8}. In other words, all R-groups are trivial
for this root datum and $q \neq 1$, and all intertwining operators $\pi (g,\xi)$ from
a representation $\pi (\xi)$ to itself are scalar multiples of the identity.
So the action of $\mc W_{P_\mu P_\mu}$ on 
\begin{equation}\label{eq:6.48}
C \Big( \bigsqcup\nolimits_\delta T_\un^{P_\mu} ; M_{n! / \mu !} (\mh C ) \Big)
\end{equation}
is essentially only on $\bigsqcup_\delta T_u^{P_\mu}$ 
and the conjugation part doesn't really matter. In particular we deduce that 
\begin{equation}\label{eq:6.41}
C_r^* (\mc R ,q) \cong \bigoplus_{\mu \vdash n} M_{n! / \mu !} 
\Big( C \Big( \bigsqcup_\delta T_\un^{P_\mu} \Big) \Big)
\cong \bigoplus_{\mu \vdash n} M_{n! / \mu !} \big(
T_\un^{\sigma (\mu )} \big/ Z_{S_n}(\sigma (\mu )) \big) .
\end{equation}
In particular $C_r^* (\mc R ,q)$ is Morita-equivalent with the commutative
$C^*$-algebra of continuous functions on $T_\un /\!/ S_n$.
Similar results were obtained by completely different methods in \cite{Mis}. 

We remark that $\Irr ( C_r^* (\mc R,q) )$ has a clear relation with the elliptic
representation theory of symmetric groups. Every $\delta$ is essentially a
Steinberg representation, so 
\[
\zeta^\vee (\delta \circ \phi_t) \in \Mod \big( \mc O (T) \rtimes Z_{S_n} (\sigma (\mu)) \big)
\]
is given by the $\mc O(T)$-character $t$ and the sign representation of the 
Weyl group $Z_{S_n} (\sigma (\mu))_t$. Moreover the group $Z_{S_n} (\sigma (\mu))_t$
can be identified with $R(\xi)$, where $\xi = (P_\mu,\delta,t)$. Then
$\zeta^\vee (\pi (\xi)) = \ind_{W (R_\xi)}^{(S_n )_t} $(sign)
as $(S_n)_t$-representations, and this is exactly a member of the basis $R_\Z ((S_n)_t)$
exhibited in Proposition \ref{prop:1.8}.b.

Using the analysis from the case $q=1$ it follows that 
\begin{equation}\label{eq:6.3}
K_* \big( C_r^* (\mc R ,q) \big) \cong \bigoplus_{\mu \vdash n}
K^* \big( T_\un^{\sigma (\mu )} \big/ Z_{S_n}(\sigma (\mu )) \big)
\cong \bigoplus_{\mu \vdash n} K^* \big( (S^1)^{b(\mu )} \big) 
\cong \bigoplus_{\mu \vdash n} \mh Z^{2^{b(\mu )}} .
\end{equation}
Recall that the even cohomology of $(S^1)^b$ has the same dimension as its
odd cohomology, unless $b = 0$. The same holds for K-theory, and $b(\mu) = 0$
does not occur because $b(\mu)$ counts the number of different terms in a partition
of $n \geq 1$. So we can refine \eqref{eq:6.3} to
\begin{equation}
K_0 (C_r^* (\mc R,q)) = \bigoplus\nolimits_{\mu \vdash n} \mh Z^{2^{b(\mu ) - 1}} ,\quad
K_1 (C_r^* (\mc R,q)) = \bigoplus\nolimits_{\mu \vdash n} \mh Z^{2^{b(\mu ) - 1}} .
\end{equation}

\subsection{Type $SL_n$} \
\label{par:SLn}

The affine Hecke algebra associated to a root datum of type $SL_n$
describes the category of Iwahori--spherical representations of
$PGL_n (\Q_p)$. Since that is a subcategory of the Iwahori--spherical
representations of $GL_n (\Q_p)$, It can be expected this affine Hecke
algebra behaves very similarly to those in the previous paragraph. Indeed,
we will see that the calculations of the K-theory are essentially the
same as in Paragraph \ref{par:GLn}.

The root datum $\mc R (SL_n)$ is given by:
\[
\begin{aligned}
& X = \mh Z^n / \mh Z (e_1 + \cdots e_n) \cong 
  Q + ((e_1 + \cdots + e_n) /n - e_n) \\
& Q = \{ x \in \mh Z^n : x_1 + \cdots + x_n = 0 \} \\
& Y = Q^\vee = \{ y \in \mh Z^n : y_1 + \cdots + y_n = 0 \} \\ 
& T = \{ t \in (\mh C^\times)^n : t_1 \cdots t_n = 1 \} \quad
  t = (t(e_1),\ldots, t(e_n)) = (t_1, \ldots, t_n) \\
& R = \{ e_i - e_j \in X : i \neq j \} \quad \alpha_0 = e_1 - e_n \\
& R^\vee = \{ e_i - e_j \in Y : i \neq j \} \quad \alpha_0 = e_1 - e_n \\
& s_i = s_{\alpha_i} = s_{e_i - e_{i+1}} \quad s_0 = t_{\alpha_0} s_{\alpha_0} = 
  t_{\alpha_1} s_{\alpha_0} t_{-\alpha_1} : x \to x + \alpha_0 - 
  \inp{\alpha_0^\vee}{x} \alpha_0 \\
& W = \langle s_1 ,\cdots , s_{n-1} | s_i^2 = (s_i s_{i+1})^3 = 
  (s_i s_j)^2 = e \; \mr{if} \; \mr |i - j| > 1 \rangle \cong S_n \\
& S^\af = \{ s_0 ,s_1 ,\ldots ,s_{n-1} \} \\
& W^\af = \langle s_0, W_0 | s_0^2 = (s_0 s_i)^2 = 
  (s_0 s_1)^3 = (s_0 s_{n-1})^3 = e \;\mr{if}\; 2 \leq i 
  \leq n-2 \rangle \\
& W^e = W^\af \rtimes \Omega \quad 
  \Omega = \langle t_{e_1 - (e_1 + \cdots e_n)/n} (1 2 \cdots n)
  \rangle \cong \mh Z / n \mh Z 
\end{aligned}
\]
Because all roots are conjugate, $s_0$ is conjugate to any 
$s_i \in S^\af$, and for any label function
\[
q(s_0 ) = q(s_i ) = q .
\]
The $W$-stabilizer of $\big( (t_1)^{\mu_1} (t_{\mu_1+1})^{\mu_2}
\cdots (t_n)^{\mu_d} \big)$ is isomorphic to
$S_{\mu_1} \times \cdots \times S_{\mu_d}$. Generically there are
$n! \, n$ residual points, and they all satisfy $t(\alpha_i ) = q$
or $t(\alpha_i ) = q^{-1}$ for $1 \leq i < n$. These residual
points form $n$ conjugacy classes, unless $q = 1$.

\begin{itemize}
\item {\large \textbf{group case} $\mb{q = 1}$}
\end{itemize}
In view of \eqref{eq:2.12} and \eqref{eq:2.13} we want to determine 
$T_\un^{\sigma (\mu)} \big/ Z_{S_n}(\sigma (\mu ))$, where $\mu$
is any partition of $n$. Write it as in \eqref{eq:6.39}, then
\[
\begin{array}{lll}
\! T^{\sigma (\mu )} \! & = & \{ (t_1)^{\mu_1} (t_{\mu_1+1})^{\mu_2} 
  \cdots (t_n)^{\mu_d} \in T \} \\
& \cong & \{ (t_1)^{\mu_1} (t_{\mu_1+1})^{\mu_2} \cdots 
  (t_n)^{\mu_d} \in (\mh C^\times )^n \} / \mh C^\times \times 
  \{ (e^{2 \pi i k / n})^n : 0 \leq k < \mr{gcd}(\mu) \} , \\
\multicolumn{3}{l}{\! T^{\sigma (\mu)} \big/ Z_{S_n}(\sigma (\mu )) 
\quad \cong \quad
\left( (\mh C^\times)^{m_n} / S_{m_n} \times \cdots \times (
\mh C^\times)^{m_1} / S_{m_1} \right) \big/ \mh C^\times \times }\\
 & & \hspace{23mm} \{ (e^{2 \pi i k / n})^n : 0 \leq k < \mr{gcd}(\mu) \} .
\end{array}
\]
where $\mh C^\times $ acts diagonally. By Lemma \ref{lem:6.1}
each factor $(\mh C^\times)^{m_i} / S_{m_i}$ is homotopy 
equivalent to a circle. The induced action of $S^1 \subset 
\mh C^\times$ on this direct product of circles identifies with 
a direct product of rotations. Hence $T^{\sigma (\mu)} /
Z_{S_n}(\sigma (\mu ))$ is homotopy equivalent with 
$\mh T^{b(\mu ) - 1} \times \{\mr{gcd}(\mu ) \; \mr{points} \}$,
and the extended quotient $T /\!/ W$ has torsion-free cohomology.
By Corollary \ref{cor:2.5}
\begin{equation}\label{eq:6.11}
K_* (C_r^* (W^e)) \cong \Z^{d(n)}, \quad 
d(n) := \sum\nolimits_{\mu \vdash n} \mr{gcd}(\mu) 2^{b(\mu )-1} .
\end{equation}

\begin{itemize}
\item {\large \textbf{generic, equal parameter case} $\mb q \neq 1$}
\end{itemize}
Inequivalent subsets of $\Delta$ are parametrized by partitions 
$\mu$ of $n$. For the typical partition \eqref{eq:6.39} we put 
\[
\begin{aligned}
& P_\mu = \Delta \setminus \{\alpha_{\mu_1}, \alpha_{\mu_1 + 
\mu_2}, \ldots, \alpha_{n - \mu_d} \} \\
& R_{P_\mu} \cong (A_{n-1})^{m_n} \times \cdots \times (A_1)^{m_2}
  \cong R_{P_\mu}^\vee \\
& X^{P_\mu} \!\cong \left( \mh Z (e_1 + \!\cdots\! + e_{\mu_1}) / 
  \mu_1 + \!\cdots\! + \mh Z (e_{n+1 - \mu_d} + \!\cdots\! + e_n) 
  / \mu_d \right) \!\big/ \mh Z (e_1 + \!\cdots\! + e_n) / g \\
& X_{P_\mu} \cong \big( \mh Z^n / \mh Z (e_1 + \cdots + e_n ) \big)^{m_n} +
  \cdots + \big( \mh Z^2 / \mh Z (e_1 + e_2 ) \big)^{m_2} \\
& Y^{P_\mu} = \{ y \in \mh Z (e_1 + \cdots + e_{\mu_1}) + \cdots 
  + \mh Z (e_{n+1 - \mu_d} + \cdots + e_n) : y_1 + \cdots + y_n = 
  0 \} \\
& Y_{P_\mu} = \{ y \in Y : y_1 + \cdots + y_{\mu_1} = \cdots = 
  y_{n+1- \mu_d} + \cdots + y_n = 0 \} \\
& T^{P_\mu} = \{ (t_1)^{\mu_1} \cdots (t_n)^{\mu_d} \in T :
  t_1^{\mu_1 / g} \cdots t_n^{\mu_d / g} = 1 \} ,\quad g = \mr{gcd}(\mu) \\
& T_{P_\mu} = \{ t \in T : t_1 t_2 \cdots t_{\mu_1} = \cdots =
  t_{n+1 -\mu_d} \cdots t_n = 1 \} \\
& K_{P_\mu} = \{ t \in T^{P_\mu} : t_1^{\mu_1} = \cdots =  
  t_n^{\mu_d} = 1 \} \\
& W_{P_\mu} \cong (S_n )^{m_n} \times \cdots \times (S_2 )^{m_2}
  \qquad W(P_\mu ,P_\mu ) \cong S_{m_n} \times \cdots \times 
  S_{m_2} \times S_{m_1} \\
& \mc G_{P_\mu P_\mu} = K_{P_\mu} \rtimes W(P_\mu ,P_\mu ) \qquad
  Z_{S_n}(\sigma (\mu )) = W(P_\mu ,P_\mu ) \ltimes \prod\nolimits_{l=1}^n 
  (\mh Z / l \mh Z)^{m_l}
\end{aligned}
\]
\begin{thm}\label{thm:6.3}
For $q \neq 1$ the $C^*$-algebra $C_r^* (\mc R(SL_n),q)$ is Morita equivalent 
with the commutative algebra of continuous functions on $T_\un /\!/ W$.

Its K-theory is given by
\begin{align*}
& K_0 (C_r^* (\mc R,q)) = \bigoplus\nolimits_{\mu \vdash n , b(\mu) > 1} 
\mh Z^{\mr{gcd}(\mu) 2^{b(\mu ) - 2}} \oplus 
\bigoplus\nolimits_{\mu \vdash n , b(\mu) = 1} \Z^{\mr{gcd}(\mu)} , \\
& K_1 (C_r^* (\mc R,q)) = \bigoplus\nolimits_{\mu \vdash n, b(\mu) > 1} 
\mh Z^{\mr{gcd}(\mu) 2^{b(\mu ) - 2}} .
\end{align*}
\end{thm}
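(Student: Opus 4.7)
The plan is to run the same program used in Paragraph \ref{par:GLn} for $GL_n$, adapted to the smaller torus $T = \{t \in (\C^\times)^n : \prod_i t_i = 1\}$ and the nontrivial length-zero group $\Omega \cong \Z / n \Z$. The $\mc G$-orbits of subsets of $\Delta$ are again indexed by partitions $\mu \vdash n$ via $P_\mu$, and the parabolic subalgebra $\mc H_{P_\mu}$ is essentially a tensor product of equal-parameter type-$A$ graded Hecke algebras. By the Kazhdan--Lusztig classification (as invoked in the $GL_n$ case), every residual point of $\mc H_{P_\mu}$ carries exactly one discrete series representation, a twist of Steinberg, so the set of induction data $(P_\mu,\delta)$ with $\delta$ discrete series is parametrized, modulo $K_{P_\mu}$, by $K_{P_\mu}$ itself.

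The central technical step is to check that for every $\xi = (P_\mu, \delta, t)$ the isotropy group $\mc G_\xi$ inside $\mc G_{P_\mu P_\mu} = K_{P_\mu} \rtimes W(P_\mu, P_\mu)$ is a Weyl group coinciding with the group $W(R_\xi)$ of \eqref{eq:2.8}, so that the R-group $\mf R_\xi$ is trivial and every self-intertwiner $\pi(g,\xi)$ is a scalar. This reduces the groupoid action in \eqref{eq:2.1} to an action on the base only, and \eqref{eq:2.4} collapses as in \eqref{eq:6.41} to
\[
C_r^* (\mc R(SL_n),q) \;\cong\; \bigoplus_{\mu \vdash n} M_{n!/\mu!} \bigl( C \bigl( T_\un^{\sigma(\mu)} / Z_{S_n}(\sigma(\mu)) \bigr) \bigr),
\]
which is Morita equivalent to $\bigoplus_\mu C (T_\un^{\sigma(\mu)} / Z_{S_n}(\sigma(\mu))) = C(T_\un /\!/ W)$, proving the first assertion.

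The K-theory computation then proceeds by importing the homotopy analysis already carried out in the $q=1$ subcase of this paragraph: $T_\un^{\sigma(\mu)} / Z_{S_n}(\sigma(\mu))$ is homotopy equivalent to a disjoint union of $\gcd(\mu)$ tori of dimension $b(\mu)-1$. Hence the K-theory of a single component is $\Z^{2^{b(\mu)-1}}$, split evenly between $K^0$ and $K^1$ when $b(\mu) \geq 2$ and concentrated in $K^0$ when $b(\mu) = 1$. Summing over the $\gcd(\mu)$ components and over all partitions produces the stated formulas for $K_0$ and $K_1$. The main obstacle, compared with $GL_n$, is precisely the verification of R-group triviality and the scalar nature of self-intertwiners at every specialization of $t$: one must confirm that the possible extra isotropy coming from $\Omega$ and from the factors $(\Z/l\Z)^{m_l}$ in $Z_{S_n}(\sigma(\mu))$ is absorbed into the permutation of Steinberg blocks on the base, rather than contributing nontrivial automorphisms of the fibre. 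Once this is in place, Morita invariance of K-theory and the $q=1$ count above give the result.
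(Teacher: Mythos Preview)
Your overall strategy mirrors the paper's: use the Plancherel decomposition \eqref{eq:2.4}, verify that R-groups are trivial so self-intertwiners are scalar, collapse to a direct sum of matrix algebras over components of the extended quotient, and then read off K-theory from the $q=1$ homotopy analysis. That skeleton is correct.

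However, there is a genuine gap in your counting of discrete series. You write that the discrete series of $\mc H_{P_\mu}$ are ``parametrized, modulo $K_{P_\mu}$, by $K_{P_\mu}$ itself'', which is at best confused and in any reading wrong. The point is that $\mc R_{P_\mu}$ for $SL_n$ has the \emph{same} $X_{P_\mu}$ as for $GL_n$ (passing to $\Z^n/\Z(e_1+\cdots+e_n)$ does not touch the semisimple part), so $\mc H_{P_\mu}$ is literally the same algebra and its $W_{P_\mu}$-orbits of residual points are still in bijection with $K_{P_\mu}^{GL_n}$. But $K_{P_\mu}^{SL_n}$ is strictly smaller: it lives in the $SL_n$-torus $T^{P_\mu}$, which carries the extra constraint $t_1^{\mu_1/g}\cdots t_n^{\mu_d/g}=1$ with $g=\gcd(\mu)$, and one checks $|K_{P_\mu}^{GL_n}| = g\cdot|K_{P_\mu}^{SL_n}|$. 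Hence the discrete series fall into $\gcd(\mu)$ orbits under $K_{P_\mu}$, not one. This is precisely how the paper obtains
\[
\bigcup\nolimits_\delta (P_\mu,\delta,T^{P_\mu})\big/K_{P_\mu}\;\cong\;T^{P_\mu}\times\Z/\gcd(\mu)\Z\;\cong\;T^{\sigma(\mu)},
\]
matching the $\gcd(\mu)$ connected components of $T^{\sigma(\mu)}$. Without this step your displayed isomorphism with $M_{n!/\mu!}\bigl(C(T_\un^{\sigma(\mu)}/Z_{S_n}(\sigma(\mu)))\bigr)$ is asserted rather than proved: taking your count at face value, the base would be a single copy of $T^{P_\mu}_\un/W(P_\mu,P_\mu)$, and the factor $\gcd(\mu)$ in the final K-theory formula would have no source on the $q\neq 1$ side.

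A minor remark: the obstruction you flag from $\Omega$ is a red herring. The groupoid $\mc G$ of \eqref{eq:GPQ} is built from $W$ and the groups $K_P$, not from $W^e$; $\Omega$ does not enter. The only thing one needs (and the paper dispatches it by appeal to the $GL_n$ analysis) is that the isotropy of $\xi=(P_\mu,\delta,t)$ inside $W(P_\mu,P_\mu)\cong\prod_l S_{m_l}$ coincides with $W(R_\xi)$, which it does for type-$A$ blocks.
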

\begin{proof}
The $W_{P_\mu}$-orbits of residual points for $\mc H_{P_\mu}$ are
represented by the points
\begin{multline}\label{eq:6.50}
\big( ( q^{(\mu_1 - 1)/2} ,q^{(\mu_1 - 3)/2}, \ldots ,
q^{(1 - \mu_1 )/2}) \cdots (q^{(\mu_d - 1)/2} ,q^{(\mu_d - 3)/2},
\ldots ,q^{(1 - \mu_d )/2}) \big) \, \cdot \\ 
\big( (e^{2 \pi i k_1 / \mu_1} )^{\mu_1} \cdots (e^{2 \pi i k_d /
\mu_d} )^{\mu_d} \big) \quad , \quad 0 \leq k_i < \mu_i
\end{multline}
These points are in bijection with $K_{P_\mu} \times \mh Z / 
\mr{gcd}(\mu ) \mh Z$. Also $T^{\sigma (\mu )}$ consists of
exactly gcd$(\mu )$ components, one of which is $T^{P_\mu}$.
Just as in the type $GL_n$ case, this leads to
\[
\begin{aligned}
& \bigcup\nolimits_\delta \big( P_\mu, \delta ,T^{P_\mu} \big) 
\big/ K_{P_\mu} \cong T^{P_\mu} \times \mh Z / \mr{gcd}(\mu ) \mh Z 
\cong T^{\sigma (\mu )} , \\
& \bigcup\nolimits_\delta \big( P_\mu, \delta ,T^{P_\mu} \big) 
\big/ \mc W_{P_\mu P_\mu} \cong T^{\sigma (\mu )} \big/ 
Z_{S_n}(\sigma (\mu )) , \\
& C_r^* (\mc R ,q) \cong \bigoplus_{\mu \vdash n} M_{n! / \mu !} 
\Big( C \Big( \bigsqcup_\delta T_u^{P_\mu} \Big) \Big)
\cong \bigoplus_{\mu \vdash n} M_{n! / \mu !} \big(
T_u^{\sigma (\mu )} \big/ Z_{S_n}(\sigma (\mu )) \big) .
\end{aligned}
\]
The extended quotient $T_\un /\!/ W$ is $\bigsqcup_{\mu \vdash n} 
T_u^{\sigma (\mu )} \big/ Z_{S_n}(\sigma (\mu ))$, which gives the 
desired Morita equivalence. It follows that
\begin{equation}\label{eq:6.4}
K_* \big( C_r^* (\mc R ,q) \big) \cong \bigoplus_{\mu \vdash n}
K^* \big( T_u^{\sigma (\mu )} \big/ Z_{S_n}(\sigma (\mu )) \big)
\cong \bigoplus_{\mu \vdash n} K^* \big( (S^1)^{b(\mu ) -1} 
\big)^{\mr{gcd}(\mu)} .
\end{equation}
This a free abelian group of rank $d(n) = \sum_{\mu \vdash n} \mr{gcd}(\mu) 
2^{b(\mu )-1}$ with $b(\mu)$ as on page \pageref{eq:6.39}.
Since the even K-theory of $(S^1)^b$ has the same rank as the odd K-theory
unless $b=0$, \eqref{eq:6.4} leads to $K_0$ and $K_1$ as claimed.
\end{proof}

\subsection{Type $PGL_n$} \

The root datum for the algebraic group $PGL_n$ gives rise to:
\[
\begin{aligned}
& X = Q = \{ x \in \mh Z^n : x_1 + \cdots + x_n = 0 \} \\
& Q^\vee = \{ y \in \mh Z^n : y_1 + \cdots + y_n = 0 \} \\
& Y = \mh Z^n / \mh Z (e_1 + \cdots +e_n ) \cong 
  Q^\vee + ((e_1 + \cdots + e_n)/n - e_1) \\
& T = (\mh C^\times)^n / \mh C^\times \quad t = (t_1, \ldots, t_n) 
  = (t(e_1), \ldots, t(e_n)) \\
& R = \{ e_i - e_j \in X : i \neq j\}\quad \alpha_0 = e_1 - e_n \\
& R^\vee =  \{ e_i - e_j \in Y : i \neq j\} \quad \alpha_0 = e_1 - e_n \\
& s_i = s_{\alpha_i} = s_{e_i - e_{i+1}} \quad s_0 = t_{\alpha_0} s_{\alpha_0} : 
  x \to x + \alpha_0 - \inp{\alpha_0^\vee}{x} \alpha_0 \\
& W = \langle s_1 ,\cdots , s_{n-1} | s_i^2 = (s_i s_{i+1})^3 = 
  (s_i s_j)^2 = e \; \mr{if} \; \mr |i - j| > 1 \rangle \cong S_n \\
& S^\af = \{ s_0 ,s_1 ,\ldots ,s_{n-1} \} \quad
  \Omega = \{ e \} \\
& W^e = W^\af = \langle s_0, W_0 | s_0^2 = (s_0 s_i)^2 = 
  (s_0 s_1)^3 = (s_0 s_{n-1})^3 = e \;\mr{if}\; 2 \leq i 
  \leq n-2 \rangle 
\end{aligned}
\]
For $n>2 ,\; s_0$ is conjugate to $s_1$ in $W^\af$, for $n=2$ it is not.
So for $n>2$ there is only one parameter $q = q(s_i) \; 0 \leq i \leq n-1$,
whereas for $n=2 \; q_0$ may differ from $q_1$. In particular, for $n=2$ the
equal parameter function $q(s_0) = q(s_1)$ is not generic. Nevertheless, we will
only consider equal parameter functions in this paragraph, explicit computations
for the other parameter functions on $\mc R (PGL_2)$ can be found in 
\cite[\S 6.1]{SolThesis}.

For $q \neq 1$ there are $n!$ residual points. They form one 
$W$-orbit, and a typical residual point is
\[
\big( q^{(1-n)/2}, q^{(3-n)/2}, \ldots ,q^{(n-1)/2} \big)
\]
To determine the isotropy group of points of $T$ we have to be
careful. In general the $W$-stabilizer of 
\[
\big( (t_1)^{\mu_1} (t_{\mu_1 + 1})^{\mu_2} \cdots (t_n 
)^{\mu_d} \big) \in T
\] 
is isomorphic to 
\[
S_{\mu_1} \times S_{\mu_2} \times \cdots \times S_{\mu_d} \subset W .
\]
However, in some special cases the diagonal action of $\mh 
C^\times$ on $(\mh C^\times )^n$ gives rise to extra stabilizing elements.
Let $r$ be a divisor of $n ,\, k \in (\mh Z / r \mh Z)^\times$
and $\lambda = (\lambda_1 ,\ldots ,\lambda_l )$ a partition of
$n/r$. The isotropy group of
\begin{multline*}
\hspace{-3mm} (t_1)^{\lambda_1} (e^{2 \pi i k / r} t_1 )^{\lambda_1} \cdots
(e^{-2 \pi i k / r} t_1 )^{\lambda_1} (t_{r \lambda_1 + 1}
)^{\lambda_2} \cdots (e^{-2 \pi i k / r} t_{r \lambda_1 + 1} 
)^{\lambda_2} \cdots (e^{-2 \pi i k / r} t_n )^{\lambda_l} 
\end{multline*}
is isomorphic to 
\begin{equation}\label{eq:6.43}
S^r_{\lambda_1} \times S^r_{\lambda_2} \times \cdots \times
S^r_{\lambda_l} \rtimes \mh Z / r \mh Z .
\end{equation}
Explicitly the subgroup $\mh Z / r \mh Z$ is generated by
\begin{multline*}\label{eq:6.44}
(1 \; \lambda_1 + 1 \; 2 \lambda_1 + 1 \cdots (r-1)\lambda_1 + 1)
(2 \; \lambda_1 + 2 \; 2 \lambda_1 + 2 \cdots (r-1)\lambda_1 + 2)
\cdots (\lambda_1 \; 2 \lambda_1 \cdots r \lambda_1 ) \\
\cdots (n+1 - r \lambda_d \; n + 1 + (1-r) \lambda_d \cdots 
n+1+ (r-1) \lambda_d ) ( n+ (1-r) \lambda_d \, n +(2-r) 
\lambda_d \cdots n) ,
\end{multline*}
and it acts on every factor $S^r_{\lambda_j}$ in \eqref{eq:6.43}
by cyclic permutations. 

\begin{itemize}
\item {\large \textbf{case} $\mb{q = 1}$}
\end{itemize}
As we noted before, we have to analyse $T_\un^{\sigma (\mu)} \big/ 
Z_{S_n}(\sigma (\mu ))$. For the typical partition $\mu$ we have
\begin{equation}
T^{\sigma (\mu )} = \{ (t_1)^{\mu_1} (t_{\mu_1+1})^{\mu_2} 
\cdots (t_n)^{\mu_d} \} \big/ \mh C^\times \times 
\{ t : t(e_j) = e^{2 \pi i j k / g} ,\, 0 \leq k < g \} ,
\end{equation}
which is the disjoint union of $g = \mr{gcd}(\mu )$ complex tori
of dimension\\ $m_n + m_{n-1} + \cdots + m_1 - 1$. We obtain
\begin{multline}\label{eq:6.45}
T^{\sigma (\mu )} \big/ Z_{S_n}(\sigma (\mu )) \cong
\left( (\mh C^\times)^{m_n} / S_{m_n} \times \cdots \times 
(\mh C^\times)^{m_1} / S_{m_1} \right) \big/ \mh C^\times \times \\
\{ t : t(e_j) = e^{2 \pi i j k / g} ,\, 0 \leq k < g \} .
\end{multline}
Remarkably enough, these sets are diffeomorphic to the corresponding
sets for $\mc R (SL_n )$. We take advantage of this by reusing our 
deduction that \eqref{eq:6.45} is homotopy equivalent with 
$(S^1)^{b (\mu ) - 1} \times \{\mr{gcd}(\mu ) \; \mr{points} \}$. 
With \eqref{eq:2.12} we conclude that 
$K_* (C_r^* (W^e)) \otimes_\Z \C$ has dimension 
$d(n) = \sum\nolimits_{\mu \vdash n} \mr{gcd}(\mu) 2^{b(\mu )-1}$.

\begin{itemize}
\item {\large \textbf{equal parameter case} $\mb{q \neq 1}$}
\end{itemize}
This is noticeably different from the generic cases for
$\mc R (GL_n )$ and $\mc R (A_{n-1}^\vee )$ because
$C_r^* (\mc R (A_{n-1} ,q) )$ is not Morita equivalent to a
commutative $C^*$-algebra. Of course the inequivalent subsets
of $\Delta$ are still parametrized by partitions $\mu$ of $n$.
\[
\begin{aligned}
& P_\mu = \Delta \setminus \{\alpha_{\mu_1}, \alpha_{\mu_1 + 
\mu_2}, \ldots, \alpha_{n - \mu_d} \} \\
& R_{P_\mu} \cong (A_{n-1})^{m_n} \times \cdots \times 
  (A_1)^{m_2} \cong R_{P_\mu}^\vee \\
& X^{P_\mu} \cong \{ x \in \mh Z (e_1 + \cdots + e_{\mu_1}) / \mu_1 
  + \cdots + \mh Z (e_{n+1 - \mu_d} + \cdots + e_n) / \mu_d : \\
& \quad x_1 + \cdots + x_n = 0 \} \\ 
& X_{P_\mu} \cong \{ x \in \mh Z^{\mu_1} / \mh Z (e_1 + \cdots + 
  e_{\mu_1} ) + \cdots + \mh Z^{\mu_d} / \mh Z (e_{n+1-\mu_d} + 
  \cdots + e_n ) : \\
& \quad x_1 + \cdots + x_n \in g \mh Z / g \mh Z \} \\
& Y^{P_\mu} \cong \mh Z (e_1 + \cdots + e_{\mu_1}) + \cdots + \mh Z 
  (e_{n+1 - \mu_d} + \cdots + e_n) / \mh Z (e_1 + \cdots + e_n) \\
& Y_{P_\mu} \cong \{ y : y_1 + \cdots + y_{\mu_1} = \cdots = y_{n+
  1- \mu_d} + \cdots + y_n = 0 \}  / \mh Z (e_1 + \cdots e_n) \\
& T^{P_\mu} = \{ (t_1)^{\mu_1} \cdots (t_n)^{\mu_d} \} /
  \mh C^\times \\
& T_{P_\mu} = \{ t : t_1 t_2 \cdots t_{\mu_1} = \cdots = t_{n+1
  -\mu_d} \cdots t_n = 1 \} / \{ z \in \mh C : z^g = 1 \} \\
& K_{P_\mu} = \{ (t_1)^{\mu_1} \cdots (t_n)^{\mu_d} : t_1^{\mu_1}
  = \cdots = t_n^{\mu_d} = 1 \} / \{ z \in \mh C : z^g = 1 \} \\
& W_{P_\mu} \cong S_n^{m_n} \times S_{n-1}^{m_{n-1}} \times \cdots
  \times S_2^{m_2} \quad W(P_\mu ,P_\mu ) \cong
  S_{m_n} \times \cdots \times S_{m_2} \times S_{m_1} 
\end{aligned}
\]
We note that 
\[
T^{\sigma (\mu )} = T^{P_\mu} \times \{ t : t(e_j) = 
e^{2 \pi i j k / g} ,\, 0 \leq k < g \} .
\]
The $W_{P_\mu}$-orbits of residual points for $\mc H_{P_\mu}$ are
represented by the points of
\[
K_{P_\mu} \big( q^{(\mu_1 -1)/2} ,q^{(\mu_1 -3)/2}, \ldots
q^{(1-\mu_1 )/2} ,q^{(\mu_2 -1)/2} ,\ldots ,q^{(\mu_d -1)/2} ,
\ldots ,q^{(1-\mu_d)/2} \big) .
\]
Hence the intertwiners $\pi (k)$ with $k \in K_{P_\mu}$ permute
the set of discrete series representations of $\mc H_{P_\mu}$ faithfully, and
\[
\bigsqcup\nolimits_\delta \big( P_\mu ,\delta ,T^{P_\mu} \big) / 
K_{P_\mu} \cong T^{P_\mu} = \big( T^{\sigma(\mu)} \big)^\circ .
\]
Just before \eqref{eq:6.48} we saw that the
intertwiners for $\mc R (GL_n) ,q \neq 1$ have the property
\[
w(t) = t \; \Rightarrow \; \pi (w, P_\mu ,\delta ,t) = 1 .
\]
This implies that in our present setting we can have $w(t) = t$
and $\pi (w, P_\mu ,\delta ,t) \neq 1$ only if $w(t) = t$ does
not hold without taking the action of $\mh C^\times$ into account.

Let us classify such $w \in W(P_\mu ,P_\mu )$ and $t \in
T^{P_\mu}$ up to conjugacy. For a divisor $r$ of $g^\vee := 
\mr{gcd}(\mu^\vee )$ we have the partition \index{mu1r@$\mu^{1/r}$}
\[
\mu^{1/r} := (n r)^{m_n / r} \cdots (2 r)^{m_2 /r} (r)^{m_1 /r} .
\]
Notice that
\[
b (\mu^{1/r} ) = b (\mu ) = b (\mu^\vee ) .
\]
There exists a $\sigma \in S_n$ which is conjugate to $\sigma
(\mu^{1/r})$ and satisfies $\sigma^r = \sigma (\mu )$. We 
construct a particular such $\sigma$ as follows. If $r = g^\vee$
then (starting from the left) replace every block 
\[
(d+1 \; d+2 \cdots d + m )(d+1+m \cdots d + 2 m)
\cdots (d+(g^\vee -1)m \cdots d + g^\vee m )
\]
of $\sigma (\mu )$ by
\[
(d+1 \; d+1+m \cdots d+1+(g^\vee - 1)m \; 2 \; d+2+m \cdots
d+2+(g^\vee -1)m \; d+3 \cdots d+g^\vee m) .
\]
We denote the resulting element by $\sigma (\mu )^{1 / g^\vee}$,
and for general $r | g^\vee$ we define
\[
\sigma (\mu )^{1/r} := 
\big( \sigma (\mu )^{1 / g^\vee} \big)^{g^\vee /r} .
\]
Consider the cosets of subtori
\[
T^{P_\mu}_{r,k} := \big( T^{\sigma (\mu )^{1/r}} \big)^\circ 
\big( (1)^{g^\vee \mu_1 /r} (e^{2 \pi i k /r} )^{g^\vee 
\mu_{1 + g^\vee /r} /r} \cdots (e^{-2 \pi i k /r} )^{g^\vee \mu_d
/r} \big) \qquad k \in \mh Z .
\]
If gcd$(k,r) = 1$, then the generic points of $T^{P_\mu}_{r,k}$
have $W(P_\mu ,P_\mu )$-stabilizer 
\[
\inp{W_{P_\mu}}{\sigma (\mu )^{1/r}} \cap W(P_\mu ,P_\mu ) \cong
\mh Z / r \mh Z .
\]
Note that for $r' | g^\vee$
\begin{equation}\label{eq:6.49}
T^{P_\mu}_{r',k} \subset T^{P_\mu}_{r,k} \qquad \mr{if}\; r | r' .
\end{equation}
If a point $t \in T^{P_\mu}_{r,k}$ does not lie on any 
$T^{P_\mu}_{r',k'}$ with $r' > r$, then its $W(P_\mu ,P_\mu 
)$-stabilizer may still be larger than $\mh Z / r \mh Z$. 
However, it is always of the form
\[
S^r_{\lambda_1} \times \cdots \times S^r_{\lambda_l} \rtimes 
\mh Z / r \mh Z .
\]
Here the product of symmetric groups is $W(R_\xi)$ from \eqref{eq:2.8},
and $\mf R_\xi  = \Z / r \Z$. With \cite{DeOp2} it follows that the 
intertwiners $\pi (w, P_\mu ,\delta ,t)$ are scalar for 
$w \in S^r_{\lambda_1} \times \cdots \times S^r_{\lambda_l}$ 
and nonscalar for $w \in (\mh Z / r \mh Z) \setminus \{ e \}$. 
Because $\mh Z / r \mh Z$ is cyclic this implies that 
$\pi (P_\mu ,\delta ,t)$ is the direct sum of exactly $r$
inequivalent irreducible representations. 

Different choices of $\sigma (\mu )^{1/r}$ or of $k \in (\mh Z / 
r \mh Z )^\times$ lead to conjugate subvarieties of $T^{P_\mu}$, 
so we have a complete description of $\Irr \big(C_r^* (\mc R ,q
)_{P_\mu}\big)$. To calculate the $K$-theory of this algebra we 
use Theorem \ref{thm:2.15}, which says that (at least modulo torsion) 
it is isomorphic to 
\[
H^*_{W(P_\mu ,P_\mu )} \big( T_u^{P_\mu};\mc L_u \big) \cong
\check H^* \big( T^{P_\mu} \big/ W(P_\mu ,P_\mu ) ;\mc L_u^{W(
P_\mu ,P_\mu )} \big) .
\]
We can endow $T_u^{P_\mu}$ with the structure of a finite 
$W(P_\mu ,P_\mu )$-CW-complex, such
that every $T_{u,r,k}^{P_\mu}$ is a subcomplex. The local 
coefficient system $\mc L_u$ is not very complicated: 
$\mc L_u (B) \cong \mh Z^r$ if and only if $B \setminus \partial B$
consists of generic points in a conjugate of $T_{u,r,k}^{P_\mu}$.
In suitable coordinates the maps $\mc L_u (B \to B')$ are all of the form 
\[
\mh Z^r \to \mh Z^{r/d} : (x_1 ,\ldots ,x_r ) \to 
(x_1 + x_2 + \cdots + x_d ,\ldots ,x_{1+r-d} + \cdots + x_r ) .
\]
Hence the associated sheaf is the direct sum of several subsheaves
$\mf F^\mu_r$, one for each divisor $r$ of gcd$(\mu^\vee )$. The
support of $\mf F^\mu_r$ is
\[
W(P_\mu ,P_\mu ) T_{u,r,1}^{P_\mu} \big/ W(P_\mu ,P_\mu ) \cong
T_u^{P_{\mu^{1/r}}} \big/ Z_{S_n} (\sigma (\mu^{1/r})) 
\]
and on that space it has constant stalk $\mh Z^{\phi (r)}$.
Here $\phi$ is the Euler $\phi$-function, i.e.
\[
\phi (r) = \# \{ m \in \mh Z : 0 \leq m < r , \mr{gcd}(m,r) = 1 \}
= \# (\mh Z / r \mh Z )^\times ,
\]
This is the rank of $\mf F^\mu_r$, because in every point of
$T_{u,r,1}$ we have $r$ irreducible representations, but the ones
corresponding to numbers that are not coprime to $r$ are already
accounted for by the sheaves $\mf F^\mu_{r'}$ with $r' | r$.
We calculate
\begin{multline}
\check H^* \big( T^{P_\mu}_\un / W(P_\mu ,P_\mu ) ;\mc L_u^{W(
P_\mu ,P_\mu )} \big) 
\cong \bigoplus_{r | \mr{gcd}(\mu^\vee )} \check H^* \big( 
T^{P_\mu}_\un / W(P_\mu ,P_\mu ) ; \mf F^\mu_r \big) \\
\cong \bigoplus_{r | \mr{gcd}(\mu^\vee )} \check H^* \big(
T_\un^{P_{\mu^{1/r}}} \big/ Z_{S_n} (\sigma (\mu^{1/r})) ; 
\mh Z^{\phi (r)} \big) 
\cong \bigoplus_{r | \mr{gcd}(\mu^\vee )} \check H^* \big( 
(S^1)^{b(\mu^{1/r}) - 1} ; \mh Z^{\phi (r)} \big) \\
\cong \bigoplus_{r | \mr{gcd}(\mu^\vee )} \mh Z^{\phi (r)
2^{b(\mu^{1/r}) - 1}} 
= \bigoplus_{r | \mr{gcd}(\mu^\vee )} \mh Z^{\phi (r) 
2^{b(\mu^\vee ) -1}} \: = \; 
\mh Z^{\mr{gcd}(\mu^\vee ) 2^{b(\mu^\vee ) -1}} .
\end{multline}
Now Theorem \ref{thm:2.15} says that $K_* \big(C_r^* (\mc R ,q)_{P_\mu}\big)$ 
is also a free abelian group of rank $\mr{gcd}(\mu^\vee) 2^{b(\mu^\vee ) -1}$. 
Summing over partitions $\mu$ of $n$ we find that $K_* \big( 
C_r^* (\mc R ,q) \big)$ is a free abelian group of rank
\[
\sum\nolimits_{\mu \vdash n} \mr{gcd}(\mu^\vee ) 2^{b(\mu^\vee ) -1} =
\sum\nolimits_{\mu \vdash n} \mr{gcd}(\mu ) 2^{b(\mu ) -1} .
\]
From Theorem \ref{thm:2.2} and the case $q=1$ we see that these K-groups
can also be obtained as the K-theory of a disjoint union of compact tori, with
gcd$(\mu)$ tori of dimension $b(\mu) - 1$. This allows us to immediately determine 
$K_0$ and $K_1$ separately as well:
\begin{equation}
\begin{aligned}
& K_0 (C_r^* (\mc R,q)) = \bigoplus\nolimits_{\mu \vdash n , b(\mu) > 1} 
\mh Z^{\mr{gcd}(\mu) 2^{b(\mu ) - 2}} \oplus 
\bigoplus\nolimits_{\mu \vdash n , b(\mu) = 1} \Z^{\mr{gcd}(\mu)} , \\
& K_1 (C_r^* (\mc R,q)) = \bigoplus\nolimits_{\mu \vdash n, b(\mu) > 1} 
\mh Z^{\mr{gcd}(\mu) 2^{b(\mu ) - 2}} .
\end{aligned}
\end{equation}

\subsection{Type $SO_{2n+1}$} \
\label{par:SO}

The root systems of type $B_n$ are more complicated than those 
of type $A_n$ because there are roots of different lengths. This 
implies that the associated root data allow label functions 
which have three independent parameters. Detailed information
about the representations of type $B_n$ affine Hecke algebras
is available from \cite{Slo2}.

Consider the root datum for the special orthogonal group $SO_{2n+1}$:
\[
\begin{aligned}
& X = Q = \mh Z^n \\
& Y = \mh Z^n \quad Q^\vee = \{ y \in Y : y_1 + \cdots + y_n 
  \; \mr{even} \} \\
& T = (\mh C^\times)^n \quad t = (t_1, \ldots, t_n) = 
  (t(e_1), \ldots, t(e_n)) \\
& R = \{ x \in X : \norm{x} = 1 \;\mr{or}\; \norm{x} = \sqrt 2 \} ,\quad \alpha_0 = e_1 \\
& R^\vee = \{ x \in X : \norm{x} = 2 \;\mr{or}\; \norm{x} = 
  \sqrt 2 \} ,\quad \alpha_0^\vee = 2 e_1 \\
& \Delta = \{ \alpha_i = e_i - e_{i+1} : i=1, \ldots, n-1 \} \cup
  \{\alpha_n = e_n\} \\
& s_i = s_{\alpha_i} \quad s_0 = t_{\alpha_0} s_{\alpha_0} : x \to
  x + \alpha_0 - \inp{\alpha_0^\vee}{x} \alpha_0  \\
& W = \langle s_1, \ldots, s_n | s_j^2 = (s_i s_j )^2 = (s_i
  s_{i+1})^3 = (s_{n-1} s_n )^4 = e : i \leq n-2, |i-j| > 1 \rangle \\
& S^\af = \{ s_0 ,s_1, \ldots ,s_{n-1} ,s_n \} \quad
  \Omega = \{ e\} \\
& W^e = W^\af = \langle W, s_0 | s_0^2 = 
  (s_0 s_i)^2 = (s_0 s_1)^4 = e : i \geq 2 \rangle 
\end{aligned}
\]
For a generic parameter function we have different parameters
$q_0 = q(s_0 ) , q_1 = q(s_i )$  for $1 \leq i < n$ and $q_2 = q(s_n )$. 

The finite reflection group $W = W (B_n )$ is naturally 
isomorphic to $(\mh Z / 2 \mh Z )^n \rtimes S_n$. Let $\mu \vdash n$
and consider a point
\begin{equation}\label{eq:6.7}
t = \big( (t_1^{\pm})^{\mu_1} \cdots (t_{n- \mu_{d-1} - \mu_d}^\pm )^{\mu_{d-2}} 
(1)^{\mu_{d-1}} (-1)^{\mu_d} \big) \in T ,
\end{equation}
where $(t_1^\pm )^{\mu_1}$ means that $\mu_1$ coordinates are equal to
$t_1$ or $t_1^{-1}$, while the other $n-\mu_1$ coordinates of $t$ are 
different. The stabilizer $W_t$ of $t$ is isomorphic to 
\begin{equation}\label{eq:6.8}
S_{\mu_1} \times \cdots \times S_{\mu_{d-2}} \times
W (B_{\mu_{d-1}}) \times W (B_{\mu_d}) . 
\end{equation}
Notice that this is a Weyl group, generated by the reflections it contains.

\begin{itemize}
\item \textbf{\large case $\mb{q_0 = q_1 = q_2 = 1}$}
\end{itemize}
In view of \eqref{eq:2.12} we want to determine the extended
quotient $\widetilde{T_\un} / W$. Therefore we recall the explicit
classification of conjugacy classes in $W$ in terms of bipartitions, which be 
found (for example) in \cite{Car}. 
We already know that the quotient of $W$ by the normal subgroup 
$(\mh Z / 2 \mh Z )^n$ of sign changes is isomorphic to $S_n$, and that 
conjugacy classes in $S_n$ are parametrized by partitions of $n$. 
So we wonder what the different conjugacy classes in 
$(\mh Z / 2 \mh Z )^n \sigma  (\mu )$ are, for $\mu \vdash n$. 
To handle this we introduce some notation, assuming that 
$|\mu | + |\lambda | = n$ and $|\mu | + |\lambda | + |\rho | = n'$:
\begin{equation}\label{eq:6.52}
\begin{array}{lll}
\epsilon_I & = & 
  \prod_{i \in I} s_{e_i} \quad I \subset \{1, \ldots, n\} \\
I_\lambda & = & 
  \{ 1, 1 + \lambda_1, 1 + \lambda_1 + \lambda_2, \cdots \} 
  \quad \lambda = (\lambda_1, \lambda_2, \lambda_3 ,\ldots ) \\
\sigma'(\lambda) & = & \epsilon_{I_\lambda} \sigma(\lambda) \in
  W (B_{|\lambda |}) \\
\sigma(\mu,\lambda) & = &
  \sigma(\mu) \: (m \to m - |\lambda| \: \mr{mod} \:n) \,\sigma'
  (\lambda)\, (m \to m + |\lambda| \: \mr{mod} \:n) \\
\sigma(\mu, \lambda, \rho) & = &
  \sigma(\mu,\lambda) \: (m \to m - |\rho | \: \mr{mod} \:n') 
  \,\sigma'(\rho )\, (m \to m + |\rho | \: \mr{mod} \:n')  
\end{array}
\end{equation}
Let $I \subset \{1, \ldots, m \}$ and $J \subset \{m+1, \ldots, 2m \}$. 
It is easily verified that $\epsilon_I (1 \, 2 \cdots m)$ is 
conjugate to $\mu_J (m+1 \, m+2 \cdots 2m)$ if and only if $|I| + |J|$ is even. 
Therefore the conjugacy classes in $W (B_n)$ are
parametrized by ordered pairs of partitions of total weight $n$.
Explicitly $(\mu ,\lambda )$ corresponds to $\sigma (\mu ,\lambda )$
as in \eqref{eq:6.52}. The set $T^{\sigma (\mu ,\lambda )}$ and the
group $Z_{W_0 (B_n )}(\sigma (\mu ,\lambda ))$ are both the direct
product of the corresponding objects for the blocks of $\mu$ and
$\lambda$, i.e. for the parts $(m,m,\ldots ,m)$. The centralizer of
$\sigma ((m)^k )$ in $W (B_{km})$ is generated by
$(1 \; 2 \cdots m) ,\, \epsilon_{\{1,2,\ldots ,m \}}$ and the 
transpositions of cycles
\begin{equation}\label{eq:6.53}
(am+1 \; am+m+1) (am+2 \; am+m+2) \cdots (am+m \; am+2m) ,
\end{equation}
where $0 \leq a \leq k-2$. It follows that
\begin{equation}\label{eq:6.54}
\begin{array}{l}
Z_{W (B_{km})} (\sigma ((m)^k )) \quad \cong \quad W (B_k ) \ltimes (\Z / m \Z )^k , \\ 
\big( (\mh C^\times)^{km} \big)^{\sigma ((m)^k)} \quad = \quad
\big\{ \big( (t_1)^m (t_{m+1})^m \cdots (t_{km+1-m})^m \big) : 
t_i \in \mh C^\times \big\} , \\
\big( (S^1)^{km} \big)^{\sigma ((m)^k)} \big/ Z_{W_0 (B_{km})} 
(\sigma ((m)^k )) \; \cong \; (S^1)^k / W(B_k) \; \cong \; [-1,1]^k \big/ S_k .
\end{array}
\end{equation}
Now consider the following element of $W (B_{km})$:
\[
\sigma' ((m)^k ) = \epsilon_{\{1,m+1,\ldots,km+1-m\}} \: (1\; 2 \cdots m) 
(m+1 \cdots 2m) \cdots (km+1-m \cdots km) .
\]
It has only $2^k$ fixpoints, namely 
\begin{equation}\label{eq:6.6}
\big( (\pm 1)^m (\pm 1)^m \cdots (\pm 1)^m \big) .
\end{equation}
The centralizer of $\sigma' ((m)^k )$ is generated by $\epsilon_{\{1\}} 
(1 \; 2 \cdots m) ,\, \epsilon_{\{1,2, \ldots, m\}}$ and the
elements \eqref{eq:6.53}. The latter two generate a subgroup isomorphic to $W(B_k)$,
which fits in a short exact sequence
\begin{equation}\label{eq:6.5}
1 \to W(B_k) \to Z_{W (B_{mk})} (\sigma' ((m)^k )) \to (\Z / m \Z)^k \to 1 ,
\end{equation}
where the first factor $\Z / m \Z$ is generated by the image of $\epsilon_{\{1\}} 
(1 \; 2 \cdots m)$. We find
\begin{equation}\label{eq:6.55}
\big( (S^1)^{km} \big)^{\sigma' ((m)^k)} \big/ Z_{W (B_{mk})} 
(\sigma' ((m)^k )) \cong \{ (1)^{am} (-1)^{(k-a)m} : 
0 \leq a \leq k \} .
\end{equation}
Now we can see what $T_\un^{\sigma (\mu ,\lambda)} \big/ Z_W 
(\sigma (\mu, \lambda))$ looks like. Its number of components 
$N(\lambda )$ depends only on $\lambda$, and all these components 
are mutually homeomorphic contractible orbifolds, the shape and
dimension being determined by $\mu$. More precisely, for every 
block of $\mu$ of width $k$ we get a factor $[-1,1]^k / S_k$, and 
for every block of $\lambda$ of width $l$ we must multiply the 
number of components by $l+1$. Alternatively, we can obtain the same
space (modulo the action of $W$) as
\begin{equation}\label{eq:6.56}
\begin{split}
T_\un^{\sigma(\mu,\lambda)} & \big/ Z_{W(B_n)} (\sigma(\mu,\lambda))\; 
 =\; \bigsqcup\nolimits_{\lambda_1 \cup \lambda_2 = \lambda} 
  T_{\un,c}^{\sigma(\mu,\lambda_1,\lambda_2)} \big/ Z_{W (B_n)} 
  (\sigma(\mu,\lambda_1,\lambda_2))  \\
 & =\; \bigsqcup\nolimits_{\lambda_1 \cup \lambda_2 = \lambda} \big( 
  (S^1)^{|\mu|} \big)^{\sigma(\mu)} \!\!\big/ Z_{W (B_{|\mu|})}
  (\sigma (\mu)) \: (-1)^{|\lambda_1|} \: (1)^{|\lambda_2|} \\
 & =\;\big( [-1,1]^{|\mu|} \big)^{\sigma(\mu)} \!\!\big/ Z_{S_{|\mu|}}
  (\sigma (\mu)) \times \bigsqcup\nolimits_{\lambda_1 \cup \lambda_2 = \lambda}  
  (-1)^{|\lambda_1|} \: (1)^{|\lambda_2|} ,
\end{split}
\end{equation}
where the subscript $c$ means that we take only
the connected component containing the point
$\big( (1)^{|\mu|} (-1)^{|\lambda_1|} (1)^{|\lambda_2|} \big)$.

In effect we parametrized the components of the extended quotient
$\widetilde{T_\un} / W$ by ordered triples of partitions 
$(\mu,\lambda_1,\lambda_2)$ of total weight $n$, and every such
component is contractible. In combination with \eqref{eq:6.8} this
shows that the conditions of Theorem \ref{thm:2.3} are fulfilled.

Denote the number of ordered $k$-tuples of partitions of total weight 
$n$ by $\mc P (k,n)$. Now Theorem \ref{thm:2.3} says that
\begin{equation}\label{eq:6.14}
K_* \big( C_r^* (W^e) \big) =
\check H^* \big( \widetilde{T_\un} / W ;\Z \big) =
\check H^0 \big( \widetilde{T_\un} / W ;\Z \big) \cong \Z^{\mc P (3,n)} .
\end{equation}

\begin{itemize}
\item \textbf{\large generic case}
\end{itemize}
The inequivalent subsets of $\Delta$ are parametrized by partitions
$\mu$ of weight at most $n$.
\[
\begin{aligned}
& P_\mu = \Delta \setminus \{\alpha_{\mu_1}, \alpha_{\mu_1 +
 \mu_2}, \ldots, \alpha_{|\mu|} \} \\
& R_{P_\mu} \cong (A_{n-1})^{m_n} \times \cdots \times (A_1)^{m_2} 
\times B_{n - |\mu|} \\
& R_{P_\mu}^\vee \cong (A_{n-1})^{m_n} \times \cdots \times 
(A_1)^{m_2} \times C_{n - |\mu|} \\
& X^{P_\mu} \cong \mh Z (e_1 + \cdots + e_{\mu_1}) /\mu_1 + \cdots 
  + \mh Z (e_{|\mu|+1- \mu_d} + \cdots + e_{|\mu|}) /\mu_d \\
& X_{P_\mu} \cong ( \mh Z^n / \mh Z (e_1 + \cdots + e_n ) )^{m_n}
  + \cdots + (\mh Z^2 / \mh Z (e_1 + e_2 ) )^{m_2} + \mh Z^{n - |\mu |} \\
& Y^{P_\mu} = \mh Z(e_1 + \cdots + e_{\mu_1}) + \cdots + \mh Z 
  (e_{|\mu|+1-\mu_d} + \cdots +e_{|\mu|}) \\
& Y_{P_\mu} = \{ y \in \mh Z^n : y_1 + \cdots + y_{\mu_1} = \cdots 
  = y_{|\mu|+1-\mu_d} + \cdots + y_{|\mu|} = 0 \} \\
& T^{P_\mu} = \{ (t_1 )^{\mu_1} (t_{\mu +1})^{\mu_2} \cdots
  (t_{|\mu |})^{\mu_d} (1)^{n - |\mu |} : t_i \in \mh C^\times \} \\
& T_{P_\mu} = \{ t \in (\mh C^\times )^n : t_1 \cdots t_{\mu_1} =
  t_{\mu_1 + 1} \cdots t_{\mu_1 + \mu_2} = \cdots = t_{|\mu | + 1 - 
  \mu_d} \cdots t_{|\mu |} = 1 \} \\
& K_{P_\mu} = \{ t \in T^{P_\mu} : t_1^{\mu_1} = \cdots = 
  t_{|\mu|}^{\mu_d} = 1 \} \\
& W_{P_\mu} \cong S_n^{m_n} \times \cdots \times S_2^{m_2} \times
  W (B_{n - |\mu |}) \\
& W(P_\mu ,P_\mu ) \cong W (B_{m_n}) \times \cdots \times W (B_{m_2}) \times
  W (B_{m_1}) 
\end{aligned}
\]
We see that $\mc R_{P_\mu}$ is the product of various root data
of type $SL_m$ and one factor $\mc R (SO_{2(n - |\mu |) + 1})$. 
Hence $\mc H_{P_\mu}$ is the tensor product of a type
$A$ part and a type $B$ part. From our study of $\mc R (SL_m)$ 
we recall that the discrete series representations of the type 
$A$ part of $\mc H_{P_\mu}$ are in bijection with $K_{P_\mu}$. 
From \cite[Proposition 4.3]{HeOp} and \cite[Appendix A.2]{Opd-Sp}
we know that the
residual points for $\mc R (SO_{2(n - |\mu |)+1},q)$ are parametrized 
by ordered pairs $(\lambda_1 ,\lambda_2 )$ of total weight $n - |\mu |$.
The unitary part of such a residual point is in the component we
indicated in \eqref{eq:6.56}. Let $RP (\mc R ,q)$ denote the 
collection of residual points for the pair $(\mc R ,q)$. The above
gives canonical bijections
\begin{multline}\label{eq:6.57}
\bigsqcup_{t \in RP (\mc R_{P_\mu}, q_{P_\mu})} t T_\un^{P_\mu} \big/ 
\mc W_{P_\mu P_\mu} \cong \; \bigsqcup_{t \in RP (\mc R 
(SO_{2(n - |\mu |)+1}, q))} t T_\un^{P_\mu} \big/ W (P_\mu ,P_\mu ) \\
\cong \; T_\un^{P_\mu} \big/ Z_{W_0 (B_{|\mu |})} (\sigma (\mu )) 
  \times \bigsqcup_{(\lambda_1 ,\lambda_2) : |\lambda_1 | + 
  |\lambda_2 | = n} (-1)^{|\lambda_1 |} (1)^{|\lambda_2 |} .
\end{multline}

\begin{thm}\label{thm:6.2}
\enuma{
\item For generic $q ,\; C_r^* (\mc R(SO_{2n+1}),q)$ is Morita equivalent with the 
commutative $C^*$-algebra of continuous functions on \eqref{eq:6.57}.
\item $K_1 \big( C_r^* (\mc R (SO_{2n+1}),q) \big) = 0$ and $K_0 \big( C_r^* 
(\mc R (SO_{2n+1},q)) \big)$ is a free abelian group of rank $\mc P (3,n)$.
}
\end{thm}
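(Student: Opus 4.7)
The plan is to prove (a) first and then deduce (b) as a counting exercise, using either (a) directly or Theorem \ref{thm:2.2} to transfer to the case $q=1$.

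For part (a), I would go through the decomposition \eqref{eq:2.4} one summand $P_\mu$ at a time. As observed in the table preceding the theorem, $\mc R_{P_\mu}$ is a product of type $A$ factors (which are handled as in Paragraph \ref{par:GLn}) and a single type $B$ factor $\mc R(SO_{2(n-|\mu|)+1})$. The discrete series of the type $A$ part are parametrized by $K_{P_\mu^A}$; those of the type $B$ factor are classified in \cite{Slo2,CiOp2} and for generic $q$ their central characters are exactly the $W$-orbits of residual points, which by \cite{HeOp,Opd-Sp} are parametrized by ordered pairs $(\lambda_1,\lambda_2)$ with $|\lambda_1|+|\lambda_2|=n-|\mu|$. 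Combining these gives the bijection \eqref{eq:6.57}. The key point to verify is that at every $\xi=(P_\mu,\delta,t)\in\Xi_\un$ the R-group $\mf R_\xi$ from \eqref{eq:2.8} is trivial; by the R-group analysis of \cite{DeOp2} together with the generic-parameter reduction of \cite{Slo2}, for generic $q$ all reducibility in type $B$ comes from $W(R_\xi)$ alone, and in the type $A$ slots the R-group was already shown to be trivial in \eqref{eq:6.48}. Once $\mf R_\xi=1$ for all $\xi$, all intertwining operators $\pi(g,\xi)$ with $g\in\mc G_\xi$ act by scalars on the irreducible $\pi(\xi)$, and \eqref{eq:2.1} shows that each summand of \eqref{eq:2.4} is Morita equivalent via the diagonal embedding $C(X)\hookrightarrow M_N(C(X))$ to $C\bigl(T^{P_\mu}_\un/\mc G_{P,\delta}\bigr)$. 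Assembling these over all $P_\mu$ and all $\delta$ and using \eqref{eq:6.57} yields the claimed Morita equivalence with continuous functions on \eqref{eq:6.57}.

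For part (b), by Morita invariance of topological K-theory, it suffices to compute the K-theory of the commutative $C^*$-algebra of continuous functions on the space \eqref{eq:6.57}. That space is a finite disjoint union, indexed by ordered triples $(\mu,\lambda_1,\lambda_2)$ with $|\mu|+|\lambda_1|+|\lambda_2|=n$, of copies of
\[
\bigl((S^1)^{|\mu|}\bigr)^{\sigma(\mu)}\bigm/Z_{W(B_{|\mu|})}(\sigma(\mu)).
\]
By \eqref{eq:6.54}, each such orbifold is homeomorphic to a finite product of spaces of the form $[-1,1]^k/S_k$ (one factor for each block of equal parts in $\mu$), and every such factor is contractible. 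Hence each component of \eqref{eq:6.57} is contractible, and therefore
\[
K^0\bigl(\text{component}\bigr)=\Z,\qquad K^1\bigl(\text{component}\bigr)=0.
\]
Summing over the $\mc P(3,n)$ components gives $K_0=\Z^{\mc P(3,n)}$ and $K_1=0$.

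As a sanity check, one may instead invoke Theorem \ref{thm:2.2} to replace $q$ by $1$, and then \eqref{eq:6.14} (together with the identification of the components of $\widetilde{T_\un}/W$ with triples of partitions) reproduces the same answer. The main obstacle is the first step: verifying that the R-groups in type $B_n$ are trivial for generic parameters across all parabolics $P_\mu$ and all discrete series $\delta$ of $\mc H_{P_\mu}$. Once this is in hand, both the Morita equivalence and the K-theory computation are immediate from the contractibility of the components.
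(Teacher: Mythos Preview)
Your plan for part (b) matches the paper's argument exactly: once the Morita equivalence is established, each component of \eqref{eq:6.57} is contractible by \eqref{eq:6.54}, and the count of components is $\mc P(3,n)$.

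For part (a), your outline is correct but the route differs from the paper's in an instructive way. You propose to check directly, via \cite{DeOp2} and \cite{Slo2}, that the R-groups $\mf R_\xi$ vanish for generic $q$; you rightly flag this as the main obstacle, and in fact neither of those references settles it outright (\cite{DeOp2} sets up the R-group machinery but does not compute all type $B$ cases, and \cite{Slo2} concerns the Springer correspondence rather than R-groups). The paper avoids this computation entirely by a counting argument that leverages Theorem~\ref{thm:1.5}. Namely, one first identifies the space \eqref{eq:6.57} with the extended quotient $\widetilde{T_\un}/W$ described in \eqref{eq:6.56}. For any $u\in T_\un$, Clifford theory says the fibre of $\widetilde{T_\un}/W \to T_\un/W$ over $Wu$ has exactly as many points as there are irreducible tempered $\mc O(T)\rtimes W$-modules with central character $Wu$; by Theorem~\ref{thm:1.5} this equals the number of irreducible tempered $\mc H(\mc R,q)$-modules with central character in $WuT_\rs$. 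Since every point of \eqref{eq:6.57} already supports at least one irreducible $C_r^*(\mc R,q)$-module (by Theorem~\ref{thm:2.1}), matching the counts forces each $\pi(P_\mu,\delta,t)$ to be irreducible. Triviality of the R-groups and scalarity of the self-intertwiners then follow as a consequence rather than as an input.

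The upshot: your approach would work if you could carry out the R-group check, but the paper's counting trick via Theorem~\ref{thm:1.5} gives the same conclusion with no type-by-type analysis. The paper is also slightly more explicit than your ``diagonal embedding'' remark about why scalarity of the intertwiners yields the Morita equivalence: it exhibits a closed fundamental domain $D_{P_\mu,\delta}\subset T^{P_\mu}_\un$ homeomorphic to the quotient, and shows that restriction to $D_{P_\mu,\delta}$ identifies the summand \eqref{eq:6.2} with $C(D_{P_\mu,\delta})\otimes\End_\C(\pi(P_\mu,\delta,t))$.
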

\begin{proof}
(a) First we note that \eqref{eq:6.57} can be identified with the extended quotient 
$\widetilde{T_\un} / W$ described in \eqref{eq:6.56} and the subsequent lines.

Fix any $u \in T_\un$. The fibre over $u$ of the projection
\[
p : \widetilde{T_\un} / W \to T_\un / W 
\]
is in bijection with the set of conjugacy classes of $W$. By Clifford theory
$|p^{-1} (Wu)|$ is also the number of inequivalent irreducible representations of
$C(T_\un) \rtimes W$ with central character $Wu$. Equivalently, $|p^{-1}(Wu)|$
is the number of inequivalent tempered irreducible representations of $\mc O (T) \rtimes W$
with central character $Wu$. By Theorem \ref{thm:1.5} the latter equals the number
of inequivalent irreducible tempered $\mc H (\mc R,q)$-representations with
central character in $W u T_\rs$.

By Theorem \ref{thm:2.1} every point of \eqref{eq:6.57} is the 
$Z(C_r^* (\mc R,q))$-character of at least one irreducible $C_r^* (\mc R,q)$-representation.
The projection $p'$ from \eqref{eq:6.57} to $T / W$ corresponds to restriction
from $Z(C_r^* (\mc R,q)) \cong C(\Xi_\un / \mc G)$ to $Z(\mc H (\mc R,q)) \cong \mc O (T/W)$.

Suppose that a point of $p'^{-1} (W u T_\rs)$ would carry more than one inequivalent
irreducible $C_r^* (\mc R,q)$-representation. Then the inverse image of $W u T_\rs$ under
\[
\Irr (C_r^* (\mc R,q)) = \Irr_{\mr{temp}}(\mc H (\mc R,q)) \to T / W 
\]
would have more than $|p^{-1}(u)|$ elements. This would contradict what we 
concluded above, using Theorem \ref{thm:1.5}. Thus every $\pi (P_\mu,\delta,t)$ with
$(P_\mu,\delta,t) \in \Xi_\un$ is irreducible and \eqref{eq:6.57} is exactly the space
$\Irr (C_r^* (\mc R,q))$.

When we compare this with Theorem \ref{thm:2.1} and \eqref{eq:2.1}, we see that all 
intertwining operators $\pi (g,P_\mu,\delta,t)$ with $g (P_\mu,\delta,t) = (P_\mu,\delta,t)$
must be scalar. 
Recall from \eqref{eq:2.4} that every indecomposable direct summand of
$C_r^* (\mc R,q)$ is of the form 
\begin{equation}\label{eq:6.2}
C \big( T^{P_\mu}_\un ; \End_\C (\pi (P_\mu,\delta,t) ) \big)^{\mc G_{P\mu,\delta}}.
\end{equation}
From \eqref{eq:6.55} we know that the space $T^{P_\mu}_\un / \mc G_{P\mu,\delta}$ is a
direct product of factors \\ 
$(S^1)^k / W(B_k) \cong [-1,1] / S_k$. We note that
\[
\{ (z_1,z_2,\ldots,z_k ) \in (S^1)^k : \Im (z_i) \geq 0, \Re (z_1) \geq \Re(z_2)
\geq \cdots \geq \Re (z_k) \}
\]
is a closed, connected fundamental domain for action of $W(B_k)$ on $(S^1)^k$. With this
it is easy to find a closed fundamental domain $D_{P_\mu,\delta}$ for the action of 
$\mc G_{P_\mu,\delta}$ on $T^{P_\mu}_\delta$, such that $D_{P_\mu,\delta}$ is homeomorphic
to $T^{P_\mu}_\delta / \mc G_{P_\mu,\delta}$. Then restriction from $T^{P_\mu}_\un$ to
$D_{P_\mu,\delta}$ gives a monomorphism of $C^*$-algebras from \eqref{eq:6.2} to
\[
C \big( D_{P_\mu,\delta} ; \End_\C (\pi (P_\mu,\delta,t)) \big) =
C ( D_{P_\mu,\delta} ) \otimes \End_\C (\pi (P_\mu,\delta,t)) . 
\]
It is surjective because the intertwining operators $\pi (g,P_\mu,\delta,t), \; 
g \in \mc G_{P_\mu,\delta}$, from \eqref{eq:2.9}, depend continuously on 
$t \in T^{P_\mu}_\un$ and are scalar multiples of the identity whenever they map a 
representation to itself. Hence $C_r^* (\mc R,q)$ is Morita equivalent with
$\bigoplus_{(P_\mu,\delta) / \mc G} C(D_{P_\mu,\delta})$, as required.

(b) By the Serre--Swan Theorem $K_* (C_r^* (\mc R,q))$ is the topological K-theory
of the underlying space \eqref{eq:6.57}. Since every connected component of this
space is contractible, $K_1 (C_r^* (\mc R,q)) = 0$ and $K_0 (C_r^* (\mc R,q))$ is
a free abelian group whose rank equals the number of connected components of 
\eqref{eq:6.57}. In the lines following \eqref{eq:6.56} we showed that that number
is $\mc P (3,n)$. By Theorem \ref{thm:2.2} these K-groups are independent of
the parameters $q$.
\end{proof}

\subsection{Type $Sp_{2n}$} \

The root datum for the symplectic group $Sp_{2n}$ is dual to that for 
$SO_{2n+1}$. Concretely, $\mc R (Sp_{2n})$ is given by: 
\[
\begin{aligned}
& X = \{ y \in Y : y_1 + \cdots + y_n \; \mr{even} \} ,\quad Q = \Z^n \\
& Y = Q^\vee = \mh Z^n \\
& T = (\mh C^\times)^n \quad t = (t_1, \ldots, t_n) = 
  (t(e_1), \ldots, t(e_n)) \\
& R = \{ x \in X : \norm{x} = 2 \;\mr{or}\; \norm{x} = \sqrt 2 \} ,\quad \alpha_0 = e_1 + e_2 \\
& R^\vee = \{ x \in X : \norm{x} = 1 \;\mr{or}\; \norm{x} = 
  \sqrt 2 \} ,\quad \alpha_0^\vee = e_1 + e_2 \\
\end{aligned}
\]
\[
\begin{aligned}  
& \Delta = \{ \alpha_i = e_i - e_{i+1} : i=1, \ldots, n-1 \} \cup
  \{\alpha_n = 2 e_n\} \\
& s_i = s_{\alpha_i} \quad s_0 = t_{\alpha_0} s_{\alpha_0} = t_{e_1} s_{\alpha_0} t_{-e_1} : 
  x \to x + \inp{\alpha_0^\vee}{x} \alpha_0  \\
& W = \langle s_1, \ldots, s_n | s_j^2 = (s_i s_j )^2 = (s_i
  s_{i+1})^3 = (s_{n-1} s_n )^4 = e : i \leq n-2, |i-j| > 1 \rangle \\
& S^\af = \{ s_0 ,s_1, \ldots ,s_{n-1} ,s_n \} \quad
  \Omega = \{ e, t_{e_1} s_{2 e_1} \} \\
& W^\af = \langle W, s_0 | s_0^2 = 
  (s_0 s_i)^2 = (s_0 s_2)^3 = e : i \neq 2 \rangle  \quad W^e = W^\af \rtimes \Omega
\end{aligned}
\]
For a generic parameter function we have two independent parameters $q_1 = q(s_1)$
and $q_2 = q(s_n)$.

The groups $X ,W$ and $W^e$ are exactly the same as for $\mc R(SO_{2n+1})$. Everything
that we said in Paragraph \ref{par:SO} about the stabilizers in $W$ of points of $T$
obviously is valid here as well. In particular, for $q=1$ the algebra $\mc H (\mc R (Sp_{2n}),1)$
is identical to $\mc H (\mc R (SO_{2n+1}),1)$, and the entire analysis of the K-theory 
of its $C^*$-completion can be found in the previous paragraph.

For all other $q$ we can use Theorem \ref{thm:2.2}. Thus we get
\begin{multline*}
K_* \big( C_r^* (\mc R(Sp_{2n}),q) \big) \cong K_* \big( C_r^* (\mc R(Sp_{2n}),1) \big) \\
= K_* \big( C_r^* (\mc R(SO_{2n+1}),1) \big) \cong K_* \big( C_r^* (\mc R(SO_{2n+1}),q) \big) .  
\end{multline*}
The last group is the one we actually computed, for generic parameters.
Let us phrase the results explicitly:
\begin{equation}
K_0 \big( C_r^* (\mc R(Sp_{2n}),q) \big) \cong \Z^{\mc P (3,n)} ,\qquad
K_1 \big( C_r^* (\mc R(Sp_{2n}),q) \big) = 0 .
\end{equation}

\subsection{Type $SO_{2n}$} \
\label{par:SOeven}

The root datum for the even special orthogonal group $SO_{2n}$  
has groups contained in those for the root datum of type $SO_{2n+1}$.
\[
\begin{aligned}
& X = \mh Z^n \quad Q = \{ y \in Y : y_1 + \cdots + y_n 
  \; \mr{even} \} \\ 
& Y = \mh Z^n \quad Q^\vee = \{ y \in Y : y_1 + \cdots + y_n 
  \; \mr{even} \} \\
& T = (\mh C^\times)^n \quad t = (t_1, \ldots, t_n) = 
  (t(e_1), \ldots, t(e_n)) \\
& R = \{ x \in X : \norm{x} = \sqrt 2 \} ,\quad \alpha_0 = e_1 + e_2 \\
& R^\vee = \{ x \in X : \norm{x} = 
  \sqrt 2 \} ,\quad \alpha_0^\vee = e_1 + e_2 \\
& \Delta = \{ \alpha_i = e_i - e_{i+1} : i=1, \ldots, n-1 \} \cup
  \{\alpha_n = e_{n-1} + e_n\} \\
& s_i = s_{\alpha_i} \quad s_0 = t_{\alpha_0} s_{\alpha_0} = t_{e_1} s_{\alpha_0} t_{-e_1} 
: x \to  x + \alpha_0 - \inp{\alpha_0^\vee}{x} \alpha_0  \\
& W = \langle s_1, \ldots, s_n | s_j^2 = (s_i s_j )^2 = (s_i
  s_{i+1})^3 = (s_{n-2} s_n )^4 = e : i \leq n-2, |i-j| > 1 \rangle \\
& S^\af = \{ s_0 ,s_1, \ldots ,s_{n-1} ,s_n \} \quad
  \Omega = \{ e, t_{e_1} s_{e_1} s_{e_n} \} \\
& W^\af = \langle W, s_0 | s_0^2 = 
  (s_0 s_i)^2 = (s_0 s_2)^3 = e : i \neq 2 \rangle \subsetneq W^e
\end{aligned}
\]
When $n > 2$ all the simple affine reflections are conjugate in $W^e$, and
\[
q(s_i) = q \quad i = 0,1,\ldots,n 
\]
for every parameter function. For $n=2$ the root system $R \cong A_1 \times A_1$ is
reducible, there is an additional simple affine reflection and there are more possible 
parameter functions. For $n=1 ,\; \mc R (SO_2)$ is the root datum of a one-dimensional
torus, in particular $W = 1$.

The based root datum $\mc R (SO_{2n})$ has one nontrivial automorphism, which exchanges
the roots $\alpha_{n-1}$ and $\alpha_n$. It is easily seen that
\[
W^e (SO_{2n}) \rtimes \mr{Aut}(\mc R (SO_{2n})) \cong W^e (SO_{2n+1}) . 
\]
With Theorem \ref{thm:2.2} we conclude that, for every equal parameter function $q$,
\begin{equation}
\begin{split}
K_* \big( C_r^* (\mc R (SO_{2n}),q) \rtimes \mr{Aut}(\mc R (SO_{2n})) \big) \cong
K_* \big( W^e (SO_{2n}) \rtimes \mr{Aut}(\mc R (SO_{2n})) \big) \\
= K_* \big( C_r^* (W^e (SO_{2n+1})) \big) \cong
K_* \big( C_r^* (\mc R (SO_{2n+1}), q) \big) .
\end{split}
\end{equation}
Unfortunately no such shortcut is available for $K_* \big( C_r^* (\mc R (SO_{2n}),q) \big)$.
Therefore we will just compute $K_* \big( W^e (SO_{2n}) \big)$ by hand, in several steps:
\begin{itemize}
\item We determine the extended quotient $T_\un /\!/ W(D_n)$ and its cohomology.
\item We analyse the (elliptic) representations of the $W(D_n)$-isotropy groups of
points of $T$.
\item We relate the second bullet to the sheaf $\mf{L}_u^{W(D_n)}$ on $T_\un / W(D_n)$.
\item Then we are finally in the right position to apply Theorem \ref{thm:2.15}.
\end{itemize}
The finite reflection group $W(D_n)$ is naturally isomorphic to the index two subgroup
of $W(B_n) = W(C_n)$ consisting of those elements that involve an even number of
sign changes. In other words, let $(\Z / 2 \Z)^n_{ev}$ be the kernel of the summation
map $(\Z / 2\Z)^n \to \Z / 2 \Z$, then 
\[
W(D_n) = (\Z / 2 \Z)^n_{ev} \rtimes S_n .
\]
The conjugacy classes in $W(D_n)$ are similar to, but slightly different from those 
in $W(B_n)$. We rephrase Young's parametrization in the notations from \eqref{eq:6.52}.
For every bipartition $(\mu,\lambda)$ of $n$ where $\lambda$ has an even number
of parts, $\sigma (\mu,\lambda)$ represents one class in $W(D_n)$. 
Suppose now that $\mu \vdash n$ has only even terms, and define
\begin{equation}\label{eq:6.20}
\sigma'' (\mu) = \sigma (\mu) \epsilon_{\{n-1,n\}} = 
\epsilon_{\{n\}}^{-1} \sigma (\mu) \epsilon_{\{n\}} .
\end{equation}
Then $\sigma'' (\mu)$ represents a class of $W(D_n)$ different from the above. The 
$\sigma (\mu,\lambda)$ and the $\sigma'' (\mu)$ form a set of representatives for
all conjugacy classes of $W(D_n)$.

In the representation theory of classical groups some almost direct products of root
data of type $D$ arise \cite{Gol,Hei}. Therefore it will be useful to investigate a more 
general situation, as in Paragraph \ref{par:almost}. Fix $n_1, \ldots, n_d$ with 
$n_1 + \cdots + n_d = n$ and consider the root datum
\[
\mc R'_{\vec n} = \mc R (SO_{2 n_1}) \times \cdots \times \mc R (SO_{2 n_d}) .
\]
Let $W'_{\vec n} = W(D_{\vec n}) \rtimes \Gamma$ be as in \eqref{eq:1.43}, so
$\Gamma \cong (\Z / 2 \Z)^d_{ev}$. The conjugacy classes for $W'_{\vec n}$ are a mixture 
of those for $W(D_n)$ and for $W(B_{\vec n})$. Let us analyse them and the extended
quotient $T_\un /\!/ W'_{\vec n}$ together. 

Recall that for $w \in W(B_{\vec n})$ the groups $T_\un^w$ and $Z_{W(B_{\vec n})}(w)$ 
were already computed in Paragraph \ref{par:SO}, see in particular \eqref{eq:6.54}, 
\eqref{eq:6.6} and \eqref{eq:6.5}. We say that $\vec \mu \vdash \vec n$ if $\vec \mu$ is a
$d$-tuple of partitions $(\mu^{(1)}, \ldots, \mu^{(d)})$ with $|\mu^{(i)}| = n_i$, and 
that $(\vec \mu, \vec \lambda) \vdash \vec n$ if $\vec \lambda = (\lambda^{(1)}, \ldots,
\lambda^{(n)})$ such that $|\mu^{(i)}| + |\lambda^{(i)}|$. To these we can associate
$\sigma (\vec \mu)$ and $\sigma (\vec \mu, \vec \lambda)$, as products of \eqref{eq:6.52}
over the indices $i$.
\begin{itemize}
\item Consider $\sigma (\vec \mu,\vec \lambda)$, where $\vec \lambda$ is nonempty and 
has an even number of terms. Notice that $Z_{W(B_{\vec n})}(\sigma (\vec \mu,\vec \lambda))$ 
contains an element not in $W(D_n)$ which fixes $T^{\sigma (\vec \mu,\vec \lambda)}$ 
pointwise, namely a single factor $\epsilon_{\{a_1\}} (a_1 \cdots a_m)$ of $\vec \lambda$. 
Hence the $W(B_{\vec n})$-conjugacy class of $\sigma (\vec \mu, \vec \lambda)$ is precisely
the $W'_{\vec n}$-conjugacy class of $\sigma (\vec \mu, \vec \lambda)$. Furthermore
\[
T_\un^{\sigma (\vec \mu,\vec \lambda)} / Z_{W'_{\vec n}}(\sigma (\vec \mu,\vec \lambda)) = 
T_\un^{\sigma (\vec \mu,\vec \lambda)} / Z_{W(B_{\vec n})}(\sigma (\vec \mu,\vec \lambda)) ,
\]
and as described in \eqref{eq:6.56}, this is a disjoint union of contractible spaces.
The number of components is given explicitly in terms of $\vec \lambda$.
\item Suppose that $\vec \mu \vdash \vec n$ and that all terms of $\vec \mu$ are even. 
Then the $W(B_{\vec n})$-conjugacy class of $\sigma (\vec \mu)$ splits into two 
$W'_{\vec n}$-conjugacy classes, the other one represented by
\[
\sigma'' (\vec \mu) = \sigma (\vec \mu) \epsilon_{\{n-1,n\}} .
\]
Both $Z_{W(B_{\vec n})}(\sigma (\vec \mu))$ and 
\[ 
Z_{W(B_{\vec n})}(\sigma'' (\vec \mu)) = 
\epsilon_{\{ n \}}^{-1} Z_{W(B_{\vec n})}(\sigma (\vec \mu)) \epsilon_{\{n\}}
\]
are contained in $W'_{\vec n}$. Let $m_l$ be the multiplicity of $l$ in $\vec \mu$.
By \eqref{eq:6.56}
\[
T_\un^{\sigma'' (\vec \mu)} / Z_{W'_{\vec n}}(\sigma'' (\vec \mu)) \cong
T_\un^{\sigma (\vec \mu)} / Z_{W'_{\vec n}}(\sigma (\vec \mu))
\cong \prod\nolimits_{l=1}^n [-1,1]^{m_l} / S_{m_l} ,
\]
which is a contractible space.
\item Let $\mu \vdash n$ be a partition with at least one odd term. Again the 
$W(B_{\vec n})$-conjugacy class of $\sigma (\vec \mu)$ is precisely
the $W'_{\vec n}$-conjugacy class of $\sigma (\vec \mu)$. Now
\[
Z_{W'_{\vec n}}(\sigma (\vec \mu)) \subsetneq Z_{W(B_{\vec n})}(\sigma (\vec \mu)) 
\]
and this really makes a difference. From \eqref{eq:6.54} we deduce 
\begin{equation}\label{eq:6.9}
T_\un^{\sigma (\vec \mu)} / Z_{W'_{\vec n}}(\sigma (\vec \mu)) \cong 
\prod\nolimits_{l=1}^n (S^1 )^{m_l} 
\Big/ \Big( \prod\nolimits_{l=1}^n W(B_{m_l}) \cap W(D_n) \Big) .
\end{equation}
The group $\prod_{l=1}^n W(B_{m_l}) \cap W(D_n)$ equals 
$\big( \prod_{l=1}^n (\Z / 2 \Z)^{m_l} \big)_+ \rtimes \prod_{l=1}^{m_l} S_{m_l}$,
where the subscript + means that the total number of sign changes for odd $l$
must be even. The quotient map
\begin{equation}\label{eq:6.10}
\prod_{l \text{ odd}} (S^1 )^{m_l} \big/ \big( \prod_{l \text{ odd}} 
(\Z / 2 \Z)^{m_l} \big)_+ \longrightarrow \prod_{l \text{ odd}} (S^1 )^{m_l} \big/ 
(\Z / 2 \Z)^{m_l} \cong \prod_{l \text{ odd}} [-1,1]^{m_l} 
\end{equation}
is a two-fold cover which ramifies precisely at the boundary of the unit cube
$\prod_{l \text{ odd}} [-1,1]^{m_l}$. Therefore the left hand side of \eqref{eq:6.10}
is homeomorphic to the unit sphere of dimension $m_1 + m_3 + m_5 + \cdots$
This entails that \eqref{eq:6.9} is homeomorphic to 
\begin{equation}
\prod_{l \text{ even}} \big( [-1,1]^{m_l} / S_{m_l} \big)  \times S^{m_1 + m_3 + \cdots} \big/
\prod_{l \text{ odd}} S_{m_l} .
\end{equation}
This space is contractible unless $m_l = 1$ for all odd $l$, then it is homotopic to
$S^{m_1 + m_3 + \cdots}$.
\end{itemize}

The extended quotient $T_\un /\!/ W'_{\vec n}$ is the disjoint union of the spaces 
$T^w_\un / Z_{W'_{\vec n}}(w)$, as $w$ runs over representatives for the conjugacy classes 
of $W'_{\vec n}$. Since we covered all conjugacy classes for $W(B_{\vec n})$ intersecting
$W'_{\vec n}$, we have a complete description of conjugacy classes for the latter group.
From the above calculations we immediately get the cohomology of the extended quotient.

\begin{lem}\label{lem:6.4}
The abelian group $\check H^* (T_\un /\!/ W'_{\vec n})$ is torsion-free. 

In the case $\vec n = n , W'_{\vec n} = W(D_n)$, we can describe the cohomology of
$T_\un /\!/ W(D_n)$ explicitly. The rank of the odd cohomology is the number of partitions 
$\mu \vdash n$ such that every odd term appears with multiplicity one, and there is 
an odd number of odd terms. 

The rank of the even cohomology of $T_\un /\!/ W(D_n)$ is the sum of four contributions:
\begin{itemize}
\item $\prod_i (k_i + 1)$, for every bipartition $(\mu,\lambda)$ of $n$ with 
$\lambda = (n)^{k_n} \cdots (1)^{k_1}$, such that $\sum_i k_i$ is positive and even;
\item two times the number of partitions of $n$ with only even terms;
\item the number of partitions of $n$ with at least one odd term;
\item the number of partitions of $n$ such that every odd term appears only once,
and the number of odd terms is positive and even.
\end{itemize}
\end{lem}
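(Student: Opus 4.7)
The plan is to read off the result from the case-by-case analysis carried out in the paragraphs immediately preceding the lemma, which decomposes
\[
T_\un /\!/ W'_{\vec n} = \bigsqcup\nolimits_{[w]} T_\un^w / Z_{W'_{\vec n}}(w)
\]
over the conjugacy classes of $W'_{\vec n}$ and identifies each piece up to homotopy equivalence. Three types of pieces arise: a disjoint union of contractible spaces (the classes $\sigma(\vec\mu,\vec\lambda)$ with $\vec\lambda$ nonempty of even total length, and the two classes $\sigma(\vec\mu),\sigma''(\vec\mu)$ when all terms of $\vec\mu$ are even); a single contractible space (when $\vec\mu$ has an odd term appearing with multiplicity greater than one); and a single space homotopy equivalent to a sphere $S^{m_1+m_3+\cdots}$ (when every odd term of $\vec\mu$ appears exactly once, and there is at least one such term). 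Spheres and contractible spaces have torsion-free integral \v Cech cohomology, and cohomology sends disjoint unions to direct sums, so the first assertion of the lemma drops out.

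For the explicit rank count in the case $\vec n = n$, I would tabulate the contributions of each conjugacy class to $\check H^*(T_\un /\!/ W(D_n);\Z)$. A contractible component contributes one $\Z$ in degree $0$; a spherical component $S^k$ contributes one $\Z$ in degree $0$ and one $\Z$ in degree $k$. Running through the three cases above: bipartitions $(\mu,\lambda)$ with $\lambda$ having a positive even number of parts yield $\prod_i (k_i+1)$ contractible components (directly from \eqref{eq:6.56}); partitions $\mu$ with only even terms yield two contractible components (from $\sigma(\mu)$ and $\sigma''(\mu)$); and partitions $\mu$ with at least one odd term yield one component, contractible or spherical. The spherical contribution sits in degree $m_1+m_3+\cdots$, which equals the number of odd terms of $\mu$ (since each odd $m_l$ is one), and is therefore odd or even according to the parity of that count. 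Sorting these four sources of $\Z$'s by the parity of their cohomological degree exactly produces the four terms claimed for the even rank and the single term for the odd rank.

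The one step that requires genuine verification, rather than pure bookkeeping, is the homotopy identification in the third case: that the two-fold branched cover of $[-1,1]^{m_1+m_3+\cdots}$ obtained from $(S^1)^{m_1+m_3+\cdots}/(\prod_{l\text{ odd}}(\Z/2\Z)^{m_l})_+$ is genuinely a sphere of the claimed dimension, and that the further quotient by $\prod_{l\text{ odd}} S_{m_l}$ collapses it to a point whenever some odd $m_l$ exceeds one. Once that picture, already sketched via the ramification argument around \eqref{eq:6.10}, is granted, everything else in the proof is combinatorial. All remaining ingredients, including the even-terms analogue $\prod_l [-1,1]^{m_l}/S_{m_l}$ being contractible and the disjointness of $\sigma(\vec\mu)$ from $\sigma''(\vec\mu)$ as $W'_{\vec n}$-classes, have been recorded above the lemma.
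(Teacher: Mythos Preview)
Your proposal is correct and follows exactly the paper's approach: the paper gives no separate proof of this lemma, stating only that ``from the above calculations we immediately get the cohomology of the extended quotient,'' and your writeup is precisely the bookkeeping that spells this out. Your identification of the three types of pieces, the torsion-freeness from spheres and contractible spaces, and the four-term rank count all match the preceding case analysis faithfully.
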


Every point of $T \cong (\C^\times )^n$ is $W(B_{\vec n})$-conjugate to one of the form
\[
t = (t^{(1)},\ldots,t^{(d)}) ,\quad
t^{(i)} = \big( (t_1)^{\mu^{(i)}_1} \cdots (t_{n_i- m^{(i)}_1 - m^{(i)}_2} )^{\mu_{d_i}} 
(1)^{m^{(i)}_1} (-1)^{m^{(i)}_2} \big) \in (\C^\times )^{n_i} .
\]
The isotropy group of $t$ in $W'_{\vec n}$ is 
\begin{multline}\label{eq:6.12}
(W'_{\vec n} )_t = \Big( \prod\nolimits_{i=1}^d S_{\mu_1^{(i)}} \times \cdots \times 
S_{\mu_{d_i}^{(i)}} \times W (B_{m^{(i)}_1}) \times W (B_{m^{(i)}_2}) \Big) \cap W(D_n) = \\
\Big( \prod\nolimits_{i=1}^d S_{\mu_1^{(i)}} \times \cdots \times 
S_{\mu_{d_i}^{(i)}} \Big) \times \Big( \prod\nolimits_{i=1}^d W (B_{m^{(i)}_1}) \times 
W (B_{m^{(i)}_2}) \Big) \cap W (D_{m^{(1)}_1 + \cdots + m^{(d)}_2}) . 
\end{multline} 
We note that $(W'_{\vec n})_t$ is generated by the reflections it contains if
$t$ has no coordinates 1 or $-1$. Otherwise the reflection subgroup of $W(D_n)_t$ is
\[
(W'_{\vec n})_t^\circ := \prod\nolimits_{i=1}^d S_{\mu_1^{(i)}} \times \cdots \times 
S_{\mu_{d_i}^{(i)}} \times W (D_{m^{(i)}_1}) \times W (D_{m^{(i)}_2}) ,
\]
where $W(D_0) = W(D_1) = 1$. In that case
\begin{equation}\label{eq:6.13}
(W'_{\vec n})_t = \Big( \prod\nolimits_{i=1}^d S_{\mu_1^{(i)}} \times \cdots \times 
S_{\mu_{d_i}^{(i)}} \Big) \times W'_{\vec m} ,
\end{equation}
where $\vec m$ consists of those terms $m^{(i)}_1, m^{(i)}_2$ which are nonzero.
The group $W'_{\vec m}$ is a particular instance of the almost Weyl groups studied in 
Appendix \ref{par:almost}. Thus $(W'_{\vec n})_t$ is an example of the groups 
considered in Lemma \ref{lem:1.11}, and we may use that result.

\begin{prop}\label{prop:6.6}
For any positive parameter function $q$, $K_* (C_r^* (\mc R'_{\vec n},q))$ is a
free abelian group, isomorphic to $H^* (T_\un /\!/ W'_{\vec n} ;\Z)$.

In particular for $\vec n = n, \mc R'_{\vec n} = \mc R (SO_{2n}), W'_{\vec n} =
W (D_n)$, the free abelian group 
\[
K_* \big( C_r^* (\mc R (SO_{2n}),q) \big) \cong H^* (T_\un /\!/ W(D_n);\Z)
\]
has even and odd ranks as given in Lemma \ref{lem:6.4}.
\end{prop}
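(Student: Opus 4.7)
The plan is to reduce to the case $q=1$ via Theorem \ref{thm:2.2}, and then to adapt the argument of Theorem \ref{thm:2.3} to the present situation, where the pointwise isotropy groups of $W'_{\vec n}$ on $T_{\un}$ are not Weyl groups but the mildly more general ``almost Weyl groups'' of the appendix.

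First, Theorem \ref{thm:2.2} (applied to the product root datum $\mc R'_{\vec n}$, with trivial $\Gamma$) gives a canonical isomorphism
\[
K_* \big( C_r^* (\mc R'_{\vec n},q) \big) \;\cong\; K_* \big( C_r^* (\mc R'_{\vec n},1) \big)
\;=\; K_*\big( C(T_\un) \rtimes W'_{\vec n} \big),
\]
so it suffices to compute the $W'_{\vec n}$-equivariant K-theory of $T_{\un}$. I would then run the spectral sequence of Theorem \ref{thm:2.15} for $\Sigma = T_{\un}$ equipped with a smooth $W'_{\vec n}$-CW structure (using \cite{Ill}) and the coefficient system $\mf{L}_u$ attached to the crossed product $C(T_{\un}) \rtimes W'_{\vec n}$ as in \eqref{eq:2.14}. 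The spectral sequence degenerates at $E_2$ and identifies the associated graded of $K_*(C(T_{\un}) \rtimes W'_{\vec n})$ with $\check H^*(T_{\un}/W'_{\vec n}; \mf{L}_u^{W'_{\vec n}})$.

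The core step is then to identify this sheaf cohomology with $H^*(T_{\un} /\!/ W'_{\vec n}; \Z)$, exactly as in the proof of Theorem \ref{thm:2.3}(a). There the isotropy groups were assumed to be Weyl groups and the basis of $R_\Z((W'_{\vec n})_t)$ produced by Proposition \ref{prop:1.8} was used to split $C_*(T_{\un}/W'_{\vec n}; \mf{L}_u^{W'_{\vec n}})$ into one subcomplex for each class in $cc(W'_{\vec n})$, each isomorphic to the cellular chain complex of $\Sigma^g / Z_{W'_{\vec n}}(g)$. The obstacle here is precisely that by \eqref{eq:6.13} the isotropy $(W'_{\vec n})_t$ has a factor $W'_{\vec m}$ which is only an almost Weyl group. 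However, the remark just before the proposition points to Lemma 1.11, the appropriate generalization of Proposition \ref{prop:1.8} to almost Weyl groups; I would invoke it to produce, for each conjugacy class $[g] \in cc(W'_{\vec n})$ and each cell $\tau \subset T_{\un}^g$, a distinguished virtual representation $s(g,\tau) \in R_\Z((W'_{\vec n})_\tau)$ such that $\{s(g,\tau)\}_{g}$ is a $\Z$-basis of $R_\Z((W'_{\vec n})_t)$ for $t \in \tau \setminus \partial \tau$, and such that induction from smaller Levi-type isotropies is compatible with boundary maps. This exactly reproduces the decomposition argument of Theorem \ref{thm:2.3}(a) and yields
\[
H_i\big( C_*(T_{\un}/W'_{\vec n}; \mf{L}_u^{W'_{\vec n}}), \partial \big) \;\cong\; H_i(T_{\un} /\!/ W'_{\vec n}; \Z).
\]

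Finally, by Lemma \ref{lem:6.4} the group $H^*(T_{\un}/\!/ W'_{\vec n}; \Z)$ is torsion-free. As in the proof of Theorem \ref{thm:2.3}(b), torsion-freeness of the right-hand side of the Universal Coefficient diagram \eqref{eq:2.22} forces every $E_\infty^p$ in the spectral sequence of Theorem \ref{thm:2.15} to be torsion-free, so the filtration of $K_*(C(T_{\un}) \rtimes W'_{\vec n})$ splits and
\[
K_*\big(C(T_{\un}) \rtimes W'_{\vec n}\big) \;\cong\; H^*\big(T_{\un}/\!/ W'_{\vec n};\Z\big).
\]
Combined with the first step this gives the proposition; the rank computation for $\mc R(SO_{2n})$ is then just a transcription of Lemma \ref{lem:6.4}. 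The main obstacle is thus the Lemma \ref{lem:1.11} generalization of the Springer-type basis to almost Weyl groups; once that is in hand, the Chern-character-style argument of Theorem \ref{thm:2.3} goes through mutatis mutandis.
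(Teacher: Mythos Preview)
Your proposal is correct and follows essentially the same route as the paper: reduce to $q=1$ via parameter independence, then rerun the proof of Theorem~\ref{thm:2.3} with Lemma~\ref{lem:1.11} replacing Proposition~\ref{prop:1.8} so as to handle the almost-Weyl isotropy groups $(W'_{\vec n})_t$, and conclude using the torsion-freeness from Lemma~\ref{lem:6.4}. The paper is slightly more explicit in one respect you glossed over: it writes down the sections $s(g,\tau)$ separately for the ordinary classes $\sigma(\vec\mu,\vec\lambda_1,\vec\lambda_2)$ and for the ``split'' classes $\sigma''(\vec\mu)$ (those $\vec\mu\vdash\vec n$ with only even parts), checking by hand that each $W'_{\vec n}$-conjugacy class is hit exactly once; your proposal implicitly assumes this bookkeeping, but it is straightforward once one has the conjugacy-class description preceding Lemma~\ref{lem:6.4}.
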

\begin{proof}
By Theorem \ref{thm:1.5} it suffices to prove this when $q=1$. 

We adapt the notations from \eqref{eq:6.56} to the present setting. Let 
$(\vec \mu,\vec \lambda_1,\vec \lambda_2)$ be a $d$-tuple of tripartitions,
of respectively $n_1,\ldots, n_d$, and such that $\vec \lambda_1 \cup \vec \lambda_2$ 
has an even number of terms. As in \eqref{eq:6.13} we write
\[
\begin{split}
W_{\vec \mu,\vec \lambda_1,\vec \lambda_2} := \Big( \prod\nolimits_{i=1}^d S_{\mu_1^{(i)}} 
\times \cdots \times S_{\mu_{d_i}^{(i)}} \times
W (B_{|\lambda^{(i)}_1|}) \times W (B_{|\lambda^{(i)}_2|}) \big) \cap W(D_n) = \\
\Big( \prod\nolimits_{i=1}^d S_{\mu_1^{(i)}} \times \cdots \times S_{\mu_{d_i}^{(i)}} \Big)
\times W'_{\vec m} ,
\end{split} 
\]
where $\vec m$ consists of the nonzero terms among the $|\lambda^{(i)}_1|, |\lambda^{(i)}_2|$.
The group $W_{\vec \mu,\vec \lambda_1,\vec \lambda_2}$ is the full stabilizer of some point
of $T_\un$, and of the form considered in Lemma \ref{lem:1.11}. 
We note that $\sigma (\vec \mu,\vec \lambda_1,\vec \lambda_2)$ is an elliptic element of
$W_{\vec \mu,\vec \lambda_1,\vec \lambda_2}$.

For every $t \in T_{\un,c}^{\sigma (\mu,\lambda_1,\lambda_2)}$ we have $(W'_{\vec n} )_t 
\supset W_{\vec \mu,\vec \lambda_1,\vec \lambda_2}$. Using Lemma \ref{lem:1.11} we define
\begin{equation}\label{eq:6.18}
s( \sigma (\vec \mu,\vec \lambda_1,\vec \lambda_2), t) = \ind^{(W'_{\vec n})_t}_{
W_{\vec \mu,\vec \lambda_1,\vec \lambda_2}} H(u_{\sigma (\vec \mu,\vec \lambda_1,\vec \lambda_2)},
\rho_{\sigma (\vec \mu,\vec \lambda_1,\vec \lambda_2)}) .
\end{equation}
Suppose that $\vec \mu \vdash \vec n$ and that $\vec \mu$ has only even terms. 
Then $\sigma'' (\vec \mu) = \epsilon_{\{n-1,n\}} \sigma (\vec \mu)$ is conjugate to 
$\sigma (\vec \mu)$ in $W(B_{\vec n})$ but not in $W'_{\vec n}$. The element 
$\sigma'' (\vec \mu)$ is elliptic in $\epsilon_{\{n\}} \big( \prod_{i=1}^d S_{\mu^{(i)}_1} 
\times \cdots \times S_{\mu^{(i)}_{d_i}} \big) \epsilon_{\{n\}}$
and for every $t \in T^{\sigma'' (\vec \mu)}$ we have
\[
(W'_{\vec n})_t \supset \epsilon_{\{n\}} \big( \prod\nolimits_{i=1}^d S_{\mu^{(i)}_1} 
\times \cdots \times S_{\mu^{(i)}_{d_i}} \big) \epsilon_{\{n\}} .
\]
For such $t$ we define
\begin{equation}\label{eq:6.19}
s (\sigma'' (\vec \mu),t) = \ind^{(W'_{\vec n} )_t}_{\epsilon_{\{n\}} \big( 
\prod\nolimits_{i=1}^d S_{\mu^{(i)}_1} \times \cdots \times S_{\mu^{(i)}_{d_i}} \big) 
\epsilon_{\{n\}}} H(u_{\sigma'' (\vec \mu)}, \rho_{\sigma'' (\vec \mu)}) .
\end{equation}
As discussed before Lemma \ref{lem:6.4}, every conjugacy class of $W'_{\vec n}$ 
appears precisely once in \eqref{eq:6.18} and \eqref{eq:6.19} together. 

With this information and Lemma \ref{lem:1.11} available, the same argument as in the proof of
Theorem \ref{thm:2.3}.a works in the present setting, and shows that the conclusion of
Theorem \ref{thm:2.3}.a is fulfilled. Then we apply Theorem \ref{thm:2.3}.b.
\end{proof}

\subsection{Type $G_2$} \

As basis for the root lattice $X$ of type $G_2$ we will take the two simple roots.
We will coordinatize the dual lattice $Y$ so that the pairing between $X$ and $Y$
becomes the standard pairing on $\Z^2$. Explicitly, $\mc R (G_2 )$ becomes:
\[
\begin{aligned}
& X = Q = \Z^2, \qquad Y = Q^\vee = \Z^2 \\
& T = (\mh C^\times )^2 \quad  t = (t(e_1),t(e_2)) = (t_1,t_2) \\
& R^+ = \{ e_1 ,e_2 , e_1 + e_2, 2 e_1 + e_2, 3 e_1 + e_2, 3 e_1 + 2 e_2 \} ,
\quad R = R^+ \cup -R^+ \\
& R^{\vee,+} = \{ 2e_1 - 3 e_2, 2e_1 - e_2, 3 e_2 - e_1, e_1, e_1 - e_2, e_2 \} ,
\quad R^\vee = R^{\vee,+} \cup -R^{\vee,+} \\ 
& \Delta = \{ e_1, e_2 \} ,\quad \alpha_0^\vee = e_1 ,\quad \alpha_0 = 2 e_1 + e_2 \\
& s_1 = s_{e_1}, \quad s_2 = s_{e_2} \quad s_0 = t_{\alpha_0} s_{\alpha_0} = 
  t_{e_1} s_{\alpha_0} t_{-e_1} : x \to x + \alpha_0 - 
  \inp{\alpha_0^\vee}{x} \alpha_0 \\
& W = \langle s_1 , s_2 | s_1^2 = s_2^2 = (s_1 s_2)^6 = e \rangle \cong D_6 \\
& S^\af = \{ s_0 ,s_1 ,s_2 \} ,\quad \Omega = \{e\} \\
& W^e = W^\af = \langle s_0, W_0 | s_0^2 = (s_0 s_2)^2 = (s_0 s_1)^3 = e \rangle 
\end{aligned}
\]
A generic parameter function $q$ for $\mc R (G_2)$ has two independent parameters
$q_1 = q (s_1)$ and $q_2 = q(s_2)$.

The group $W \cong D_6$ has six conjugacy classes: the identity, reflections 
associated to short roots, reflections associated to long roots, the rotation of
order two, rotations of order three and rotations of order six. Representatives
are $e,s_1,s_2,\rho_\pi = (s_1 s_2)^3, \rho_{2 \pi /3} = (s_1 s_2 )^2$ and 
$\rho_{\pi / 6} = s_1 s_2$. We determine the connected components of the 
extended quotient $T_\un /\!/ W$:
\[
\begin{array}{cccl}
w & T^w & Z_{D_6}(w) & T_\un^w / Z_{D_6}(w) \\
\hline 
e & T & D_6 & (S^1)^2 / D_6 \cong \text{ solid triangle} \vphantom{Q^{Q^Q}} \\
s_1 & \{ (1,t_2) : t_2 \in \C^\times \} & \langle s_1 , s_{3e_1 + 2e_2} \rangle &
S^1 / \langle s_{3 e_1 + 2 e_2} \rangle \cong [-1,1] \\
s_2 & \{ (t_1,1) : t_1 \in \C^\times \} & \langle s_2 ,s_{2 e_1 + e_2} \rangle &
S^1 / \langle s_{2 e_1 + e_2} \rangle \cong [-1,1] \\
\rho_\pi & \{ (a,b) : a,b \in \{\pm 1\} \} & D_6 & 2 \text{ points} \\
\rho_{2 \pi /3} & \{ (1,1), (\zeta_3,1), (\zeta_3^2,1) \} & C_6 = \langle \rho_{\pi / 3} \rangle &
2 \text{ points} \\
\rho_{\pi / 3} & \{ (1,1)\} & C_6 = \langle \rho_{\pi / 3} \rangle & 1 \text{ point}
\end{array} 
\]
Here $\zeta_3$ is a primitive third root of unity.
We see that every connected component of $T_\un /\!/ W$ is contractible, and
that its cohomology is zero in positive degrees and $\Z^8$ in degree zero.

The root datum $\mc R (G_2)$ is simply connected, so $W_t$ is a Weyl group for every
$t \in T$. This can also be checked directly: for $t \in T$ with $W_t = \{e\}$ or
$W_t$ generated by one reflection it is true. For all $t \in T$ not of that form,
$W_t$ contains a nontrivial rotation. All rotations (or their inverses) appear in the
above table, along with their fixpoints. We list the isotropy groups of those points:
\begin{align*}
& W_{(1,1)} = D_6 , \\
& W_{(\zeta_3,1)} = W_{(\zeta_3^2,1)} = \langle s_2, \rho_{2\pi/3} \rangle \cong S_3 ,\\
& W_{(-1,-1)} \cong W_{(-1,1)} \cong W_{(1,-1)} = \langle s_1 , s_{3e_1 + 2e_2} \rangle
\cong S_2 \times S_2 .
\end{align*}
We have checked all the conditions of Theorem \ref{thm:2.3}. 
By Corollary \ref{cor:2.5}, for every positive parameter function $q$:
\begin{equation}
K_0 (C_r^* (\mc R (G_2),q)) \cong \Z^8 ,\quad K_1 (C_r^* (\mc R(G_2),q)) = 0 .
\end{equation}

\appendix

\section{Some almost Weyl groups} 
\label{par:almost}

We study some finite groups which are almost Weyl groups. Such groups can arise as 
the component groups of unipotent elements of classical complex groups, and they 
play a role in the affine Hecke algebras associated to general Bernstein components
for classical $p$-adic groups \cite{Gol,Hei}.
The results from this appendix are only needed in Paragraph \ref{par:SOeven}.

Fix $n_1,n_2 ,\ldots, n_d \in \Z_{\geq 1}$ with $n_1 + \cdots + n_d = n$ and consider
\[
W'_{\vec n} := \big( W(B_{n_1}) \times \cdots \times W(B_{n_d}) \big) \cap W(D_n) .
\]
We use the convention that
$W(D_1)$ is the trivial group. The group $W'_{\vec n}$ acts on the root system
\[
D_{\vec n} := D_{n_1} \times \cdots \times D_{n_d} .  
\]
Let $\Delta_{\vec n}$ be the standard basis of $D_{\vec n}$ and let $\Gamma$ be the
stabilizer of $\Delta_{\vec n}$ in $W'_{\vec n}$. Since $W(D_{\vec n})$ acts
simply transitively on the collection of bases of $D_{\vec n}$, 
\begin{equation}\label{eq:1.43}
W'_{\vec n} = W(D_{\vec n}) \rtimes \Gamma . 
\end{equation}
Explicitly, the group $\Gamma \cong (\Z / 2\Z)^{d-1}$ is generated by the elements
$\epsilon^{(k)} \epsilon^{(k+1)}$ for $k=1,\ldots,d-1$, where $\epsilon^{(k)} = s_{e_{n_k}}$
is the reflection associated to the short simple root of $B_{n_k}$.

The Springer correspondence was extended to groups of this kind in
\cite{Kat,ABPS1}. Let $T$ be the diagonal torus of the connected complex group
\begin{equation}\label{eq:1.44}
G^\circ = SO_{2 n_1}(\C) \times \cdots \times SO_{2 n_d}(\C) . 
\end{equation}
Then $W'_{\vec n}$ acts naturally on $T$ and we recover $W(D_{\vec n})$ as the Weyl 
group of $(G^\circ,T)$. The Lie algebra of $T$ can be identified with the 
defining representation of 
\begin{equation}
W (B_{\vec n}) := W(B_{n_1}) \times \cdots \times W(B_{n_d}) .
\end{equation}
Since $\Gamma$ consists of diagram automorphisms of
$D_{\vec n}$, we can build the reductive group
\begin{equation}\label{eq:1.41}
G = G^\circ \rtimes \Gamma. 
\end{equation}
Then $W'_{\vec n}$ becomes the ``Weyl" group of this disconnected group:
\[
W'_{\vec n} = W(G,T) := N_G (T) / T . 
\]
For $u \in G^\circ$ unipotent let $\mc B^u = \mc B_{G^\circ}^u$ be the variety of Borel
subgroups of $G^\circ$ containing $u$. The group $Z_G (u)$ acts naturally on
$\mc B^u \times \Gamma$, and that induces an action of $A_G (u) = \pi_0 (Z_G (u) / Z(G))$
on $H^i (\mc B^u ;\C) \otimes \C [\Gamma]$. For $\rho' \in \Irr (A_G (u))$ we form
the $W'_{\vec n}$-representations
\begin{align*}
& H (u,\rho') = H_{A_G (u)} \big(\rho, H^* (\mc B^u ;\C) \otimes \C [\Gamma] \big) , \\
& \pi (u,\rho') = H_{A_G (u)} \big(\rho, H^{\mr{top}} (\mc B^u ;\C) \otimes \C [\Gamma] \big) .
\end{align*}
We call $\rho'$ geometric if $\pi (u,\rho) \neq 0$. Then \cite[Theorem 4.4]{ABPS1} says
that $\pi (u,\rho') \in \Irr (W'_{\vec n})$ and that this yields a bijection between 
$\Irr (W'_{\vec n})$ and the $G$-conjugacy classes of pairs $(u,\rho')$ with $u \in G^\circ$ 
unipotent and $\rho' \in \Irr (A_G (u))$ geometric.

The $W'_{\vec n}$-representations $H' (u,\rho')$, with $(u,\rho')$ as above, form another $\Z$-basis
of $R_\Z (W'_{\vec n})$. Indeed, this can be shown in the same way as for Weyl groups in 
\cite[Lemma 3.3.1]{Ree}, the input from \cite{BoMa} holds for $W'$ by \cite[Lemma 4.5]{ABPS1}.

For $P \subset \Delta_{\vec n}$ we define the standard parabolic subgroup
\[
W'_P := \langle s_\alpha : \alpha \in P \rangle \rtimes \mr{Stab}_\Gamma (P) . 
\]
As usual, a parabolic subgroup of $W'_{\vec n}$ is a conjugate of some $W'_P$.
Let $P_A$ be the standard basis of the union of the type $A$ root subsystems of $R_P$ and let
$P_B$ be the standard basis of the union of the type $B$ root subsystems of $\Q R_P \cap B_{\vec n}$.
(So $P_B$ need not be contained in $P$.) It is easily seen that
\begin{equation}\label{eq:1.42}
W'_P = W_{P_A} \times W_{P_B} \cap W(D_n ) = W_{P_A} \times W'_{{\vec n}_P} , 
\end{equation}
where ${\vec n}_P$ consists of the numbers $|P_B \cap B_{n_i}|$ which are nonzero.

All the above notions for $W'_{\vec n}$ have natural analogues for $W'_P$, which we indicate
by an additional subscript $P$. In particular \cite[Proposition 6.2]{Kat} entails that, 
as in \eqref{eq:1.35} and \eqref{eq:1.25}:
\[
\ind_{W'_P}^{W'_{\vec n}} \big( H_{W'_P} (u_P,\rho'_P ) \big) \cong
\Hom_{A_{G_P}(u_P)} \big( \rho_P , H^* (\mc B^{u_P} ;\C) \otimes \C [\Gamma] \big) .
\]

\begin{lem}\label{lem:1.9}
The parabolic subgroups of $W'_{\vec n}$ are precisely the isotropy groups of the
points of Lie$(T)$.
\end{lem}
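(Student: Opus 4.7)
The plan is to match the two descriptions by direct computation, using the explicit shape of standard parabolics provided by \eqref{eq:1.42} and the shape of isotropy groups which can be read off exactly as in \eqref{eq:6.13} in Paragraph \ref{par:SOeven}. I would fix the standard realization of $W(B_{\vec n})$ acting on $\mathrm{Lie}(T) = \bigoplus_{i=1}^d \R^{n_i}$ by signed permutations within each block, with $\Gamma$ generated by $\epsilon^{(k)}\epsilon^{(k+1)}$ negating the last coordinates of the $k$-th and $(k+1)$-th blocks.

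First I would compute $(W'_{\vec n})_\lambda$ for an arbitrary $\lambda = (\lambda^{(1)},\dots,\lambda^{(d)})$. Since $W(B_{\vec n})_\lambda = \prod_i W(B_{n_i})_{\lambda^{(i)}}$, and each block factor is a product of symmetric groups $S_{\mu^{(i)}_j}$ (permuting coordinates sharing a common nonzero value up to sign) together with a $W(B_{m^{(i)}})$ acting on the $m^{(i)}$ coordinates equal to zero, intersecting with $W(D_n)$ affects only the sign-change parts (the symmetric-group factors lie in $W(D_n)$ automatically). This gives
\[
(W'_{\vec n})_\lambda \;\cong\; \Big(\prod_{i,j} S_{\mu_j^{(i)}}\Big) \times W'_{\vec m},
\]
with $\vec m$ the list of nonzero $m^{(i)}$, in perfect parallel with \eqref{eq:6.13}. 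On the other hand, \eqref{eq:1.42} says a standard parabolic has exactly this shape: $W'_P = W_{P_A} \times W'_{{\vec n}_P}$.

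For the inclusion \emph{isotropy $\subseteq$ parabolic}, I would use the $W'_{\vec n}$-action on $\mathrm{Lie}(T)$ to put $\lambda$ into a canonical form: within each $\lambda^{(i)}$, use the $S_{n_i}$-part to place equal absolute values in consecutive positions (with the absolute values weakly decreasing), use the available even sign changes to make all the nonzero values positive, and place the zero coordinates as the final block of $\lambda^{(i)}$. Then equal coordinates are separated exactly by simple roots $e_j - e_{j+1} \in \Delta_{n_i}$, and the trailing zeros make $\lambda^{(i)}$ vanish on the "type-$D$ tail" generated by $\{e_{n_i - m^{(i)}} - e_{n_i - m^{(i)} + 1}, \dots, e_{n_i - 1} - e_{n_i}, e_{n_i - 1} + e_{n_i}\}$. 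Taking $P$ to be the union of these simple roots across all $i$ gives a standard $P \subset \Delta_{\vec n}$ for which the right-hand side of \eqref{eq:1.42} matches the above description of $(W'_{\vec n})_\lambda$ term-by-term; the $W'_{\vec n_P}$-factor is exactly the $W'_{\vec m}$-factor because $\Gamma \cap (W'_{\vec n})_\lambda$ is generated by those $\epsilon^{(k)}\epsilon^{(k+1)}$ whose two blocks both have nonzero $m$, which is exactly when $P$ contains the corresponding two type-$D$ tails.

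For the reverse inclusion, given any $P \subset \Delta_{\vec n}$ I would construct $\lambda$ realizing $W'_P$ as its isotropy: decompose $P \cap \Delta_{n_i}$ into connected sub-diagrams, each of which is either a type-$A$ chain $\{e_a - e_{a+1}, \dots, e_{a+r-1} - e_{a+r}\}$ (assign all involved coordinates a common nonzero value chosen distinct from those of other chains) or the type-$D$ tail of the $i$-th block (assign zero to those coordinates). Inserting generic distinct values in the remaining coordinates (and making sure the chosen nonzero values are not permuted among blocks by any $\epsilon^{(k)}\epsilon^{(k+1)}$), the stabilizer of $\lambda$ in $W(B_{\vec n})$ is exactly $W_{P_A} \times \prod_i W(B_{m^{(i)}})$ with $m^{(i)}$ the length of the type-$D$ tail in block $i$; intersecting with $W(D_n)$ yields $W_{P_A} \times W'_{\vec n_P}$, which is $W'_P$ by \eqref{eq:1.42}. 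For non-standard parabolics one then conjugates both $P$ and $\lambda$ by a common element of $W'_{\vec n}$.

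The one subtlety to watch, and the step I expect to require the most care, is the bookkeeping of $\Gamma$: one must check that $\epsilon^{(k)}\epsilon^{(k+1)}$ fixes $\lambda$ precisely when the diagram automorphism it induces fixes $P \cap (\Delta_{n_k} \cup \Delta_{n_{k+1}})$, so that the $\Gamma$-components of $(W'_{\vec n})_\lambda$ and $W'_P$ agree. This reduces to the observation that $\epsilon^{(k)}$ fixes $\lambda^{(k)}$ iff its last coordinate vanishes, iff $P \cap \Delta_{n_k}$ contains the full type-$D$ tail of $\Delta_{n_k}$, iff the diagram automorphism of $D_{n_k}$ stabilizes $P \cap \Delta_{n_k}$.
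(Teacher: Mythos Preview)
Your approach differs from the paper's. The paper works through the ambient Weyl group $W(B_{\vec n})$: since $W'_{\vec n}$ is normal there, $(W'_{\vec n})_y = (W(B_{\vec n}))_y \cap W'_{\vec n}$; one conjugates $y$ by an element of $W(B_{\vec n})$ so that $(W(B_{\vec n}))_y$ becomes a \emph{standard} parabolic $W(B_{\vec n})_Q$, identifies $W(B_{\vec n})_Q \cap W'_{\vec n}$ with a standard $W'_P$ via \eqref{eq:1.42}, and then observes that $W(B_{\vec n})$-conjugacy of subgroups of $W'_{\vec n}$ descends to $W'_{\vec n}$-conjugacy because each $\epsilon^{(k)}$ permutes the parabolics of $W'_{\vec n}$. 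Only the direction ``isotropy $\Rightarrow$ parabolic'' is written out. This is shorter than your direct matching, and the passage through $W(B_{\vec n})$ also absorbs the sign-parity issue that your normalisation step (``make all nonzero values positive by even sign changes'') cannot always resolve inside $W'_{\vec n}$ when the total number of negative coordinates is odd.

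There is a genuine gap in your $\Gamma$-bookkeeping. The last equivalence in your chain is false: the diagram automorphism $\epsilon^{(k)}$ of $D_{n_k}$ stabilises $P \cap \Delta_{n_k}$ not only when $P$ contains both fork roots $e_{n_k-1} \pm e_{n_k}$, but also when $P$ contains \emph{neither} of them. So if, after your normalisation, the last coordinate of $\lambda^{(k)}$ is nonzero and distinct from its neighbour, then $\epsilon^{(k)}$ does not fix $\lambda^{(k)}$, yet the $P$ you read off omits both fork roots in block $k$ and $\epsilon^{(k)}$ \emph{does} stabilise $P \cap \Delta_{n_k}$. Hence $\mathrm{Stab}_\Gamma(P)$ can strictly exceed the $\Gamma$-component of $(W'_{\vec n})_\lambda$, and your claimed equality $W'_P = (W'_{\vec n})_\lambda$ breaks down. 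Concretely, for $d \geq 2$ and $\lambda$ fully generic your construction gives $P = \emptyset$, so $W'_\emptyset = \Gamma$ by the definition $W'_P = \langle s_\alpha : \alpha \in P\rangle \rtimes \mathrm{Stab}_\Gamma(P)$, whereas $(W'_{\vec n})_\lambda = 1$.
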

\begin{proof}
Considering the standard representation of $W(B_{\vec n})$ on Lie$(T)$, we see that for 
any $y \in \mr{Lie}(T)$ the isotropy group $(W'_{\vec n} )_y$ is $W(B_{\vec n})$-conjugate
to $W(B_{\vec n})_Q \cap W(D_{\vec n})$, where $W(B_{\vec n})_Q$ is a standard parabolic
subgroup of $W(B_{\vec n})$. From \eqref{eq:1.42} we see that the group 
$W(B_{\vec n})_Q \cap W(D_{\vec n})$ equals $W'_P$ for $R_P = R_Q \cap D_{\vec n}$. Hence
every isotropy group $(W'_{\vec n} )_y$ is $W(B_{\vec n})$-conjugate to some standard
parabolic subgroup of $W'_{\vec n}$. Since the diagram automorphisms $\epsilon^{(k)}$ stabilize
the collection of parabolic subgroups of $W'_{\vec n}$ and $W(B_{\vec n})$ is generated by
$W(D_{\vec n})$ and the $\epsilon^{(k)}$, we conclude that $(W'_{\vec n} )_y$ is 
$W'_{\vec n}$-conjugate to a parabolic subgroup of $W'_{\vec n}$.
\end{proof}

With Lemma \ref{lem:1.9} we can define ellipticity in two equivalent ways. An element of
$W'_{\vec n}$ is elliptic if it is not contained in a proper parabolic subgroup, or
equivalently if it fixes a nonzero element of Lie$(T)$.
With these notions we can develop the elliptic representation theory of $W'_{\vec n}$, 
exactly as in \cite{Ree} and as in Paragraph \ref{par:Weyl}. In particular 
\eqref{eq:1.30} remains valid.

\begin{lem}\label{lem:1.10}
The group of elliptic representations $\overline{R_\Z}(W'_{\vec n})$ is torsion-free.
\end{lem}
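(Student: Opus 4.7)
The plan is to reproduce the argument of Theorem \ref{thm:1.6} verbatim in the setting of the disconnected group $G = G^\circ \rtimes \Gamma$, relying on the extended Springer correspondence from \cite{Kat,ABPS1} that was recalled in the paragraphs preceding the statement.

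First I would set up the elliptic theory. For a unipotent class $\mc C \subset G^\circ$ define $R_\Z(W'_{\vec n}, \mc C)$ to be the subgroup of $R_\Z(W'_{\vec n})$ spanned by the $H(u,\rho')$ with $u \in \mc C$ and $\rho' \in \Irr(A_G(u))$. For parabolic subgroups use the product decomposition \eqref{eq:1.42}, $W'_P = W_{P_A} \times W'_{\vec n_P}$, together with the standard Levi $G_P = G_{P_A} \times G_{\vec n_P}$, and the analogous groups $R_\Z(W'_P, \mc C)$. By \cite[Proposition 6.2]{Kat} the formula \eqref{eq:1.35} holds for the disconnected group, so the natural isomorphism \eqref{eq:1.25} persists with $H^*(\mc B^{u_P};\C)\otimes \C[\Gamma]$ in place of $H^*(\mc B^{u_P};\C)$. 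Consequently the decomposition \eqref{eq:1.26} holds, and the argument of \cite[Proposition 3.4.1]{Ree} carries over to produce an injection
\[
\overline{R_\Z}(W'_{\vec n}) \hookrightarrow \bigoplus_u \overline{R_\Z}(A_G(u)),
\]
with $u$ running over representatives of unipotent classes of $G^\circ$. Hence it suffices to show that each $\overline{R_\Z}(A_G(u))$ is torsion-free.

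Second, I would analyse $A_G(u)$ itself. The restriction $A_{G^\circ}(u) = \prod_{i=1}^d A_{SO_{2n_i}(\C)}(u_i)$ is a product of elementary abelian $2$-groups by \cite[\S 13.1]{Car2}. The full group sits in
\[
1 \to A_{G^\circ}(u) \to A_G(u) \to \Gamma_u \to 1,
\]
where $\Gamma_u \subset \Gamma \cong (\Z/2\Z)^{d-1}$ is the stabilizer of the $G^\circ$-conjugacy class of $u$; this is abelian because the diagram automorphisms act on $A_{G^\circ}(u)$ through the factor-permuting part of $\Gamma$ (which is trivial since each $\Gamma$-generator acts only by an outer automorphism on a single $SO_{2n_k}$-factor, and such an outer automorphism acts trivially on $A_{SO_{2n_k}}(u_k)$ when the latter is preserved). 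Thus every $A_G(u)$ is an elementary abelian $2$-group.

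Third, with $A_G(u)$ abelian and of exponent two, I would mimic the explicit case analysis carried out for type $B_n$ in the proof of Theorem \ref{thm:1.6}. For each standard Levi $G_P$ meeting $\mc C$, one compares $A_{G_P}(u_P)$ as a subgroup of $A_G(u)$ using the Bala--Carter descriptions for $SO_{2n_i}(\C)$ and the way $\Gamma$ acts; if for some $P \subsetneq \Delta_{\vec n}$ one has $A_{G_P}(u_P) \cong A_G(u)$, then $\overline{R_\Z}(A_G(u)) = 0$, and otherwise the computation \eqref{eq:1.36} goes through unchanged and identifies $\overline{R_\Z}(A_G(u))$ with the representation ring $R_\Z(A)$ of a $2$-subgroup $A \subset A_G(u)$, which is free abelian. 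The main obstacle is purely combinatorial bookkeeping: ``very even'' partitions produce pairs of $SO_{2m}(\C)$-classes that are fused by the diagram automorphism, and one must track carefully for which $\gamma \in \Gamma$ the $G^\circ$-class of $u$ is stabilized and how that enlarges $A_G(u)$ compared to $A_{G^\circ}(u)$. Once this is sorted out for the classification in \cite[\S 13.1]{Car2}, the argument is a mechanical translation of the type $B,C,D$ cases of Theorem \ref{thm:1.6}.
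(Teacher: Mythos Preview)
Your overall strategy---reducing to torsion-freeness of each $\overline{R_\Z}(A_G(u))$ via the injection analogous to \eqref{eq:1.30}---is exactly what the paper does. The difference is in how the component groups are analysed. You try to access $A_G(u)$ through the short exact sequence $1 \to A_{G^\circ}(u) \to A_G(u) \to \Gamma_u \to 1$ and then invoke the type-$D$ case of Theorem~\ref{thm:1.6} factor by factor; the paper instead observes that $G = G^\circ \rtimes \Gamma$ is precisely $S\big(\prod_{i} O_{2n_i}(\C)\big)$, and more generally that each $G_{\vec\alpha}$ is a product of general linear groups with $S\big(\prod_i O_{2(n_i-|\alpha^{(i)}|)}(\C)\big)$. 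With this identification the centralizer of a unipotent element and its component group can be read off directly from the standard orthogonal-group formulas in \cite[\S 13.1]{Car2}, and the calculation becomes a word-for-word repetition of the type-$B$ argument \eqref{eq:1.40}--\eqref{eq:1.36} with $SO_{2m+1}$ replaced by $O_{2m}$. This bypasses entirely the ``combinatorial bookkeeping'' about very even partitions that you flag as the main obstacle: the fusion of $SO_{2m}$-classes under the diagram automorphism is absorbed into passing from $SO$ to $O$.

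Your short exact sequence argument also has a small gap: even granting that $\Gamma_u$ acts trivially on $A_{G^\circ}(u)$ (and note each generator $\epsilon^{(k)}\epsilon^{(k+1)}$ touches two factors, not one), a central extension of elementary abelian $2$-groups need not be elementary abelian---it could contain a $\Z/4\Z$. The paper's orthogonal-group identification sidesteps this, since $A_G(u)$ is visibly a subquotient of $\pi_0\big(Z_{\prod O_{2n_i}}(u)\big)$, which is elementary abelian $2$.
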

\begin{proof}
We will follow the proof of Theorem \ref{thm:1.6}, with
the group $G^\circ$ from \eqref{eq:1.44}. Every Levi subgroup of $G^\circ$ can be described
by a $d$-tuple of partitions $\vec{\alpha} = \big( \alpha^{(1)},\ldots,\alpha^{(d)} \big)$.
The standard Levi subgroup associated to $\vec{\alpha}$ is
\[
G^\circ_{\vec{\alpha}} = \prod_{i=1}^d SO_{2 n_i}(\C)_{\alpha^{(i)}} =
\prod_{i=1}^d GL_{\alpha^{(i)}_1}(\C) \times \cdots \times GL_{\alpha^{(i)}_{d_i}}(\C)
\times SO_{2 (n_i - |\alpha^{(i)}|)}(\C) .
\]
(We note that sometimes several $P \subset \Delta$ are associated to one $\vec{\alpha}$,
as already for $SO_{2n}(\C)$.) We mimic \eqref{eq:1.41} by putting 
\[
\begin{aligned}
G_{\vec{\alpha}} & = G^\circ_{\vec{\alpha}} \rtimes \big\langle \epsilon^{(i)} \epsilon^{(j)} : 
|\alpha^{(i)}| < n_i \text{ and } |\alpha^{(j)}| < n_j \big\rangle \\
 & = \Big( \prod_{i=1}^d GL_{\alpha^{(i)}_1}(\C) \times \cdots \times GL_{\alpha^{(i)}_{d_i}}(\C)
\Big) \times S \Big( \prod_{i=1}^d O_{2 (n_i - |\alpha^{(i)}|)}(\C) \Big) .
\end{aligned}
\]
Then $W(G_{\vec{\alpha}},T) \cong W'_P$ for $P \subset \Delta$ corresponding to $\vec{\alpha}$.

The Bala--Carter classification says that the unipotent classes in 
$G^\circ$ can be pa\-ra\-me\-tri\-zed by $d$-tuples of bipartitions
$(\vec{\alpha} ,\vec{\beta})$ such that $2 |\alpha^{(i)}| + |\beta^{(i)}| = 2 n_i$, 
$\beta^{(i)}$ has only odd parts and all parts of $\beta^{(i)}$ are distinct. 
A typical $u$ in this conjugacy class is distinguished in the standard Levi subgroup 
$G^\circ_{\vec{\alpha}}$.

Like in \eqref{eq:1.45} and \eqref{eq:1.40}, let $G_{{\vec \alpha}''}$ be a standard Levi 
subgroup containing $u$. Then $u = u'' u'$ with $u'$ in a product of groups $GL_{n_k}(\C)$ and 
\[
u' \in S \big( \prod\nolimits_{i=1}^d O_{2 (n_i - |\alpha^{''(i)}|)}(\C) \big) =: H .
\]
The $GL$-factors and $u''$ do not contribute to $A_{G_{{\vec \alpha}''}}(u)$.

In the upcoming calculations we omit the case that $\vec{\beta}$ is empty, that case is
a bit different but can be handled in the same way.

With \cite[\S 13.1]{Car} we find that the quotient of $Z_H (u')$ by its unipotent radical is
\[
\prod_{i = 1}^d \prod_{j \text{ even}} Sp_{2 m^{'(i)}_j}(\C) \times \prod_{i = 1}^d 
\prod_{j \text{ odd}, \text{ not in } \beta^{(i)}} O_{2m^{'(i)}_j}(\C)
\times S \Big( \prod_{i = 1}^d \prod_{j \text{ odd}, \text{ in } \beta^{(i)}} 
O_{2m^{'(i)}_j + 1}(\C) \Big) .
\]
The component groups become
\[
A_{G_{{\vec \alpha}''}}(u) \cong A_H (u') \cong \Big( \prod_{i = 1}^d 
\prod_{j \text{ odd}, \text{ in } \alpha^{'(i)}, \text{ not in } \beta^{(i)}} \Z / 2 \Z \Big) 
\times S \Big( \prod_{i = 1}^d \prod_{j \text{ odd}, \text{ in } \beta^{(i)}} \Z / 2 \Z \Big) .
\]
In the same way as after \eqref{eq:1.31} we see that $\overline{R_\Z}(A_G (u)) = 0$
unless each $\alpha^{(i)}$ has only distinct odd terms, none of them appearing in $\beta^{(i)}$. 
For such $(\vec{\alpha},\vec{\beta})$ the maximal reductive quotient of $Z_G (u)$ simplifies to
\begin{equation}\label{eq:1.48}
\Big( \prod_{i = 1}^d \prod_{j \text{ odd}, \text{ in } \alpha^{(i)}} O_2 (\C) \Big)
\times S \Big( \prod_{i = 1}^d \prod_{j \text{ odd}, \text{ in } \beta^{(i)}} O_1 (\C) \Big) 
\end{equation}
and the component group becomes
\[
A_G (u) = \prod_{i = 1}^d \prod_{j \text{ odd}, \text{ in } \alpha^{(i)}} 
\Z / 2 \Z \times A \quad \text{with} \quad 
A = S \Big(  \prod_{i = 1}^d \prod_{j \text{ odd}, \text{ in } \beta^{(i)}} \Z / 2 \Z \Big) .
\]
Just as in \eqref{eq:1.36} we can calculate that $\overline{R_\Z}(A_G (u)) \cong R_\Z (A)$.
\end{proof}

With Lemmas \ref{lem:1.9} and \ref{lem:1.10} at hand the proof of Proposition 
\ref{prop:1.8} also becomes valid for $W'_{\vec n}$. Let us formulate
this somewhat more generally. Let $W'$ be a finite group which is a direct product of
a Weyl group and a number of groups of the form $W'_{\vec n}$. Let $G'$ be the corresponding
direct product of the groups called $G$ in \eqref{eq:1.47} and \eqref{eq:1.41}. We denote
the basis of the root system $R'$ underlying $W'$ by $\Delta'$, and the standard parabolic
subgroup associated to $P \subset \Delta$ by $W'_P$.

\begin{lem}\label{lem:1.11}
For every $w \in \mc C_P (W')$ there exists a pair $(u_{P,w},\rho'_{P,w})$ such that:
\begin{itemize}
\item $u_{P,w}$ is quasidistinguished unipotent in $G'_P$,
\item $\rho'_{P,w} \in \Irr (A_{G'_P}(u_{P,w}))$ is geometric,
\item the set
\[
\big\{ \ind_{W'_P}^{W'_{\vec n}} \big( H_P (u_{P,w},\rho'_{P,w}) \big) : 
P \in \mc P (\Delta_{\vec n}) / W'_{\vec n} , w \in \mc C_{P,\mr{ell}}(W'_{\vec n}) \big\} 
\]
forms a $\Z$-basis of $R_\Z (W')$.
\end{itemize}
\end{lem}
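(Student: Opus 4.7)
The plan is to imitate the proof of Proposition \ref{prop:1.8}, working with $W'_{\vec n}$ in place of $W$ and with the reductive (possibly disconnected) group $G$ of \eqref{eq:1.41} in place of the connected complex group used there. Since $\overline{R_\Z}$, $R_\Z$, parabolic induction and the Springer correspondence all factor over direct product decompositions, and the statement is already known for Weyl-group factors by Proposition \ref{prop:1.8}, it is enough to prove the lemma when $W' = W'_{\vec n}$ is a single such almost Weyl group.

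First I would establish the analogue of Proposition \ref{prop:1.8}(a). All of the ingredients used in \cite{Ree} carry over: ellipticity makes sense via Lemma \ref{lem:1.9} (isotropy groups on $\mr{Lie}(T)$ are exactly the parabolic subgroups of $W'_{\vec n}$), the analogue of Reeder's \cite[Proposition 2.2.2]{Ree} identifies $\overline{R_\C}(W'_{\vec n})$ with the space of class functions on $W'_{\vec n}$ supported on elliptic elements (so $\dim_\C \overline{R_\C}(W'_{\vec n}) = |\mc C_{\Delta_{\vec n}}(W'_{\vec n})|$), and by Lemma \ref{lem:1.10} the integral version $\overline{R_\Z}(W'_{\vec n})$ is torsion-free of the same rank. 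Using the extended Springer correspondence of \cite{Kat,ABPS1} together with the extension of Borho--MacPherson quoted just before Lemma \ref{lem:1.10}, the $H(u,\rho')$ with $(u,\rho')$ running over $G$-conjugacy classes of pairs form a $\Z$-basis of $R_\Z(W'_{\vec n})$; and the decomposition \eqref{eq:1.26}, the isomorphism \eqref{eq:1.29} and the injection \eqref{eq:1.30} all go through with $G$ as in \eqref{eq:1.41} and $A_G(u)$ in the disconnected sense. Reeder's \cite[Proposition 3.4.1]{Ree} then guarantees that only quasidistinguished unipotent elements contribute to $\overline{R_\Z}(W'_{\vec n})$, so we can choose a set of representatives $(u_w,\rho'_w)$ indexed by $\mc C_{\Delta_{\vec n}}(W'_{\vec n})$ whose classes $[H(u_w,\rho'_w)]$ form a $\Z$-basis of $\overline{R_\Z}(W'_{\vec n})$.

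Next I would induct on $|\Delta_{\vec n}|$ exactly as in part (b) of Proposition \ref{prop:1.8}. The base case is trivial. For the induction step, note that by \eqref{eq:1.42} every proper standard parabolic $W'_P \subsetneq W'_{\vec n}$ is itself a direct product of a genuine Weyl group and a factor $W'_{{\vec n}_P}$, so the induction hypothesis (combined with Proposition \ref{prop:1.8} for the Weyl part) produces for every $\alpha \in \Delta_{\vec n}$ a $\Z$-basis of $R_\Z(W'_{\Delta_{\vec n} \setminus \{\alpha\}})$ of the required form. By choosing the assignments $w \mapsto (u_{P,w},\rho'_{P,w})$ compatibly across $W'_{\vec n}$-associated pairs $(P,w)$ (via any set-theoretic lift of $W'_{\vec n}(P,Q) \to \Hom(W'_P,W'_Q)$), the resulting set of induced representations spans $\sum_{P \subsetneq \Delta_{\vec n}} \ind_{W'_P}^{W'_{\vec n}} R_\Z(W'_P)$ and has the right cardinality to be a $\Z$-basis there. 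Combining this with part (a) and the defining short exact sequence of $\overline{R_\Z}(W'_{\vec n})$ yields the full $\Z$-basis of $R_\Z(W'_{\vec n})$.

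The main obstacle is purely bookkeeping: one must check that each of the results of \cite{Ree} (especially Propositions 2.2.2, 3.4.1 and Lemma 3.3.1 there) genuinely survives the passage from a connected reductive group to the disconnected $G = G^\circ \rtimes \Gamma$ of \eqref{eq:1.41}, with $A_G(u)$ taken in its disconnected sense and Borho--MacPherson applied as in \cite[Lemma 4.5]{ABPS1}. Once these extensions are in hand (they follow by the same arguments as in \cite{Ree}, replacing the Springer module by its $\Gamma$-twisted analogue $H^*(\mc B^u;\C) \otimes \C[\Gamma]$), the combinatorial induction runs unchanged and delivers the lemma.
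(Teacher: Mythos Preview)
Your proposal is correct and follows essentially the same route as the paper: reduce to the single factor $W'_{\vec n}$, then rerun the proof of Proposition~\ref{prop:1.8} with Lemma~\ref{lem:1.10} in place of Theorem~\ref{thm:1.6}. The one point where the paper is more explicit than you is the quasidistinguished claim: rather than appealing to a disconnected version of \cite[Proposition~3.4.1]{Ree} as bookkeeping, the paper uses the concrete description of the maximal reductive quotient of $Z_G(u)$ (namely \eqref{eq:1.48}, obtained in the proof of Lemma~\ref{lem:1.10}) to write down a semisimple $t \in Z_G(u)^\circ$ such that $tu$ lies in no proper Levi subgroup of $G^\circ$, thereby verifying directly that every $u$ with $\overline{R_\Z}(A_G(u)) \neq 0$ is quasidistinguished in $G$.
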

\begin{proof}
Let $(W'_i )_i$ be the indecomposable factors of $W'$, with root systems $R'_i$.
For every $P \subset \Delta'$:
\[
W'_P = \prod\nolimits_i W'_{P \cap R'_i} \quad \text{and} \quad 
R_\Z (W'_P) = \bigotimes\nolimits_i R_\Z (W'_{P \cap R'_i}) .
\]
Thus we reduce to the case of a single $W'_i$. If $W'_i$ is an irreducible Weyl group, then 
Proposition \ref{prop:1.8} applies immediately, so we may assume that $W'_i = W'_{\vec n}$.

Let $u \in G$ be unipotent and assume that $\overline{R_\Z}(A_G (u)) \neq 0$. From the 
proof of Lemma \ref{lem:1.10} we see that a maximal reductive subgroup of $Z_G (u)$
is of the form \eqref{eq:1.48}. For each $(i,j)$ with $j$ in $\alpha^{(i)}$ we pick an
element $t_{i,j} \in SO_2 (\C) \setminus \{\pm \matje{1}{0}{0}{1} \}$, all different. 
This gives a semisimple element
\[
t := \prod_{i=1}^d \big( \prod_{j \text{ in } \alpha^{(i)}} t_{i,j} \times 
\prod_{j \text{ in } \beta^{(i)}} 1 \big) \in Z_G (u)^\circ .
\]
Furthermore $t$ does not lie in any proper Levi subgroup of $G^\circ$
containing $u$, so $t u$ does lie in any proper Levi subgroup of $G^\circ$. Thus $u$
is quasidistinguished in $G$. 

Knowing this and Lemma \ref{lem:1.10}, the proof of Proposition \ref{prop:1.8} goes through.
\end{proof}

\end{document}